\newtheorem{definition}{Definition}[section]
\newtheorem{theorem}[definition]{Theorem}
\newtheorem{lemma}[definition]{Lemma}
\newtheorem{corollary}[definition]{Corollary}
\newtheorem{remark}[definition]{Remark}
\newtheorem{example}[definition]{Example}
\newtheorem{proposition}[definition]{Proposition}
\begin{document}
\title{\bf 
The algebra $U^+_q$ and its alternating \\central extension $\mathcal U^+_q$}
 \author{
Paul Terwilliger 
}
\date{}

\maketitle
\begin{abstract} Let $U^+_q$ denote the positive part of the quantized  enveloping algebra 
$U_q(\widehat{\mathfrak{sl}}_2)$.
The algebra $U^+_q$ has a presentation involving two generators $W_0$, $W_1$ and two relations, called the
$q$-Serre relations. In 1993 I. Damiani obtained a PBW basis for $U^+_q$, consisting of some elements
$\lbrace E_{n \delta+ \alpha_0} \rbrace_{n=0}^\infty$,
$\lbrace E_{n \delta+ \alpha_1} \rbrace_{n=0}^\infty$,
$\lbrace E_{n \delta} \rbrace_{n=1}^\infty$.
In 2019 we introduced the alternating central extension $\mathcal U^+_q$ of $U^+_q$. We defined $\mathcal U^+_q$ by generators and relations.
The generators, said to be alternating, are denoted
$\lbrace \mathcal W_{-k}\rbrace_{k=0}^\infty$, $\lbrace \mathcal  W_{k+1}\rbrace_{k=0}^\infty$, 
$ \lbrace \mathcal G_{k+1}\rbrace_{k=0}^\infty$, 
$\lbrace \mathcal {\tilde G}_{k+1}\rbrace_{k=0}^\infty$. Let $\langle \mathcal W_0, \mathcal W_1 \rangle$ denote
the subalgebra of $\mathcal U^+_q$ generated by $\mathcal W_0$, $\mathcal W_1$.
It is known that there exists an algebra isomorphism
$U^+_q\to \langle \mathcal W_0, \mathcal W_1 \rangle$ 
 that sends $W_0 \mapsto \mathcal W_0$ and $W_1 \mapsto \mathcal W_1$.
Via this isomorphism we identify $U^+_q$ with $\langle \mathcal W_0, \mathcal W_1 \rangle$.
In our main result, we express the Damiani PBW basis elements in terms of the alternating generators.
We give the answer in terms of generating functions.
\bigskip

\noindent
{\bf Keywords}.  Alternating central extension; PBW basis; $q$-Serre relations.
\hfil\break
\noindent {\bf 2020 Mathematics Subject Classification}. 
Primary: 17B37. Secondary: 05E14, 81R50.

 \end{abstract}
 
 \section{Introduction}
 \noindent We will be discussing the positive part $U^+_q$ of the quantized enveloping algebra $U_q(\widehat{\mathfrak{sl}}_2)$.
 The algebra $U^+_q$ is associative an infinite-dimensional. 
It has a presentation involving
two generators $W_0$, $W_1$ and two relations, called the $q$-Serre relations:
 \begin{align*}
\lbrack W_0, \lbrack W_0, \lbrack W_0, W_1\rbrack_q \rbrack_{q^{-1}} \rbrack =0, \qquad \qquad 
\lbrack W_1, \lbrack W_1, \lbrack W_1, W_0\rbrack_q \rbrack_{q^{-1}}\rbrack = 0.
\end{align*}
In \cite{damiani} I. Damiani 
  obtained a Poincar\'e-Birkhoff-Witt (or PBW)
basis for $U^+_q$. The PBW basis elements are denoted
\begin{align}
\lbrace E_{n \delta+ \alpha_0} \rbrace_{n=0}^\infty,
\qquad \quad 
\lbrace E_{n \delta+ \alpha_1} \rbrace_{n=0}^\infty,
\qquad \quad 
\lbrace E_{n \delta} \rbrace_{n=1}^\infty.
\label{eq:UpbwIntro}
\end{align}
 We will be discussing the generating functions
 \begin{align*}
&E^-(t) = \sum_{n=0}^\infty  E_{n\delta+\alpha_0} t^n, \qquad \quad
E^+(t) = \sum_{n=0}^\infty  E_{n\delta+\alpha_1} t^n,
\\
&E(t) = \sum_{n=0}^\infty E_{n\delta} t^n,       \qquad \qquad E_{0\delta} = -(q-q^{-1})^{-1}.
\end{align*}
  In \cite{alternating}
we introduced a type of element in $U^+_q$, said to be alternating. 
By
\cite[Lemma 5.11]{alternating},
each alternating element commutes with exactly one of
$W_0$, $W_1$, $\lbrack W_1, W_0\rbrack_q$, $\lbrack W_0, W_1\rbrack_q$. This gives four types of
alternating elements,
denoted
\begin{align*}
\lbrace  W_{-k}\rbrace_{k\in \mathbb N}, \quad
\lbrace  W_{k+1}\rbrace_{k\in \mathbb N}, \quad
\lbrace  G_{k+1}\rbrace_{k\in \mathbb N}, \quad
\lbrace {\tilde G}_{k+1}\rbrace_{k\in \mathbb N}.
\end{align*}
By 
\cite[Lemma 5.11]{alternating} the alternating elements of
each type mutually commute.
\medskip

\noindent 
We obtained the alternating elements in the following way.
Consider the free algebra $\mathbb V$ on two generators $x,y$.
The standard (linear) basis for
$\mathbb V$ consists of the words in $x,y$.
In
\cite{rosso1, rosso} M. Rosso introduced
an algebra structure on $\mathbb V$, called a
$q$-shuffle algebra.
For $u,v\in \lbrace x,y\rbrace$ their
$q$-shuffle product is
$u\star v = uv+q^{\langle u,v\rangle }vu$, where
$\langle u,v\rangle =2$
(resp. $\langle u,v\rangle =-2$)
if $u=v$ (resp.
 $u\not=v$).
Rosso gave an injective algebra homomorphism $\natural$
from $U^+_q$ into the $q$-shuffle algebra
${\mathbb V}$, that sends $W_0\mapsto x$ and $W_1\mapsto y$.
By
\cite[Definition~5.2]{alternating}
the map $\natural$ sends
\begin{align*}
&W_0 \mapsto x, \qquad W_{-1} \mapsto xyx, \qquad W_{-2} \mapsto xyxyx, \qquad \ldots
\\
&W_1 \mapsto y, \qquad W_{2} \mapsto yxy, \qquad W_{3} \mapsto yxyxy, \qquad \ldots
\\
&G_{1} \mapsto yx, \qquad G_{2} \mapsto yxyx,  \qquad G_3 \mapsto yxyxyx, \qquad \ldots
\\
&\tilde G_{1} \mapsto xy, \qquad \tilde G_{2} \mapsto 
xyxy,\qquad \tilde G_3 \mapsto xyxyxy, \qquad \ldots
\end{align*}
In \cite{alternating} we used $\natural$
to obtain many relations involving the 
 alternating elements; the main relations are listed in Definition \ref{def:Aq} below 
 and  \cite[Proposition~8.1]{alternating}. In \cite[Section~11]{alternating} we described how the
 alternating elements are related to the elements \eqref{eq:UpbwIntro}.
 \medskip

\noindent 
 In \cite{altCE}
we defined
an algebra  $\mathcal U^+_q$  by 
generators and relations in the following way. The generators, said to be alternating, are denoted
\begin{align*}
\lbrace \mathcal W_{-k}\rbrace_{k \in \mathbb N}, \quad 
\lbrace  \mathcal W_{k+1}\rbrace_{k \in \mathbb N}, \quad
\lbrace  \mathcal G_{k+1}\rbrace_{k\in \mathbb N}, \quad
\lbrace \mathcal {\tilde G}_{k+1}\rbrace_{k \in \mathbb N}.
\end{align*}
The relations are the ones in Definition \ref{def:Aq}.
By construction there 
 exists a surjective algebra homomorphism
$ \mathcal U^+_q \to U^+_q$ that sends
\begin{align*}
\mathcal W_{-k} \mapsto W_{-k},\quad \qquad
\mathcal W_{k+1} \mapsto W_{k+1},\quad \qquad
\mathcal G_{k} \mapsto G_{k},\quad \qquad
\mathcal {\tilde G}_{k} \mapsto \tilde G_{k}
\end{align*}
for $k \in \mathbb N$.
In a moment, we will see that this map is not injective.
Denote the ground field by $\mathbb F$ and
let $\lbrace z_n\rbrace_{n=1}^\infty$
denote mutually commuting indeterminates.
Let
$\mathbb F\lbrack z_1, z_2,\ldots\rbrack$
denote the algebra consisting of the polynomials in
$z_1, z_2,\ldots $ that have all coefficients in 
$\mathbb F$. 
For notational convenience define $z_0=1$.
In \cite[Lemma~3.6, Theorem~5.17]{altCE} we displayed an algebra isomorphism
$\varphi:
\mathcal U^+_q \to U^+_q \otimes 
\mathbb F \lbrack z_1, z_2,\ldots\rbrack$ that sends
\begin{align*}
\mathcal W_{-n} &\mapsto \sum_{k=0}^n W_{k-n} \otimes z_k,
\quad \qquad \qquad 
\mathcal W_{n+1} \mapsto \sum_{k=0}^n W_{n+1-k} \otimes z_k,
\\
\mathcal G_{n} &\mapsto \sum_{k=0}^n G_{n-k} \otimes z_k,
\quad \qquad \qquad
\mathcal {\tilde G}_{n} \mapsto \sum_{k=0}^n \tilde G_{n-k} \otimes z_k
\end{align*}
for $n \in \mathbb N$. In particular, $\varphi$ sends
$\mathcal W_0 \mapsto W_0 \otimes 1$ and
$\mathcal W_1 \mapsto W_1 \otimes 1$. Following \cite{altCE} we call $\mathcal U^+_q$
the alternating central extension of $U^+_q$.
\medskip

\noindent
In \cite{altCE} we obtained the following results about the center $\mathcal Z$  of $\mathcal U^+_q$. By \cite[Lemma~5.10]{altCE} the map $\varphi$ sends $\mathcal Z \mapsto 1 \otimes \mathbb F\lbrack z_1, z_2,\ldots\rbrack$.
For $n\geq 1$ define
\begin{align*}
 \mathcal Z^\vee_n= \sum_{k=0}^n \mathcal G_k \mathcal {\tilde G}_{n-k} q^{n-2k}
-  q \sum_{k=0}^{n-1} \mathcal W_{-k} \mathcal W_{n-k} q^{n-1-2k}.
\end{align*}
For notational convenience define $\mathcal Z^\vee_0=1$.
By \cite[Definition~5.5, Proposition~6.2]{altCE} the subalgebra $\mathcal Z$
is generated by $\lbrace \mathcal Z^\vee_n\rbrace_{n=1}^\infty$.
By \cite[Lemma~5.4]{altCE}, for $n\in \mathbb N$ the map $\varphi$ sends $\mathcal Z^\vee_n \mapsto 1 \otimes z^\vee_n$
where $z^\vee_n = \sum_{k=0}^n z_k z_{n-k} q^{n-2k}$.
By \cite[Corollary~6.3]{altCE} the elements
$\lbrace \mathcal Z^\vee_n\rbrace_{n=1}^\infty$ are algebraically
independent.
\medskip

\noindent 
Let $\langle \mathcal W_0, \mathcal W_1\rangle$
denote the subalgebra of 
 $\mathcal U^+_q$  generated by
$\mathcal W_0, \mathcal W_1$.
By \cite[Proposition~6.4]{altCE} there exists an algebra isomorphism $U^+_q \to  \langle \mathcal W_0, \mathcal W_1\rangle$ that sends
$W_0 \mapsto \mathcal W_0$ and $W_1 \mapsto \mathcal W_1$.
 By \cite[Proposition~6.5]{altCE}  the multiplication map
\begin{align*}
\langle \mathcal W_0,\mathcal W_1\rangle \otimes \mathcal Z &\to
	       \mathcal U^+_q 
	       \\
w \otimes z &\mapsto      wz            
\end{align*}
is an algebra isomorphism.
By \cite[Theorem~10.2]{altCE} the alternating generators in order
\begin{align}
\lbrace \mathcal W_{-k} \rbrace_{k \in \mathbb N}, \qquad 
\lbrace \mathcal G_{k+1} \rbrace_{k\in \mathbb N}, \qquad  
\lbrace \mathcal {\tilde G}_{k+1} \rbrace_{k\in \mathbb N}, \qquad  
\lbrace \mathcal W_{k+1} \rbrace_{k\in \mathbb N}
\label{eq:PBW4}
\end{align}
give a PBW basis for $\mathcal U^+_q$. 
\medskip

\noindent  We now summarize the main results of the present paper. 
For the rest of this section, we identify the algebra $U^+_q$ with $\langle \mathcal W_0, \mathcal W_1 \rangle$ via the isomorphism mentioned above.
Our goal is to elegantly express the elements  \eqref{eq:UpbwIntro} in terms of the alternating generators for $\mathcal U^+_q$.
To accomplish the goal, we first adjust the PBW basis  \eqref{eq:PBW4} by modifying the ordering as follows.
We show that the alternating generators in order
\begin{align}
\label{eq:GWWG}
\lbrace \mathcal G_{k+1} \rbrace_{k\in \mathbb N}, \qquad  
\lbrace \mathcal W_{-k} \rbrace_{k \in \mathbb N}, \qquad 
\lbrace \mathcal W_{k+1} \rbrace_{k\in \mathbb N}, \qquad 
\lbrace \mathcal {\tilde G}_{k+1} \rbrace_{k\in \mathbb N}
\end{align}
give a PBW basis for $\mathcal U^+_q$. This PBW basis induces a basis for $\mathcal U^+_q$, in which we will express the elements \eqref{eq:UpbwIntro}.
We give our answer in terms of generating functions. Define
  \begin{align*}
&\mathcal W^-(t) = \sum_{n=0}^\infty \mathcal W_{-n} t^n,
\qquad \qquad  
\mathcal W^+(t) = \sum_{n=0}^\infty \mathcal W_{n+1} t^n,
\\
&\mathcal G(t) = \sum_{n=0}^\infty \mathcal G_n t^n,
\qquad \quad 
\mathcal {\tilde G}(t) = \sum_{n=0}^\infty \mathcal {\tilde G}_n t^n, \qquad \quad \mathcal G_0 = \mathcal {\tilde G}_0 = 1.
\end{align*}
Further define
$\mathcal Z^\vee(t) = \sum_{n\in \mathbb N} \mathcal Z^\vee_n t^n$. By construction
\begin{align*}
\mathcal Z^\vee(t) &= 
 \mathcal G(q^{-1}t) \mathcal{\tilde G}(qt) -qt
\mathcal W^{-} (q^{-1}t)\mathcal W^+(qt).
\end{align*}
We obtain the factorization
\begin{align*}
\mathcal Z^\vee(t)  =-(q-q^{-1}) \mathcal {\tilde G}(q^{-1}t) E(\xi t) \mathcal {\tilde G}(qt),
\end{align*}
\noindent where 
$\xi = -q^2 (q-q^{-1})^{-2}$. Using this factorization we obtain
\begin{align*}
E^-(t) &= \mathcal W^-(q^{-1} \xi^{-1} t) \bigl( \mathcal {\tilde G}(q^{-1} \xi^{-1} t)\bigr)^{-1}, 
 \\
E^+(t) &=    \mathcal W^+(q \xi^{-1} t) \bigl( \mathcal {\tilde G}(q \xi^{-1} t)\bigr)^{-1}, 
\\
E(t) &= -\,\frac{\mathcal Z^\vee(\xi^{-1}t) \bigl( \mathcal {\tilde G}(q^{-1} \xi^{-1} t)\bigr)^{-1} \bigl( \mathcal {\tilde G}(q \xi^{-1} t)\bigr)^{-1}}{q-q^{-1}}.
 \end{align*}
 The above three equations effectively express the elements  \eqref{eq:UpbwIntro} in the basis for $\mathcal U^+_q$ induced by the PBW basis \eqref{eq:GWWG}.
 Using the above three equations and the relations in Definition \ref{def:Aq}, we recover the previously known relations between $E^\pm (t)$, $E(t)$.
\medskip

\noindent The paper is organized as follows. Section 2 contains some preliminaries. In Section 3 we recall the definition and basic facts about $U^+_q$.
In Section 4 we recall the PBW basis for $U^+_q$ due to Damiani, and give the corresponding reduction rules.
In Section 5 we express these reduction rules in terms of the generating functions $E^\pm(t)$, $E(t)$.
In Section 6 we recall the definition and basic facts about $\mathcal U^+_q$. In Section 7 we express
the defining relations for $\mathcal U^+_q$ in terms of the generating functions $\mathcal W^\pm (t)$, $\mathcal G(t)$, $\mathcal {\tilde G}(t)$.
 In Section 8 we obtain a PBW basis for $\mathcal U^+_q$,
and give the corresponding reduction rules. In Section 9 we describe the center of $\mathcal U^+_q$ and recall the generating function $\mathcal Z^\vee(t)$.
In Section 10 we compare the generating functions  $E^\pm(t)$, $E(t)$ with the generating functions $\mathcal W^\pm (t)$, $\mathcal G(t)$, $\mathcal {\tilde G}(t)$.
In Section 11 we obtain a factorization of  $\mathcal Z^\vee(t)$.
In Section 12 we express  $E^\pm(t)$, $E(t)$ in terms of $\mathcal W^\pm (t)$, $\mathcal G(t)$, $\mathcal {\tilde G}(t)$.
In Appendix A we recall an earlier PBW basis for $\mathcal U^+_q$ and give the corresponding reduction rules.

 \section{Preliminaries}
 \noindent  We now begin our formal argument. Throughout the paper, the following notational conventions are in effect.
Recall the natural numbers $\mathbb N= \lbrace 0,1,2,\ldots \rbrace$ and integers $\mathbb Z=\lbrace 0,\pm 1, \pm 2,\ldots \rbrace$. Let $\mathbb F$ denote a field.
Every vector space and tensor product discussed in this paper is over $\mathbb F$.
Every algebra discussed in this paper is associative, over $\mathbb F$, and has a multiplicative identity. A subalgebra has the same multiplicative identity as the parent algebra.
 Let $\mathcal A$ denote an algebra. By an {\it automorphism} of $\mathcal A$
we mean an algebra isomorphism $\mathcal A\rightarrow \mathcal A$. The algebra $\mathcal A^{\rm opp}$ consists of the vector space $\mathcal A$ and the multiplication map $\mathcal A \times \mathcal A \rightarrow \mathcal A$, $(a,b)\to ba$.
By an {\it antiautomorphism} of $\mathcal A$ we mean an algebra isomorphism $\mathcal A \rightarrow \mathcal A^{\rm opp}$.
\medskip

\noindent We will be discussing generating functions. Let $\mathcal A$ denote an algebra and let $t$ denote an indeterminate.
For a sequence $\lbrace a_n \rbrace_{n \in \mathbb N}$ of elements in $\mathcal A$, the corresponding generating function is
\begin{align*}
 a(t) = \sum_{n \in \mathbb N} a_n t^n.
 \end{align*} 
 The above sum is formal; issues of convergence are not considered.
 We call $a(t)$ the {\it generating function over $\mathcal A$ with coefficients $\lbrace a_n \rbrace_{n \in \mathbb N}$}.
  For generating functions
 $a(t)=\sum_{n \in \mathbb N} a_n t^n$ and
 $b(t) = \sum_{n \in \mathbb N} b_n t^n$ over $\mathcal A$, their product $a(t)b(t)$ is the generating function $\sum_{n \in \mathbb N}c_n t^n$  such that 
 $c_n = \sum_{i=0}^n a_i b_{n-i}$ for $n\in \mathbb N$.
 The set of generating functions over $\mathcal A$ forms an algebra.
 The following result is readily checked.
 \begin{lemma} \label{lem:inv} Let $\mathcal A $ denote an algebra. A generating function $a(t)=\sum_{n \in \mathbb N} a_n t^n$ over $\mathcal A$ is invertible if and only if $a_0$ is invertible in $\mathcal A$. In this case
 $(a(t))^{-1}=\sum_{n \in \mathbb N}b_n t^n$ where
 $ b_0 = a^{-1}_0$ and for $n\geq 1$,
 \begin{align*}
 b_n = - a_0^{-1}\sum_{k=1}^n a_k b_{n-k}.
 \end{align*}
 \end{lemma}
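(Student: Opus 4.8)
The plan is to work inside the algebra $R$ of generating functions over $\mathcal A$, whose multiplicative identity is the generating function with coefficient $1$ in degree $0$ and coefficient $0$ in every higher degree, and whose product is the Cauchy product $c_n=\sum_{i=0}^n a_i b_{n-i}$ recalled above. I would first dispatch the easy implication. Suppose $a(t)$ is invertible in $R$, with inverse $b(t)=\sum_n b_n t^n$. Reading off the degree-$0$ coefficient in each of $a(t)b(t)=1$ and $b(t)a(t)=1$ gives $a_0 b_0=1$ and $b_0 a_0=1$, so $a_0$ is invertible in $\mathcal A$ with $a_0^{-1}=b_0$. This settles the forward direction and already identifies $b_0$.

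For the converse, assume $a_0$ is invertible. The main step is to produce a genuinely two-sided inverse, and the cleanest route sidesteps the noncommutativity trap of conflating left and right inverses. I would factor $a(t)=a_0\,(1+m(t))$, where $m(t)=\sum_{n\geq 1}(a_0^{-1}a_n)\,t^n$ has zero constant term; one checks at once that $a_0(1+m(t))=a_0+\sum_{n\geq 1}a_n t^n=a(t)$. Because $m(t)$ has order at least $1$, the power $m(t)^j$ has order at least $j$, so the formal geometric series $c(t)=\sum_{j\geq 0}(-1)^j m(t)^j$ is a well-defined generating function: only the finitely many terms $j=0,1,\ldots,n$ contribute to the coefficient of $t^n$. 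A telescoping computation shows $(1+m(t))\,c(t)=1=c(t)\,(1+m(t))$ --- valid despite noncommutativity, since all the factors are powers of the single element $m(t)$ --- so $1+m(t)$ is invertible. Hence $a(t)=a_0(1+m(t))$ is invertible, with two-sided inverse $c(t)\,a_0^{-1}$.

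Finally I would extract the stated recursion. Having shown $a(t)$ is invertible, write its inverse as $b(t)=\sum_n b_n t^n$ and equate coefficients in $a(t)b(t)=1$. The degree-$0$ term gives $a_0 b_0=1$, i.e.\ $b_0=a_0^{-1}$, consistent with the forward direction. For $n\geq 1$ the degree-$n$ term gives $\sum_{k=0}^n a_k b_{n-k}=0$, that is $a_0 b_n=-\sum_{k=1}^n a_k b_{n-k}$; multiplying on the left by $a_0^{-1}$ yields $b_n=-a_0^{-1}\sum_{k=1}^n a_k b_{n-k}$, as claimed.

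The only genuine subtlety --- and the step I would be most careful with --- is the noncommutativity of $\mathcal A$: one must keep $a_0^{-1}$ on the correct (left) side throughout, and must confirm that the geometric series argument really delivers a two-sided inverse rather than merely a one-sided one. The factorization $a(t)=a_0(1+m(t))$, together with the observation that powers of a single element commute with that element, is exactly what makes this transparent; everything else is a routine matching of coefficients.
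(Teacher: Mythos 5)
Your proof is correct. The paper offers no argument for this lemma (it is dismissed as ``readily checked''), so there is nothing to compare against; your write-up is a complete and careful verification of the presumably intended routine argument, and the factorization $a(t)=a_0(1+m(t))$ followed by the formal geometric series is a clean way to guarantee a genuinely two-sided inverse before extracting the recursion from $a(t)b(t)=1$, which is exactly the point that needs care in a noncommutative algebra.
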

 
 \begin{example}
\label{ex:GvsV}
\rm Referring to Lemma \ref{lem:inv}, assume that $a_0=1$. Then
\begin{align*}
&b_0=1, \qquad \qquad \qquad
b_1 = -a_1, \qquad \qquad \qquad
b_2 = a^2_1 -a_2,
\\
&b_3 = 2 a_1 a_2 -a^3_1 - a_3, \qquad \qquad
b_4 = a^4_1 +2a_1a_3+a^2_2-3a^2_1a_2-a_4.
\end{align*}
\end{example}

 \begin{definition}\label{def:pbw}
 \rm 
(See \cite[p.~299]{damiani}.)
Let $ \mathcal A$ denote an algebra. A {\it Poincar\'e-Birkhoff-Witt} (or {\it PBW}) basis for $\mathcal A$
consists of a subset $\Omega \subseteq \mathcal A$ and a linear order $<$ on $\Omega$
such that the following is a basis for the vector space $\mathcal A$:
\begin{align}
\label{eq:basis}
a_1 a_2 \cdots a_n \qquad n \in \mathbb N, \qquad a_1, a_2, \ldots, a_n \in \Omega, \qquad
a_1 \leq a_2 \leq \cdots \leq a_n.
\end{align}
We interpret the empty product as the multiplicative identity in $\mathcal A$.
\end{definition}

\begin{definition}\rm \label{lem:rr} We refer to the PBW basis $\Omega$, $<$ from Definition \ref{def:pbw}. For any ordered pair $a,b$ of elements in $\Omega$ such that $a>b$, the corresponding
{\it reduction rule} is the equation that expresses the product $ab$ as a linear combination of the basis elements from \eqref{eq:basis}. The reduction rule is called {\it trivial} whenever $a$, $b$ commute.
\end{definition}

\begin{definition}\label{def:poly} \rm
Let $\lbrace z_n \rbrace_{n=1}^\infty$ denote mutually commuting indeterminates. Let $\mathbb F \lbrack z_1, z_2, \ldots \rbrack$ denote
the algebra consisting of the polynomials in $z_1, z_2, \ldots $ that have all coefficients in $\mathbb F$.
For notational convenience, define $z_0=1$.
\end{definition}

  \noindent Throughout the paper, we fix a nonzero $q \in \mathbb F$
that is not a root of unity.
Recall the notation
\begin{align*}
\lbrack n\rbrack_q = \frac{q^n-q^{-n}}{q-q^{-1}}
\qquad \qquad n \in \mathbb N.
\end{align*}

\section{The algebra $U^+_q$}
In this section we recall the algebra $ U^+_q$.
\medskip

\noindent For elements $X, Y$ in any algebra, define their
commutator and $q$-commutator by 
\begin{align*}
\lbrack X, Y \rbrack = XY-YX, \qquad \qquad
\lbrack X, Y \rbrack_q = q XY- q^{-1}YX.
\end{align*}
\noindent Note that 
\begin{align*}
\lbrack X, \lbrack X, \lbrack X, Y\rbrack_q \rbrack_{q^{-1}} \rbrack
= 
X^3Y-\lbrack 3\rbrack_q X^2YX+ 
\lbrack 3\rbrack_q XYX^2 -YX^3.
\end{align*}

\begin{definition} \label{def:U} \rm
(See \cite[Corollary~3.2.6]{lusztig}.) 
Define the algebra $U^+_q$ by generators $W_0$, $W_1$ and relations
\begin{align}
\label{eq:qOns1}
&\lbrack W_0, \lbrack W_0, \lbrack W_0, W_1\rbrack_q \rbrack_{q^{-1}} \rbrack =0,
\\
\label{eq:qOns2}
&\lbrack W_1, \lbrack W_1, \lbrack W_1, W_0\rbrack_q \rbrack_{q^{-1}}\rbrack = 0.
\end{align}
We call $U^+_q$ the {\it positive part of $U_q(\widehat{\mathfrak{sl}}_2)$}.
The relations \eqref{eq:qOns1}, \eqref{eq:qOns2}  are called the {\it $q$-Serre relations}.
\end{definition}

\noindent We mention some symmetries of $U^+_q$. 

\begin{lemma}
\label{lem:aut} There exists an automorphism $\sigma$ of $U^+_q$ that sends $W_0 \leftrightarrow W_1$.
Moreover $\sigma^2 = {\rm id}$, where ${\rm id}$ denotes the identity map.
\end{lemma}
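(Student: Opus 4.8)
The plan is to exhibit the automorphism $\sigma$ directly on generators and then check that it is well-defined by verifying that it respects the defining relations. Concretely, I would define $\sigma$ on the free algebra on $W_0, W_1$ by the assignment $W_0 \mapsto W_1$ and $W_1 \mapsto W_0$, extended multiplicatively. Since $U^+_q$ is presented by generators and relations (Definition \ref{def:U}), to produce a genuine algebra homomorphism $U^+_q \to U^+_q$ it suffices to confirm that the images of the two $q$-Serre relations \eqref{eq:qOns1}, \eqref{eq:qOns2} again hold in $U^+_q$.

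The key observation is that the two $q$-Serre relations are interchanged by the swap $W_0 \leftrightarrow W_1$. That is, applying the substitution $W_0 \mapsto W_1$, $W_1 \mapsto W_0$ to the left-hand side of \eqref{eq:qOns1} produces exactly the left-hand side of \eqref{eq:qOns2}, and vice versa. This is immediate from the symmetric form of the two relations: relation \eqref{eq:qOns2} is obtained from \eqref{eq:qOns1} simply by swapping the roles of $W_0$ and $W_1$. Hence the set of defining relations is stable under the swap, so $\sigma$ descends to a well-defined algebra endomorphism of $U^+_q$. I would then observe that applying the swap twice returns each generator to itself, which shows $\sigma^2 = \mathrm{id}$ on generators and therefore on all of $U^+_q$. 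In particular $\sigma$ is its own inverse, so it is a bijection, hence an automorphism.

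The only genuinely careful point is the well-definedness check, but here it is essentially trivial because of the built-in symmetry of the presentation. There is no real obstacle: the main step is simply noticing that the two relations form a single orbit under the generator swap. I expect the entire argument to reduce to this symmetry remark, followed by the involution computation $\sigma^2 = \mathrm{id}$, from which bijectivity and the automorphism property follow formally.
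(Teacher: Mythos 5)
Your argument is correct and is exactly the standard one the paper implicitly relies on: the lemma is stated without proof precisely because the two $q$-Serre relations \eqref{eq:qOns1}, \eqref{eq:qOns2} are interchanged by the swap $W_0 \leftrightarrow W_1$, so the swap descends to an endomorphism that is visibly an involution, hence an automorphism. Nothing is missing.
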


\begin{lemma}\label{lem:antiaut} {\rm (See \cite[Lemma~2.2]{catalan}.)}
There exists an antiautomorphism $\dagger$ of $U^+_q$ that fixes each of $W_0$, $W_1$.
 Moreover $\dagger^2={\rm id}$.
\end{lemma}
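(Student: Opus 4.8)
The plan is to realize $\dagger$ as the descent to $U^+_q$ of the word-reversal antiautomorphism of the free algebra. Let $\mathcal F = \mathbb F\langle x_0, x_1\rangle$ denote the free algebra on two generators, and let $R\colon \mathcal F \to \mathcal F$ denote the unique antiautomorphism fixing $x_0$ and $x_1$; concretely $R$ reverses each word, sending $x_{i_1} x_{i_2} \cdots x_{i_n} \mapsto x_{i_n} \cdots x_{i_2} x_{i_1}$. Writing $U^+_q = \mathcal F / I$, where $I$ is the two-sided ideal generated by the two $q$-Serre relators $r_0$, $r_1$ obtained from \eqref{eq:qOns1}, \eqref{eq:qOns2}, it suffices to prove $R(I)=I$. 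Once this is established, $R$ induces an antiautomorphism $\dagger$ of the quotient $U^+_q$, and $\dagger$ fixes $W_0$, $W_1$ because $R$ fixes $x_0$, $x_1$.

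First I would reduce $R(I)=I$ to a statement about the two relators. Since $R$ is an antiautomorphism, for $a, b \in \mathcal F$ we have $R(a\, r_i\, b) = R(b)\, R(r_i)\, R(a)$, so $R(I)$ is the two-sided ideal generated by $R(r_0)$ and $R(r_1)$. Thus it is enough to check $R(r_0), R(r_1) \in I$, and as $R$ is an involution this gives $R(I)=I$.

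The key computation is to apply $R$ to the expansion recorded just before Definition \ref{def:U}, namely
\[
r_0 = W_0^3 W_1 - \lbrack 3\rbrack_q W_0^2 W_1 W_0 + \lbrack 3\rbrack_q W_0 W_1 W_0^2 - W_1 W_0^3.
\]
Reversing each monomial interchanges the first and last terms and interchanges the two middle terms, which yields $R(r_0) = -r_0$. The relator $r_1$ is obtained from $r_0$ by the swap $W_0 \leftrightarrow W_1$, and the identical bookkeeping gives $R(r_1) = -r_1$. Hence $R(r_0), R(r_1) \in I$, which completes the verification that $R(I)=I$ and produces the antiautomorphism $\dagger$.

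Finally, $\dagger^2$ is the composite of two antiautomorphisms, hence an automorphism of $U^+_q$; it fixes the generators $W_0$, $W_1$, which generate $U^+_q$, so $\dagger^2 = \mathrm{id}$. I do not expect a serious obstacle here: the only substantive point is the sign identity $R(r_0) = -r_0$, and since $-r_0 \in I$ trivially, membership is immediate. The single thing to get right is the routine coefficient bookkeeping showing that the $q$-Serre relators are sent to scalar multiples of themselves under reversal of words.
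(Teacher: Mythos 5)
Your proof is correct and is the standard argument behind the cited result: the paper itself offers no proof beyond the reference to \cite[Lemma~2.2]{catalan}, and the verification there rests on exactly the observation you make, that word reversal sends each $q$-Serre relator to $-1$ times itself (visible from the displayed expansion of $\lbrack X,\lbrack X,\lbrack X,Y\rbrack_q\rbrack_{q^{-1}}\rbrack$), so it preserves the defining ideal and descends to an involutive antiautomorphism fixing $W_0$, $W_1$.
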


\begin{lemma} The maps $\sigma$, $\dagger$ commute.
\end{lemma}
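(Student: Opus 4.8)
The plan is to exploit the fact that both $\sigma$ and $\dagger$ are completely determined by their action on the generators $W_0, W_1$, thereby reducing the claim $\sigma\dagger = \dagger\sigma$ to a short computation on these two elements.

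First I would record the type of each composite. Since $\sigma$ is an algebra automorphism (Lemma \ref{lem:aut}) and $\dagger$ is an antiautomorphism (Lemma \ref{lem:antiaut}), a routine check shows that both $\sigma\dagger$ and $\dagger\sigma$ are antiautomorphisms of $U^+_q$. For instance, for $a,b \in U^+_q$ one computes $(\sigma\dagger)(ab) = \sigma(\dagger(b)\dagger(a)) = \sigma(\dagger(b))\,\sigma(\dagger(a)) = (\sigma\dagger)(b)\,(\sigma\dagger)(a)$, and similarly $(\dagger\sigma)(ab) = (\dagger\sigma)(b)\,(\dagger\sigma)(a)$. This bookkeeping about when a composite reverses multiplication is the only place where care is needed, and it is what makes the generator-level comparison below conclusive.

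Next I would invoke the principle that an antiautomorphism of $U^+_q$ is uniquely determined by its values on $W_0$ and $W_1$. Indeed, $U^+_q$ is spanned by the words in $W_0, W_1$, and an antiautomorphism sends such a word to the product, in reverse order, of the images of its letters; hence two antiautomorphisms agreeing on $W_0, W_1$ agree on every word, and therefore everywhere by linearity.

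It then remains to compare $\sigma\dagger$ and $\dagger\sigma$ on the generators. Using $\dagger(W_0)=W_0$, $\dagger(W_1)=W_1$ together with $\sigma(W_0)=W_1$, $\sigma(W_1)=W_0$, I compute
\[
(\sigma\dagger)(W_0) = \sigma(W_0) = W_1 = \dagger(W_1) = (\dagger\sigma)(W_0),
\]
\[
(\sigma\dagger)(W_1) = \sigma(W_1) = W_0 = \dagger(W_0) = (\dagger\sigma)(W_1).
\]
Since $\sigma\dagger$ and $\dagger\sigma$ are antiautomorphisms that agree on the generating set $\lbrace W_0, W_1 \rbrace$, the determinacy principle of the previous paragraph yields $\sigma\dagger = \dagger\sigma$, as desired. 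I anticipate no genuine obstacle beyond verifying that the two composites are of the same type (both antiautomorphisms); once that is in hand, the agreement on generators settles the matter immediately.
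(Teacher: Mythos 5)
Your proof is correct and fills in exactly the routine verification that the paper dismisses with ``This is readily checked'': both composites are antiautomorphisms, they agree on the generators $W_0$, $W_1$, and an antiautomorphism of $U^+_q$ is determined by its values on a generating set. No further comment is needed.
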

\begin{proof} This is readily checked.
\end{proof}
 
\begin{definition}\label{def:tauA} \rm Let $\tau$ denote the composition of $\sigma$ and $\dagger$. Note that $\tau$ is an antiautomorphism of $U^+_q$ that sends
$W_0 \leftrightarrow W_1$. We have $\tau^2 = {\rm id}$.
\end{definition}

\section{A PBW basis for $U^+_q$}

\noindent In \cite{damiani}, Damiani obtained a PBW basis for $U^+_q$ that involves some elements
\begin{align}
\lbrace E_{n \delta+ \alpha_0} \rbrace_{n=0}^\infty,
\qquad \quad 
\lbrace E_{n \delta+ \alpha_1} \rbrace_{n=0}^\infty,
\qquad \quad 
\lbrace E_{n \delta} \rbrace_{n=1}^\infty.
\label{eq:Upbw}
\end{align}
These elements are recursively defined  as follows.  
\begin{align}
E_{\alpha_0} = W_0, \qquad \qquad
E_{\alpha_1} = W_1, \qquad \qquad
E_{\delta} = q^{-2}W_1W_0-W_0W_1,
\label{eq:BAalt}
\end{align}
and for $n\geq 1$,
\begin{align}
&
E_{n \delta+\alpha_0} =
\frac{
\lbrack E_\delta, E_{(n-1)\delta+ \alpha_0} \rbrack
}
{q+q^{-1}},
\qquad \qquad
E_{n \delta+\alpha_1} =
 \frac{
 \lbrack
 E_{(n-1)\delta+ \alpha_1},
 E_\delta
 \rbrack
 }
 {q+q^{-1}},
 \label{eq:dam1introalt}
 \\
 &
 \qquad \qquad
 E_{n \delta} =
 q^{-2}  E_{(n-1)\delta+\alpha_1} W_0
 - W_0 E_{(n-1)\delta+\alpha_1}.
 \label{eq:dam2introalt}
\end{align}


\begin{proposition}
    \label{prop:PBWbasis}
    {\rm (See \cite[p.~308]{damiani}.)}
A PBW basis for $U^+_q$ is obtained by the elements
{\rm \eqref{eq:Upbw}}
  in the 
     linear
    order
 \begin{align*}
 E_{\alpha_0} < E_{\delta+\alpha_0} <
  E_{2\delta+\alpha_0}
  < \cdots
  <
   E_{\delta} < E_{2\delta}
    < E_{3\delta}
 < \cdots
  <
   E_{2\delta + \alpha_1} <
     E_{\delta + \alpha_1} < E_{\alpha_1}.
     \end{align*}
   \end{proposition}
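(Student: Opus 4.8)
The plan is to establish the two defining properties of a PBW basis separately: that the ordered monomials in the elements \eqref{eq:Upbw} span $U^+_q$, and that they are linearly independent. The unifying tool is the $\mathbb N^2$-grading of $U^+_q$ in which $W_0$ has degree $(1,0)$ and $W_1$ has degree $(0,1)$. From the recursions \eqref{eq:BAalt}, \eqref{eq:dam1introalt}, \eqref{eq:dam2introalt} one checks that $E_{n\delta+\alpha_0}$, $E_{n\delta+\alpha_1}$, $E_{n\delta}$ are homogeneous of degrees $(n+1,n)$, $(n,n+1)$, $(n,n)$, so every ordered monomial is homogeneous and both properties may be verified one graded component at a time, where everything is finite dimensional.

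For spanning I would extract from the recursions a family of straightening relations, one for each out-of-order adjacent pair of PBW generators. Relative to the order of Proposition \ref{prop:PBWbasis} the sorted monomials carry the $\alpha_0$-type vectors first, then the imaginary vectors, then the $\alpha_1$-type vectors, so the relations to be produced are: Levendorskii--Soibelman-type relations within each real family $\{E_{n\delta+\alpha_0}\}$ and $\{E_{n\delta+\alpha_1}\}$; the mutual commutativity $[E_{m\delta},E_{n\delta}]=0$ of the imaginary vectors; the mixed real--imaginary relations governing $E_{m\delta}E_{n\delta+\alpha_0}$ and $E_{m\delta+\alpha_1}E_{n\delta}$; and the crossing relation rewriting $E_{m\delta+\alpha_1}E_{n\delta+\alpha_0}$ in terms of the three families. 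Each such relation expresses an out-of-order product as a combination of monomials that are strictly smaller in a fixed well-founded order refining the grading by the number of order-inversions; granting this, repeated application terminates and shows the ordered monomials span.

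For linear independence I would pass to Rosso's $q$-shuffle algebra $\mathbb V$ through the injective homomorphism $\natural$. On generators one computes $\natural(E_{\alpha_0})=x$, $\natural(E_{\alpha_1})=y$, and $\natural(E_\delta)=(q^{-4}-1)\,xy$; an induction on $n$ using \eqref{eq:dam1introalt}, \eqref{eq:dam2introalt} then shows that under a suitable term order on words each $\natural(E_\beta)$ equals a nonzero scalar times a distinguished word plus strictly lower terms, these distinguished words being $x(yx)^n$, $(yx)^n y$ and $(xy)^n$ for the three families (matching the $q$-shuffle images of the alternating elements recalled in the introduction). Since the distinguished word of a $\star$-product is the concatenation of the distinguished words of its factors with a nonzero power of $q$ as coefficient, an ordered PBW monomial has for its leading word the concatenation of the corresponding blocks; checking that this concatenation parses uniquely back into its blocks shows that distinct ordered monomials have distinct leading words, whence their $\natural$-images, and so the monomials themselves, are linearly independent.

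The main obstacle is the first input to the spanning step, namely the derivation of the straightening relations, and above all the claim that the imaginary root vectors commute and that the mixed and crossing products straighten as stated. These do not follow formally from \eqref{eq:dam1introalt}, \eqref{eq:dam2introalt}; they require a delicate simultaneous induction on $n$ that repeatedly feeds on the $q$-Serre relations \eqref{eq:qOns1}, \eqref{eq:qOns2}, and this is the technical core of Damiani's original argument. A secondary point needing care is the unique-parsing claim in the independence step, which must be checked against the precise block shapes $x(yx)^n$, $(yx)^n y$, $(xy)^n$ and the prescribed ordering of the families; alternatively one may sidestep the combinatorics by matching, in each degree, the number of ordered monomials against the graded dimension of $U^+_q$ obtained from the classical limit $q\to 1$.
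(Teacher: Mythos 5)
First, be aware that the paper offers no proof of this proposition: it is quoted from Damiani \cite{damiani}, and the straightening relations your spanning step requires are likewise only quoted (Lemmas \ref{lem:dam2}, \ref{lem:com3}, \ref{lem:com2}). Measured against Damiani's actual argument, your proposal is a plan rather than a proof, and you concede the decisive point yourself: the commutativity of the $E_{n\delta}$, the mixed relations, and the crossing relation do not follow formally from the recursions \eqref{eq:dam1introalt}, \eqref{eq:dam2introalt} but require a simultaneous induction driven by the $q$-Serre relations. That induction \emph{is} the theorem; deferring it leaves the main gap unfilled. Even granting those relations, your termination claim needs more care: the right-hand sides in Lemmas \ref{lem:com3} and \ref{lem:com2} produce monomials with the same number of factors and the same bidegree, so ``number of order-inversions'' does not obviously decrease and one needs a finer well-founded order (Damiani's Levendorskii--Soibelman-type argument supplies one).

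Second, the linear-independence argument via leading words in the $q$-shuffle image has a concrete failure at the imaginary family. Your computation $\natural(E_\delta)=(q^{-4}-1)xy$ is correct, but if the distinguished word of $E_{n\delta}$ is $(xy)^n$, then the two sorted monomials $E_{2\delta}$ and $E_\delta E_\delta$, both legitimate PBW monomials of bidegree $(2,2)$, have the same leading word $xyxy$; in general $(xy)^{j_1}(xy)^{j_2}\cdots=(xy)^{j_1+j_2+\cdots}$, so the leading word of a product of imaginary root vectors remembers only the total degree and distinct ordered monomials do \emph{not} have distinct leading words. This is not a parsing subtlety that extra care repairs; the leading-term method must be abandoned or substantially modified for the imaginary block. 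Your fallback --- counting sorted monomials in each bidegree against the graded dimension of $U^+_q$ obtained from the classical limit, so that spanning forces independence --- is sound in principle and is much closer to how the independence is actually established, but as written it is an aside rather than a carried-out step. In short: correct skeleton and correct grading, but the two load-bearing components (the straightening relations and the independence argument) are respectively deferred and broken.
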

\noindent 
 We mention a variation on the formula  \eqref{eq:dam2introalt}.
By \cite[p.~307]{damiani} the following
holds  for $n\geq 1$:
\begin{equation}
\label{eq:Bdel2}
E_{n \delta} = 
q^{-2} W_1 E_{(n-1)\delta+\alpha_0} 
-  E_{(n-1)\delta+\alpha_0}  W_1. 
\end{equation}
\noindent 

\noindent 
Recall the antiautomorphism $\tau$ of $U^+_q$, from Definition \ref{def:tauA}.
\begin{lemma} \label{lem:asym2}  The antiautomorphism $\tau$  sends
$E_{n\delta+\alpha_0}\leftrightarrow E_{n\delta+\alpha_1}$ for $n \in \mathbb N$, and fixes $E_{n\delta}$ for $n\geq 1$.
\end{lemma}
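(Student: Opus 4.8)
The plan is to verify the claim directly on the recursive definitions of the Damiani elements, using the fact that an antiautomorphism reverses products and hence reverses commutators with a sign. First I would record the key algebraic fact: for any antiautomorphism $\tau$ and elements $X,Y$ we have $\tau(\lbrack X,Y\rbrack)=\tau(XY-YX)=\tau(Y)\tau(X)-\tau(X)\tau(Y)=-\lbrack \tau(X),\tau(Y)\rbrack=\lbrack \tau(Y),\tau(X)\rbrack$. Since $\tau$ sends $W_0\leftrightarrow W_1$ (Definition \ref{def:tauA}), I would begin with the base cases in \eqref{eq:BAalt}: clearly $\tau(E_{\alpha_0})=\tau(W_0)=W_1=E_{\alpha_1}$ and symmetrically $\tau(E_{\alpha_1})=E_{\alpha_0}$, which handles the $n=0$ instance of the first assertion.

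Next I would treat $E_\delta$, since it is the engine driving the recursions \eqref{eq:dam1introalt}. Applying $\tau$ to $E_\delta=q^{-2}W_1W_0-W_0W_1$ and using that $\tau$ reverses products while swapping $W_0,W_1$ gives $\tau(E_\delta)=q^{-2}\tau(W_0)\tau(W_1)-\tau(W_1)\tau(W_0)=q^{-2}W_1W_0-W_0W_1=E_\delta$, so $E_\delta$ is fixed; this establishes the $n=1$ case of the second assertion. With these base cases in hand, the two claims follow together by induction on $n$. Assuming $\tau$ fixes $E_{(n-1)\delta}$ (for $n\ge 2$) and swaps $E_{(n-1)\delta+\alpha_0}\leftrightarrow E_{(n-1)\delta+\alpha_1}$, I would apply $\tau$ to the first recursion in \eqref{eq:dam1introalt}: using the commutator-reversal identity,
\begin{align*}
\tau(E_{n\delta+\alpha_0})=\frac{\tau(\lbrack E_\delta,E_{(n-1)\delta+\alpha_0}\rbrack)}{q+q^{-1}}
=\frac{\lbrack \tau(E_{(n-1)\delta+\alpha_0}),\tau(E_\delta)\rbrack}{q+q^{-1}}
=\frac{\lbrack E_{(n-1)\delta+\alpha_1},E_\delta\rbrack}{q+q^{-1}}
=E_{n\delta+\alpha_1},
\end{align*}
and the symmetric computation on the second recursion gives $\tau(E_{n\delta+\alpha_1})=E_{n\delta+\alpha_0}$. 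This advances the first assertion to index $n$.

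It remains to propagate the fixity of $E_{n\delta}$. Here I would use whichever of the two formulas \eqref{eq:dam2introalt} or \eqref{eq:Bdel2} meshes cleanly with $\tau$; the point is that $\tau$ interchanges $W_0$ with $W_1$ and $E_{(n-1)\delta+\alpha_1}$ with $E_{(n-1)\delta+\alpha_0}$, so applying $\tau$ to \eqref{eq:dam2introalt} and reversing products should transform it into \eqref{eq:Bdel2} (or vice versa), yielding $\tau(E_{n\delta})=E_{n\delta}$. Concretely, from $E_{n\delta}=q^{-2}E_{(n-1)\delta+\alpha_1}W_0-W_0E_{(n-1)\delta+\alpha_1}$ one computes $\tau(E_{n\delta})=q^{-2}W_1E_{(n-1)\delta+\alpha_0}-E_{(n-1)\delta+\alpha_0}W_1$, which is exactly the right-hand side of \eqref{eq:Bdel2}. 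The only genuine subtlety, and the step I would check most carefully, is this matching of the two formulas for $E_{n\delta}$ under $\tau$: the argument for the $E_{n\delta}$ case relies on the nontrivial identity \eqref{eq:Bdel2} rather than on a naive application of $\tau$ to a single defining relation, so the crux is to confirm that the image under $\tau$ of one expression for $E_{n\delta}$ is literally the other established expression, thereby closing the induction.
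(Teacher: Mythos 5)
Your proof is correct and follows essentially the same route as the paper's: the paper's own (very terse) proof says to compare the two relations in \eqref{eq:dam1introalt} for the swapping assertion and to compare \eqref{eq:dam2introalt} with \eqref{eq:Bdel2} for the fixing assertion, which is exactly the induction you carry out in detail. You also correctly identify the one nontrivial ingredient, namely that the fixity of $E_{n\delta}$ rests on the identity \eqref{eq:Bdel2} rather than on the defining relation alone.
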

\begin{proof} To verify the first assertion, compare the two relations in  \eqref{eq:dam1introalt}. To verify the second assertion, compare  \eqref{eq:dam2introalt} and
\eqref{eq:Bdel2}.
\end{proof}

\noindent  For the PBW basis in Proposition \ref{prop:PBWbasis}, the corresponding reduction rules were obtained by Damiani \cite[Section~4]{damiani}. These reduction rules are repeated below using adjusted notation.
\medskip

\noindent By \cite[p.~307]{damiani} the elements $\lbrace E_{n\delta}\rbrace_{n=1}^\infty$ mutually
commute.
\begin{lemma}
\label{lem:dam2}
{\rm (See \cite[p.~307]{damiani}.)}
For $i,j \in \mathbb N$ the following holds in $U_q^+$:
\begin{align*}
E_{i\delta+\alpha_1}
E_{j\delta+\alpha_0}
=
q^2
E_{j\delta+\alpha_0}
E_{i\delta+\alpha_1}
+
q^2 E_{(i+j+1)\delta}.
\end{align*}
\end{lemma}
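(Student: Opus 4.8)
The plan is to prove the identity by induction on $n=i+j$, after reformulating it in terms of the ``defect''
\[
D(i,j)=E_{i\delta+\alpha_1}E_{j\delta+\alpha_0}-q^2E_{j\delta+\alpha_0}E_{i\delta+\alpha_1},
\]
the claim being that $D(i,j)=q^2E_{(i+j+1)\delta}$. Two facts read directly off the definitions will serve as scaffolding. From \eqref{eq:dam1introalt} I record the commutator identities $[E_\delta,E_{j\delta+\alpha_0}]=(q+q^{-1})E_{(j+1)\delta+\alpha_0}$ and $[E_{i\delta+\alpha_1},E_\delta]=(q+q^{-1})E_{(i+1)\delta+\alpha_1}$. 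From \eqref{eq:dam2introalt}, taking $n=i+1$ and using $E_{\alpha_0}=W_0$ from \eqref{eq:BAalt}, I get after multiplying by $q^2$ the \emph{anchor relation} $D(i,0)=q^2E_{(i+1)\delta}$, valid for all $i\geq 0$ with no induction whatsoever; in particular this gives the base case $D(0,0)=q^2E_\delta$.

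The heart of the argument is a recursion relating defects. Starting from $(q+q^{-1})E_{(j+1)\delta+\alpha_0}=[E_\delta,E_{j\delta+\alpha_0}]$, I expand $(q+q^{-1})D(i,j+1)$ and use $E_{i\delta+\alpha_1}E_\delta=E_\delta E_{i\delta+\alpha_1}+(q+q^{-1})E_{(i+1)\delta+\alpha_1}$ to slide $E_\delta$ across $E_{i\delta+\alpha_1}$. Collecting terms so that the pairs $E_\delta E_{i\delta+\alpha_1}E_{j\delta+\alpha_0}-q^2E_\delta E_{j\delta+\alpha_0}E_{i\delta+\alpha_1}$ and their mirror image assemble into $E_\delta D(i,j)-D(i,j)E_\delta$, while the remaining terms form a copy of $(q+q^{-1})D(i+1,j)$, I expect to arrive at the clean identity
\[
(q+q^{-1})D(i,j+1)=[E_\delta,D(i,j)]+(q+q^{-1})D(i+1,j),
\]
which holds unconditionally. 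Executing this expansion — tracking which occurrence of $E_\delta$ sits on which side and confirming that everything outside the three stated groupings cancels — is the step I expect to be the main obstacle: it is routine algebra, but the bookkeeping must be exact.

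With the recursion established, the induction closes rapidly. Assume $D(i',j')=q^2E_{(n+1)\delta}$ for every $i'+j'=n$. Since $E_\delta$ lies in the mutually commuting family $\lbrace E_{m\delta}\rbrace_{m\geq 1}$ (by \cite[p.~307]{damiani}), the bracket $[E_\delta,D(i,j)]=q^2[E_\delta,E_{(n+1)\delta}]$ vanishes for every $(i,j)$ at level $n$. The recursion then degenerates to $D(i,j+1)=D(i+1,j)$, and letting $(i,j)$ run through the compositions $(0,n),(1,n-1),\dots,(n,0)$ chains the defects at level $n+1$ into a single equality $D(0,n+1)=D(1,n)=\cdots=D(n+1,0)$. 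The anchor relation evaluates the last term as $D(n+1,0)=q^2E_{(n+2)\delta}$, so every defect at level $n+1$ equals $q^2E_{(i+j+1)\delta}$, completing the induction. I would close by noting that the whole statement is the coefficientwise form of the generating-function identity $(s-t)\bigl(E^+(s)E^-(t)-q^2E^-(t)E^+(s)\bigr)=q^2\bigl(E(s)-E(t)\bigr)$, which is the shape best suited to the later sections.
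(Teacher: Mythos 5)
Your proof is correct, and it is worth noting that the paper itself offers no argument for this lemma: it is simply cited to Damiani (p.~307), as are the surrounding reduction rules. I verified your three ingredients independently. The anchor $D(i,0)=q^2E_{(i+1)\delta}$ is indeed an immediate rearrangement of \eqref{eq:dam2introalt} with $n=i+1$ and $E_{\alpha_0}=W_0$, needing no induction. The recursion
\begin{align*}
(q+q^{-1})D(i,j+1)=[E_\delta,D(i,j)]+(q+q^{-1})D(i+1,j)
\end{align*}
does come out exactly as you predict: substituting $(q+q^{-1})E_{(j+1)\delta+\alpha_0}=[E_\delta,E_{j\delta+\alpha_0}]$ produces four terms, and commuting $E_\delta$ past $E_{i\delta+\alpha_1}$ in the first and fourth via $[E_{i\delta+\alpha_1},E_\delta]=(q+q^{-1})E_{(i+1)\delta+\alpha_1}$ leaves precisely $E_\delta D(i,j)-D(i,j)E_\delta$ plus $(q+q^{-1})D(i+1,j)$ with nothing left over. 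The inductive step then collapses the level-$(n+1)$ defects into a single chain terminating at the anchor, as you say. The one dependency to flag is that killing the bracket $[E_\delta,D(i,j)]$ requires the mutual commutativity of $\lbrace E_{m\delta}\rbrace_{m\ge 1}$, which is itself a nontrivial theorem of Damiani; within the logical structure of this paper that fact is stated (and cited) immediately before the lemma, so your use of it is legitimate here, but your proof is not a from-scratch replacement for Damiani's — in her development the commutativity of the imaginary root vectors and this relation are established together, so one would need to check for circularity before exporting your argument outside this paper. Your closing remark correctly identifies the generating-function form of the lemma as (a relabeling of) Proposition \ref{prop:New}.
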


\begin{lemma}
\label{lem:com3}
{\rm (See \cite[p.~300]{damiani}.)}
For  $i> j\geq 0$ the following hold in $U^+_q$.
\begin{enumerate}
\item[\rm (i)] Assume that $i-j=2r+1$ is odd. Then
\begin{align*}
&
E_{i\delta+\alpha_0}
E_{j\delta+\alpha_0}
=
q^{-2}
E_{j\delta+\alpha_0}
E_{i\delta+\alpha_0}
-
(q^2-q^{-2})\sum_{\ell=1}^{r}
q^{-2\ell}
E_{(j+\ell) \delta+\alpha_0}
E_{(i-\ell) \delta+\alpha_0},
\\
&
E_{j\delta+\alpha_1}
E_{i\delta+\alpha_1} =
q^{-2}
E_{i\delta+\alpha_1 }
E_{j\delta+\alpha_1 }
-
(q^2-q^{-2})\sum_{\ell=1}^{r}
q^{-2\ell}
E_{(i-\ell) \delta+\alpha_1}
E_{(j+\ell) \delta+\alpha_1}.
\end{align*}
\item[\rm (ii)] Assume that $i-j=2r$ is even. Then
\begin{align*}
E_{i\delta+\alpha_0}
E_{j\delta+\alpha_0}
 =
q^{-2}
E_{j\delta+\alpha_0}
&E_{i\delta+\alpha_0}
-
q^{j-i+1} (q-q^{-1}) E^2_{(r+j)\delta+\alpha_0}
\\
&-\;
(q^2-q^{-2})\sum_{\ell=1}^{r-1}
q^{-2\ell}
E_{(j+\ell) \delta+\alpha_0}
E_{(i-\ell) \delta+\alpha_0},
\\
E_{j\delta+\alpha_1}
E_{i\delta+\alpha_1} =
q^{-2}
E_{i\delta+\alpha_1 }
&
E_{j\delta+\alpha_1 }
-
q^{j-i+1} (q-q^{-1}) E^2_{(r+j)\delta+\alpha_1}
\\
&-\;
(q^2-q^{-2})\sum_{\ell=1}^{r-1}
q^{-2\ell}
E_{(i-\ell) \delta+\alpha_1}
E_{(j+\ell) \delta+\alpha_1}.
\end{align*}
\end{enumerate}
\end{lemma}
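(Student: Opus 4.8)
My plan is to prove only the first displayed relation (the $\alpha_0$ one) in each of (i) and (ii); the second (the $\alpha_1$ relation) then follows for free by applying the antiautomorphism $\tau$ from Definition \ref{def:tauA}. Indeed $\tau$ reverses products and, by Lemma \ref{lem:asym2}, sends $E_{n\delta+\alpha_0}\mapsto E_{n\delta+\alpha_1}$, so applying $\tau$ to the $\alpha_0$ relation and reindexing the sum reproduces the $\alpha_1$ relation verbatim. For the rest I abbreviate $A_n:=E_{n\delta+\alpha_0}$ and set $L_{a,b}:=A_aA_b-q^{-2}A_bA_a$, so that for $i>j$ the target reads $L_{i,j}=\rho_{i,j}$, where $\rho_{i,j}$ collects the correction terms on the right-hand side of (i)/(ii) (everything beyond the leading $q^{-2}A_jA_i$).

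The engine is the map $D:=(q+q^{-1})^{-1}\mathrm{ad}_{E_\delta}$, a derivation of $U^+_q$ which by \eqref{eq:dam1introalt} satisfies $D(A_n)=A_{n+1}$ for all $n\in\mathbb N$. Applying the Leibniz rule together with $D(A_n)=A_{n+1}$ to $L_{i,j}$ yields the identity
\begin{equation*}
L_{i+1,j}=D(L_{i,j})-L_{i,j+1}, \tag{$\diamond$}
\end{equation*}
valid for all $i,j\in\mathbb N$. I would prove the reduction rules by induction on the larger index $i$, taking as the base case the \emph{difference-one} relations $L_{n+1,n}=0$ for all $n$ (treated separately below). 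Since $(\diamond)$ at $i=j$ collapses to $L_{i+1,i}=L_{i+1,i}$, it cannot move within the difference-one locus, which is exactly why those relations must be supplied by hand. For a pair $(i+1,j)$ with $(i+1)-j\ge 2$, identity $(\diamond)$ expresses $L_{i+1,j}$ through $L_{i,j}$ and $L_{i,j+1}$, both of smaller first index and hence known by induction (the latter being the diagonal $L_{i,i}=(1-q^{-2})A_i^2$ when $j+1=i$); thus $L_{i+1,j}=D(\rho_{i,j})-\rho_{i,j+1}$.

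It then remains to check the purely combinatorial identity $\rho_{i+1,j}=D(\rho_{i,j})-\rho_{i,j+1}$, expanding $D(A_aA_b)=A_{a+1}A_b+A_aA_{b+1}$. This is routine but requires care with the parity of $i-j$ and with the even-case square term $E^2_{(r+j)\delta+\alpha_0}$, whose coefficient is precisely the one forced by the collision $j+\ell=i-\ell$; moreover, in the transition from even to odd difference the expansion produces one out-of-order product, which must first be normalized using a difference-one relation before matching coefficients of the (linearly independent) ordered products. I have verified that both transitions then agree term by term, so the induction closes once the difference-one case is in hand.

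The difference-one relations $A_{n+1}A_n=q^{-2}A_nA_{n+1}$ for all $n\in\mathbb N$ are the crux and the step I expect to be the main obstacle. For $n=0$ the relation unwinds, via \eqref{eq:BAalt} and \eqref{eq:dam1introalt}, into exactly the $q$-Serre relation \eqref{eq:qOns1}, using the displayed expansion of $\lbrack W_0,\lbrack W_0,\lbrack W_0,W_1\rbrack_q\rbrack_{q^{-1}}\rbrack$. For general $n$ I would pass through Rosso's injective homomorphism $\natural:U^+_q\to\mathbb V$ into the $q$-shuffle algebra: by injectivity it suffices to verify $\natural(A_{n+1})\star\natural(A_n)=q^{-2}\,\natural(A_n)\star\natural(A_{n+1})$ in $\mathbb V$, where $\natural(A_0)=x$ and $\natural(A_{n+1})=(q+q^{-1})^{-1}(q^{-4}-1)\bigl(xy\star\natural(A_n)-\natural(A_n)\star xy\bigr)$ since $\natural(E_\delta)=(q^{-4}-1)xy$. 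This reduces the base case to an identity among $q$-shuffle products supported on the words with $n+1$ letters $x$ and $n$ letters $y$, provable by induction on $n$; the shuffle bookkeeping is heavy because $\natural(A_n)$ is not a single word once $n\ge 2$. An alternative route to the difference-one relations is a dedicated induction on $n$ exploiting Lemma \ref{lem:dam2}, the mutual commutativity of $\lbrace E_{n\delta}\rbrace_{n=1}^\infty$, and the two expressions \eqref{eq:dam2introalt}, \eqref{eq:Bdel2} for $E_{n\delta}$.
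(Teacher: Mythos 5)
First, a point of reference: the paper does not prove Lemma \ref{lem:com3} at all; it is quoted from \cite[p.~300]{damiani}, so there is no in-paper argument to compare against. That said, your skeleton is sound. The reduction of the $\alpha_1$ relations to the $\alpha_0$ relations via $\tau$ is valid by Lemma \ref{lem:asym2}; the identity $(\diamond)$ is a correct consequence of the Leibniz rule for $D=(q+q^{-1})^{-1}\mathrm{ad}_{E_\delta}$ together with \eqref{eq:dam1introalt}; and the induction on the larger index is well-founded. I checked the cases $i-j\le 3$, including the normalization of the out-of-order product $A_{j+2}A_{j+1}$ that appears in passing from even to odd difference, and the coefficients match (in particular the even-case square term $-q^{j-i+1}(q-q^{-1})E^2_{(r+j)\delta+\alpha_0}$ comes out right). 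This is essentially the spirit of Damiani's own inductive argument.

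As a self-contained proof, however, the proposal has a genuine gap, and it sits exactly where you flag it: the difference-one relations $E_{(n+1)\delta+\alpha_0}E_{n\delta+\alpha_0}=q^{-2}E_{n\delta+\alpha_0}E_{(n+1)\delta+\alpha_0}$ are never established for $n\ge 1$. You verify $n=0$ correctly (it is $-q^{-2}$ times the $q$-Serre relation \eqref{eq:qOns1}), and then offer two candidate strategies for general $n$ (the $q$-shuffle computation, or an induction through Lemma \ref{lem:dam2} and \eqref{eq:dam2introalt}, \eqref{eq:Bdel2}) without carrying either out. This base case is not a formality: it is precisely the first display of Lemma \ref{lem:dam1} (Chari--Pressley, \cite[Lemma~3.5]{catalan}), a substantive theorem in its own right, and the shuffle route in particular involves heavy bookkeeping since $\natural(E_{n\delta+\alpha_0})$ is a long linear combination of words once $n\ge 2$. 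If you instead import these relations from Lemma \ref{lem:dam1}, you must check that the cited sources do not themselves derive them from Lemma \ref{lem:com3}. A second, lesser gap: the general identity $\rho_{i+1,j}=D(\rho_{i,j})-\rho_{i,j+1}$ is asserted rather than demonstrated; given the parity case split and the collision term whose coefficient deviates from the generic pattern at $\ell=r$, this verification needs to be written out for general $\ell$ before the induction can be said to close.
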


\begin{lemma}
\label{lem:com2}
{\rm (See \cite[p.~304]{damiani}.)}
For $i\geq 1$ and $j\geq 0$ the following hold in $U^+_q$:
\begin{align*}
&E_{i\delta} E_{j\delta+\alpha_0} =
E_{j\delta+ \alpha_0} E_{i\delta}
+ q^{2-2i}(q+q^{-1}) E_{(i+j)\delta+\alpha_0}
\\
& \qquad \qquad \qquad \qquad \qquad  \qquad -\;
q^2(q^2-q^{-2})\sum_{\ell=1}^{i-1}
q^{-2\ell}
E_{(j+\ell) \delta+\alpha_0}
E_{(i-\ell) \delta},
\\
&
E_{j\delta+\alpha_1}
E_{i\delta}
=
E_{i\delta}
E_{j\delta+ \alpha_1}
+ q^{2-2i}(q+q^{-1}) E_{(i+j)\delta+\alpha_1}
\\
& \qquad \qquad \qquad \qquad \qquad \qquad -\;
q^2(q^2-q^{-2})\sum_{\ell=1}^{i-1}
q^{-2\ell}
E_{(i-\ell) \delta}
E_{(j+\ell) \delta+\alpha_1}.
\end{align*}
\end{lemma}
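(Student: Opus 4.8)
The plan is to first collapse the two displayed identities into one, and then prove that one by induction. Recall from Lemma \ref{lem:asym2} that the antiautomorphism $\tau$ fixes each $E_{n\delta}$, interchanges $E_{n\delta+\alpha_0}\leftrightarrow E_{n\delta+\alpha_1}$, and reverses the order of products. Applying $\tau$ to the first equation and reindexing the sum therefore produces exactly the second equation. Hence it suffices to prove the first equation, which I abbreviate as $P(i,j)$, for all $i\geq 1$ and $j\geq 0$.

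I would establish $P(i,j)$ by induction on $j$ with $i$ fixed. For the inductive step ($j\geq 1$) I would use the raising recursion \eqref{eq:dam1introalt} to write $E_{j\delta+\alpha_0}=(q+q^{-1})^{-1}[E_\delta,E_{(j-1)\delta+\alpha_0}]$, so that
\[
[E_{i\delta},E_{j\delta+\alpha_0}]=\frac{1}{q+q^{-1}}\,[E_{i\delta},[E_\delta,E_{(j-1)\delta+\alpha_0}]].
\]
Expanding the right-hand side by the Jacobi identity and using that the $\lbrace E_{n\delta}\rbrace$ mutually commute (so $[E_{i\delta},E_\delta]=0$) collapses this to $(q+q^{-1})^{-1}[E_\delta,[E_{i\delta},E_{(j-1)\delta+\alpha_0}]]$. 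Now I would substitute the induction hypothesis $P(i,j-1)$ for the inner commutator and apply $[E_\delta,-]$ termwise: since $[E_\delta,E_{n\delta}]=0$ and $[E_\delta,E_{n\delta+\alpha_0}]=(q+q^{-1})E_{(n+1)\delta+\alpha_0}$ by \eqref{eq:dam1introalt}, the leading term and each summand of $P(i,j-1)$ map precisely to the corresponding term of $P(i,j)$, with the factors $q+q^{-1}$ cancelling and the weights $q^{-2\ell}$ preserved. This step is clean and requires no case analysis.

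The main obstacle is the base case $j=0$, namely the identity
\[
Q(i):\quad [E_{i\delta},W_0]=q^{2-2i}(q+q^{-1})E_{i\delta+\alpha_0}-q^2(q^2-q^{-2})\sum_{\ell=1}^{i-1}q^{-2\ell}E_{\ell\delta+\alpha_0}E_{(i-\ell)\delta}.
\]
Here the recursion \eqref{eq:dam2introalt} together with Lemma \ref{lem:dam2} is of no help: at $j=0$ the relation in Lemma \ref{lem:dam2} is equivalent to \eqref{eq:dam2introalt}, and substituting it merely reproduces $Q(i)$ tautologically. Instead I would use the variant \eqref{eq:Bdel2}, writing $E_{i\delta}=q^{-2}W_1E_{(i-1)\delta+\alpha_0}-E_{(i-1)\delta+\alpha_0}W_1$, and expand $[E_{i\delta},W_0]$ by the Leibniz rule. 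The resulting pieces involve $[E_{(i-1)\delta+\alpha_0},W_0]=[E_{(i-1)\delta+\alpha_0},E_{\alpha_0}]$, which is governed by Lemma \ref{lem:com3}, and $[W_1,W_0]$, which I would rewrite in terms of $E_\delta$ and $W_0W_1$ using the definition of $E_\delta$ in \eqref{eq:BAalt}. A secondary induction on $i$ (with base case $Q(1)$ coming directly from $[E_\delta,W_0]=(q+q^{-1})E_{\delta+\alpha_0}$ via \eqref{eq:dam1introalt}) should then close the computation, the factor $q^{2-2i}$ accumulating from the $q^{-2}$ in \eqref{eq:Bdel2} and the weighted sum emerging from Lemma \ref{lem:com3}. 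I expect the genuine difficulty to lie precisely here: tracking the $q^{-2\ell}$-weighted sums, reconciling the even/odd case split built into Lemma \ref{lem:com3}, and recognising the leading term $E_{i\delta+\alpha_0}$ as it is produced from $[E_\delta,E_{(i-1)\delta+\alpha_0}]$ through \eqref{eq:dam1introalt}.
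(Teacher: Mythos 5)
The paper offers no proof of this lemma: it is imported verbatim from Damiani (the citation to p.~304 of \cite{damiani} \emph{is} the proof), so there is no in-text argument for you to match. Judged on its own terms, your reduction of the second identity to the first via $\tau$ is correct, and your inductive step on $j$ is complete: the Jacobi identity together with $[E_{i\delta},E_\delta]=0$ and $[E_\delta,E_{n\delta+\alpha_0}]=(q+q^{-1})E_{(n+1)\delta+\alpha_0}$ does carry $P(i,j-1)$ to $P(i,j)$ exactly as you say.

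The gap is the base case $Q(i)$, which is where all the content of the lemma lives and which you leave as a plan with acknowledged loose ends. Concretely, your induction scheme does not close as stated. Expanding $[E_{i\delta},W_0]$ via \eqref{eq:Bdel2}, with $F=E_{(i-1)\delta+\alpha_0}$ and $[F,W_0]=(q^{-2}-1)W_0F+S$ from Lemma \ref{lem:com3} (so $S$ is a weighted sum of products $E_{a\delta+\alpha_0}E_{b\delta+\alpha_0}$ with $a+b=i-1$, $a,b\geq 1$), the $W_0W_1$-terms cancel and one is left with $q^{-2}E_\delta F-q^2FE_\delta$ plus $q^{-2}W_1S-q^2SW_1$. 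Reducing $q^{-2}W_1E_{a\delta+\alpha_0}E_{b\delta+\alpha_0}-q^2E_{a\delta+\alpha_0}E_{b\delta+\alpha_0}W_1$ via Lemma \ref{lem:dam2} yields $q^2E_{a\delta+\alpha_0}E_{(b+1)\delta}+E_{(a+1)\delta}E_{b\delta+\alpha_0}$, and the second term must be reordered using $P(a+1,b)$ with $b\geq 1$ --- the full statement at a smaller first index, not merely $Q(a+1)$. So a secondary induction whose hypothesis is only $Q(i')$ for $i'<i$ is inadequate; you need strong induction on $i$ with the $j$-induction nested inside, and the reordering corrections from $P(a+1,b)$ are precisely what convert the naive coefficient $q^{-2}(q+q^{-1})$ of $E_{i\delta+\alpha_0}$ into the required $q^{2-2i}(q+q^{-1})$. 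Even with the induction repaired, the general bookkeeping (including the even/odd split in Lemma \ref{lem:com3}) remains to be carried out; the case $i=2$ does check out, but that is not a proof.
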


\noindent We mention an alternative version of Lemma \ref{lem:com3}.

\begin{lemma}
\label{lem:dam1} {\rm (See \cite[Section~2.3]{charp} or \cite[Lemma~3.5]{catalan}.)} The following relations hold in $U^+_q$.
For $i \in \mathbb N$,
\begin{align*}
&
\lbrack E_{(i+1)\delta+\alpha_0},
E_{i\delta+\alpha_0}\rbrack_q=0, \qquad \qquad
\lbrack E_{i\delta+\alpha_1},
E_{(i+1)\delta+\alpha_1}\rbrack_q=0.
\end{align*}
\noindent For distinct $i,j \in \mathbb N$,
\begin{align*}
&
\lbrack E_{(i+1)\delta+\alpha_0},
E_{j\delta+\alpha_0}\rbrack_q +
\lbrack E_{(j+1)\delta+\alpha_0},
E_{i\delta+\alpha_0} \rbrack_q =0,
\\
&
\lbrack 
E_{j\delta+\alpha_1},
E_{(i+1)\delta+\alpha_1}\rbrack_q + 
\lbrack E_{i\delta+\alpha_1},
E_{(j+1)\delta+\alpha_1} \rbrack_q=0.
\end{align*}
\end{lemma}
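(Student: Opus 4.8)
The plan is to derive all four relations from the reduction rules of Lemma~\ref{lem:com3}, and then to pass from the two $\alpha_0$ relations to the two $\alpha_1$ relations using the antiautomorphism $\tau$ of Lemma~\ref{lem:asym2}. Throughout, abbreviate $A_n = E_{n\delta+\alpha_0}$ and $C_n = E_{n\delta+\alpha_1}$, and recall $\lbrack X,Y\rbrack_q = qXY - q^{-1}YX$. The first $\alpha_0$ relation is immediate: applying Lemma~\ref{lem:com3}(i) to the pair $(i+1,i)$, we have $(i+1)-i = 1 = 2r+1$ with $r=0$, so the correction sum is empty and $A_{i+1}A_i = q^{-2}A_iA_{i+1}$. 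Substituting into $\lbrack A_{i+1},A_i\rbrack_q = qA_{i+1}A_i - q^{-1}A_iA_{i+1}$ gives $(q^{-1}-q^{-1})A_iA_{i+1}=0$.

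For the distinct-index $\alpha_0$ relation, observe that the left-hand side $\lbrack A_{i+1},A_j\rbrack_q + \lbrack A_{j+1},A_i\rbrack_q$ is symmetric under $i\leftrightarrow j$, so since $i\neq j$ we may assume $i>j\geq 0$. The crucial point is that the two index differences $(i+1)-j$ and $i-(j+1)$ differ by $2$ and hence have the same parity; consequently the two anti-normal products $A_{i+1}A_j$ and $A_iA_{j+1}$ are governed by the \emph{same} case (odd or even) of Lemma~\ref{lem:com3}. When $i=j+1$ the second product degenerates to the square $A_i^2$ and the even case applies to $A_{i+1}A_j = A_{j+2}A_j$ with $r=1$; this is the base case, mirroring the direct check $\lbrack A_2,A_0\rbrack_q + \lbrack A_1,A_1\rbrack_q = 0$. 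Expanding both $q$-commutators via Lemma~\ref{lem:com3}, every surviving monomial is a normal-ordered product $A_aA_b$ with $a\leq b$ (or a square). The leading terms $A_jA_{i+1}$ cancel automatically, since $q\cdot q^{-2} - q^{-1} = 0$, and it remains to check that the two correction sums, together with the squared terms in the even case, cancel term by term; the parity alignment of the two differences is exactly what aligns the summands so that this cancellation occurs.

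Finally, the two $\alpha_1$ relations follow from the $\alpha_0$ relations by applying $\tau$. Since $\tau$ is an antiautomorphism with $\tau(A_n)=C_n$ by Lemma~\ref{lem:asym2}, we compute $\tau(\lbrack A_{i+1},A_j\rbrack_q) = q\,\tau(A_j)\tau(A_{i+1}) - q^{-1}\tau(A_{i+1})\tau(A_j) = qC_jC_{i+1} - q^{-1}C_{i+1}C_j = \lbrack C_j, C_{i+1}\rbrack_q$. Applying $\tau$ to $\lbrack A_{i+1},A_i\rbrack_q = 0$ yields $\lbrack C_i, C_{i+1}\rbrack_q = 0$, and applying it to the distinct-index identity yields $\lbrack C_j, C_{i+1}\rbrack_q + \lbrack C_i, C_{j+1}\rbrack_q = 0$, which are precisely the stated $\alpha_1$ relations. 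I expect the main obstacle to be the bookkeeping in the middle paragraph: matching the two correction sums produced by Lemma~\ref{lem:com3} and confirming that they, along with the squared terms, cancel identically in each parity case.
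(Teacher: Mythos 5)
Your argument is essentially correct, but it takes a different route from the paper: the paper offers no proof of this lemma at all, simply citing \cite[Section~2.3]{charp} and \cite[Lemma~3.5]{catalan} and presenting it as ``an alternative version of Lemma \ref{lem:com3}.'' Your proposal makes that equivalence explicit by deriving the four identities directly from the reduction rules of Lemma \ref{lem:com3}, with the $\tau$-symmetry of Lemma \ref{lem:asym2} halving the work; this buys a self-contained verification at the cost of the index bookkeeping you defer. That deferred step does go through, and it is worth recording the one point your sketch glosses over: in the second commutator $\lbrack E_{(j+1)\delta+\alpha_0}, E_{i\delta+\alpha_0}\rbrack_q$ (with $i>j$) the normal-ordered term does \emph{not} cancel the way $A_jA_{i+1}$ does in the first commutator; it survives with coefficient $q-q^{-3}=q^{-1}(q^2-q^{-2})$ and must be absorbed as the $\ell=0$ term of the correction sum. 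Writing $A_n=E_{n\delta+\alpha_0}$, in both parity cases one finds
\begin{align*}
\lbrack A_{j+1},A_i\rbrack_q
= q^{-1}(q^2-q^{-2})\sum_{\ell=0}^{r-1} q^{-2\ell}A_{j+1+\ell}A_{i-\ell}
\;+\;\varepsilon\, q^{\,j-i+1}(q-q^{-1})A_{r+j+1}^2,
\end{align*}
where $r=\lfloor (i-j)/2\rfloor$ and $\varepsilon=1$ exactly when $i-j$ is odd; the shift $\ell\mapsto \ell-1$ converts the sum into $q(q^2-q^{-2})\sum_{\ell=1}^{r}q^{-2\ell}A_{j+\ell}A_{i+1-\ell}$, which together with the squared term is precisely the negative of $\lbrack A_{i+1},A_j\rbrack_q$ as computed from Lemma \ref{lem:com3}. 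Your base case $i=j+1$ and the $\tau$-transfer to the $\alpha_1$ relations are both handled correctly.
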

\noindent We mention an alternative version of Lemma \ref{lem:com2}. For notational convenience define  
\begin{align*}
E_{0\delta}=-(q-q^{-1})^{-1}.
\end{align*}
\begin{lemma}
\label{lem:altxx} {\rm (See \cite[Lemma~3.4]{catalan}.)}
For $i,j\in \mathbb N$ the following hold in $U^+_q$:
\begin{align*}
&
\lbrack E_{i\delta+\alpha_0}, E_{(j+1)\delta} \rbrack = 
\lbrack E_{(i+1)\delta+\alpha_0}, E_{j\delta} \rbrack_{q^2},
\\
&
\lbrack E_{(j+1)\delta}, E_{i\delta+\alpha_1}\rbrack 
=
\lbrack E_{j\delta}, E_{(i+1)\delta+\alpha_1}\rbrack_{q^2}.
\end{align*}
\end{lemma}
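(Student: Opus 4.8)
The plan is to prove the first equation directly and then deduce the second by the symmetry furnished by $\tau$. For the symmetry step I would invoke the antiautomorphism $\tau$ of Definition \ref{def:tauA}, which by Lemma \ref{lem:asym2} interchanges $E_{n\delta+\alpha_0}\leftrightarrow E_{n\delta+\alpha_1}$ for $n\in\mathbb N$ and fixes each $E_{n\delta}$ with $n\geq 1$; it also fixes the scalar $E_{0\delta}$. Since $\tau$ is an antiautomorphism, it sends $\lbrack X,Y\rbrack \mapsto \lbrack \tau(Y),\tau(X)\rbrack$ and $\lbrack X,Y\rbrack_{q^2}\mapsto \lbrack \tau(Y),\tau(X)\rbrack_{q^2}$. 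Applying $\tau$ to the first equation, with $X=E_{i\delta+\alpha_0}$, $Y=E_{(j+1)\delta}$ on the left and $X=E_{(i+1)\delta+\alpha_0}$, $Y=E_{j\delta}$ on the right, converts it into exactly the second equation. Hence it suffices to establish the first.

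For the first equation I would split into the cases $j=0$ and $j\geq 1$. When $j=0$, the element $E_{0\delta}=-(q-q^{-1})^{-1}$ is a scalar, so the right-hand side $\lbrack E_{(i+1)\delta+\alpha_0}, E_{0\delta}\rbrack_{q^2}$ collapses to $(q^2-q^{-2})E_{0\delta}\,E_{(i+1)\delta+\alpha_0}=-(q+q^{-1})E_{(i+1)\delta+\alpha_0}$. The left-hand side $\lbrack E_{i\delta+\alpha_0}, E_{\delta}\rbrack$ equals $-(q+q^{-1})E_{(i+1)\delta+\alpha_0}$ directly from the recursive definition \eqref{eq:dam1introalt}, so the two sides agree.

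For $j\geq 1$ I would expand both sides using the reduction rule in Lemma \ref{lem:com2}. Expanding the left-hand side through the product $E_{(j+1)\delta}\,E_{i\delta+\alpha_0}$ (taking the lemma's first index to be $j+1$ and its second index to be $i$) yields $-q^{-2j}(q+q^{-1})E_{(i+j+1)\delta+\alpha_0}+q^2(q^2-q^{-2})\sum_{\ell=1}^{j}q^{-2\ell}E_{(i+\ell)\delta+\alpha_0}E_{(j+1-\ell)\delta}$. Expanding the right-hand $q^2$-commutator through the product $E_{j\delta}\,E_{(i+1)\delta+\alpha_0}$ (lemma's first index $j$, second index $i+1$) yields the same leading term $-q^{-2j}(q+q^{-1})E_{(i+j+1)\delta+\alpha_0}$, together with $(q^2-q^{-2})E_{(i+1)\delta+\alpha_0}E_{j\delta}+(q^2-q^{-2})\sum_{\ell=1}^{j-1}q^{-2\ell}E_{(i+1+\ell)\delta+\alpha_0}E_{(j-\ell)\delta}$. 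The leading terms match, so the content of the equation reduces to matching the two residual sums.

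The main obstacle is precisely this matching of sums, which I expect to follow from the index shift $\ell\mapsto \ell-1$. The $\ell=1$ term on the left, namely $q^2q^{-2}E_{(i+1)\delta+\alpha_0}E_{j\delta}=E_{(i+1)\delta+\alpha_0}E_{j\delta}$, accounts for the isolated term on the right, while the terms $\ell=2,\dots,j$ on the left reindex to $\ell=1,\dots,j-1$ on the right, the prefactor $q^2q^{-2\ell}$ becoming $q^{-2(\ell-1)}$. The care needed lies entirely in tracking the powers of $q$ and confirming that the boundary contributions align; once this bookkeeping is carried out the two expressions coincide, completing the proof of the first equation and hence, via $\tau$, of the second.
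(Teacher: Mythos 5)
Your argument is correct and complete. The paper itself gives no proof of this lemma --- it simply cites \cite[Lemma~3.4]{catalan} --- so your proposal supplies a self-contained derivation that the paper omits. The route you take is natural given the surrounding material: you derive the first identity from Damiani's reduction rule (Lemma \ref{lem:com2}) and then transport it to the second by the antiautomorphism $\tau$ via Lemma \ref{lem:asym2}. I checked the details: the $j=0$ case correctly uses that $E_{0\delta}$ is the scalar $-(q-q^{-1})^{-1}$, so that $\lbrack E_{(i+1)\delta+\alpha_0},E_{0\delta}\rbrack_{q^2}=(q^2-q^{-2})E_{0\delta}E_{(i+1)\delta+\alpha_0}=-(q+q^{-1})E_{(i+1)\delta+\alpha_0}$, matching $\lbrack E_{i\delta+\alpha_0},E_{\delta}\rbrack=-(q+q^{-1})E_{(i+1)\delta+\alpha_0}$ from \eqref{eq:dam1introalt}; for $j\geq 1$ both applications of Lemma \ref{lem:com2} respect its hypotheses (the first index is $j+1\geq 1$, resp.\ $j\geq 1$), the leading terms $-q^{-2j}(q+q^{-1})E_{(i+j+1)\delta+\alpha_0}$ agree, the $\ell=1$ term of the left sum produces exactly $(q^2-q^{-2})E_{(i+1)\delta+\alpha_0}E_{j\delta}$, and the shift $\ell\mapsto\ell-1$ aligns the remaining terms with the correct powers of $q$. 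The $\tau$-step is also sound, since an antiautomorphism sends $\lbrack X,Y\rbrack_{q^2}\mapsto\lbrack\tau(Y),\tau(X)\rbrack_{q^2}$ and $\tau$ fixes $E_{n\delta}$ while swapping $E_{n\delta+\alpha_0}\leftrightarrow E_{n\delta+\alpha_1}$. No circularity arises, as Lemmas \ref{lem:com2} and \ref{lem:asym2} are established independently of the statement being proved.
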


\section{Generating functions for $U^+_q$}

\noindent In the previous section we displayed a PBW basis for $U^+_q$ along with the corresponding reduction rules. In this section we describe these reduction rules using generating functions.
We acknowledge that the material in this section is well known to the experts, and  readily follows from \cite[Section~IV]{DF} and \cite{beck, bcp}.
The material is included for use later in the paper.

\begin{definition} \label{def:Bgen}
\rm We define some generating functions in the indeterminate $t$:
\begin{align}
&E^-(t) = \sum_{n  \in \mathbb N}  E_{n\delta+\alpha_0} t^n, \qquad \quad
E^+(t) = \sum_{n  \in \mathbb N}  E_{n\delta+\alpha_1} t^n,
\\
&E(t) = \sum_{n\in \mathbb N} E_{n\delta} t^n. 
 \label{eq:zerodelta}
 \end{align}
\end{definition}
\noindent Observe that
\begin{align}
E^-(0)=W_0, \qquad \qquad E^+(0)=W_1, \qquad \qquad E(0)=-(q-q^{-1})^{-1}.
\label{eq:zv}
\end{align}

\begin{lemma}
\label{lem:BPhi}
For the algebra $U^+_q$,
\begin{align}
\label{eq:BP1}
&\frac{t \lbrack E_\delta, E^-(t) \rbrack}{q+q^{-1}}= 
E^-(t)-W_0,
\qquad \qquad
\frac{t\lbrack E^+(t), E_\delta\rbrack}{q+q^{-1}}= 
E^+(t)-W_1.
\end{align}
\end{lemma}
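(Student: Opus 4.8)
The plan is to read both identities directly off the recursion \eqref{eq:dam1introalt}, after moving the commutator bracket inside the generating-function sum. I would prove the first identity by hand and then deduce the second one for free using the antiautomorphism $\tau$ of Definition \ref{def:tauA}, so that essentially only one computation is needed.

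For the first identity, I would begin by using the fact that $\lbrack E_\delta, -\rbrack$ is $\mathbb F$-linear (and commutes with scalar multiplication by powers of $t$) to write
\begin{align*}
\frac{t\lbrack E_\delta, E^-(t)\rbrack}{q+q^{-1}}
= \sum_{n\in\mathbb N}\frac{\lbrack E_\delta, E_{n\delta+\alpha_0}\rbrack}{q+q^{-1}}\,t^{n+1}.
\end{align*}
The next step is to recognize each summand: by the left-hand recursion in \eqref{eq:dam1introalt}, applied with index $n+1\geq 1$, the coefficient $\lbrack E_\delta, E_{n\delta+\alpha_0}\rbrack/(q+q^{-1})$ equals $E_{(n+1)\delta+\alpha_0}$. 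Substituting and reindexing with $m=n+1$ gives $\sum_{m\geq 1} E_{m\delta+\alpha_0}\,t^m$, which is exactly $E^-(t)$ with its constant term removed. Since that constant term is $E_{\alpha_0}=W_0$ by \eqref{eq:BAalt}, the expression equals $E^-(t)-W_0$, as required.

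For the second identity, I would apply $\tau$ to the first one. By Lemma \ref{lem:asym2}, $\tau$ sends $E_{n\delta+\alpha_0}\leftrightarrow E_{n\delta+\alpha_1}$ and fixes $E_\delta$, so $\tau(E^-(t))=E^+(t)$ and $\tau(W_0)=W_1$; and since $\tau$ is an antiautomorphism, $\tau\lbrack E_\delta, E^-(t)\rbrack=\lbrack E^+(t), E_\delta\rbrack$. Applying $\tau$ to the first identity therefore yields the second verbatim, with no further calculation.

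There is no serious obstacle here; the whole content is bookkeeping. The one point requiring care is the index shift: the bracket recursion produces the terms of $E^-(t)$ shifted up by one in the power of $t$, so the $n=0$ term of $E^-(t)$ (namely $W_0$) is never produced on the left-hand side, which is precisely why it reappears as the correction $-W_0$ on the right.
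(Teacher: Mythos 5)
Your proof is correct and is essentially the paper's own argument, which simply observes that both identities are the recursion \eqref{eq:dam1introalt} rewritten in generating-function form; your index-shift bookkeeping makes explicit exactly what the paper leaves implicit. Deriving the second identity from the first via the antiautomorphism $\tau$ (justified by Lemma \ref{lem:asym2}) is a harmless cosmetic variation rather than a different route, since the second recursion in \eqref{eq:dam1introalt} yields it just as directly.
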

\begin{proof} These equations express the relations 
 \eqref{eq:dam1introalt} in terms of generating functions.
\end{proof}

  \begin{lemma} \label{prop:pbwRelP} 
  For the algebra $U^+_q$,
  \begin{align}
  \lbrack W_0, E^+(t) \rbrack_q &= 
  - q t^{-1} E(t) -\frac{qt^{-1}}{q-q^{-1}},    \label{eq:L1}
  \\
  \lbrack E^-( t), W_1 \rbrack_q &= 
  -  qt^{-1} E(t) -\frac{qt^{-1}}{q-q^{-1}}.        \label{eq:L2}
  \end{align}
  \end{lemma}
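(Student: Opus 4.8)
```latex
\noindent
\textbf{Proof proposal.} The plan is to translate the two recursive definitions
\eqref{eq:dam2introalt} and \eqref{eq:Bdel2} into the language of generating
functions, much as Lemma \ref{lem:BPhi} encoded \eqref{eq:dam1introalt}. The
target identities \eqref{eq:L1}, \eqref{eq:L2} each assert an equality between
generating functions, so it suffices to check that the coefficient of $t^n$
matches on both sides for every $n \in \mathbb N$. I would prove \eqref{eq:L1}
in detail and then obtain \eqref{eq:L2} by applying the antiautomorphism $\tau$
of Definition \ref{def:tauA}, using Lemma \ref{lem:asym2}.

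\medskip

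\noindent
First I would expand the left-hand side of \eqref{eq:L1}. By definition of the
$q$-commutator,
\begin{align*}
\lbrack W_0, E^+(t) \rbrack_q
= \sum_{n \in \mathbb N} \bigl( q W_0 E_{n\delta+\alpha_1} - q^{-1} E_{n\delta+\alpha_1} W_0 \bigr) t^n.
\end{align*}
The coefficient of $t^n$ is therefore
$q W_0 E_{n\delta+\alpha_1} - q^{-1} E_{n\delta+\alpha_1} W_0$. The key
observation is that the relation \eqref{eq:dam2introalt}, after renaming the
index, reads
$E_{(n+1)\delta} = q^{-2} E_{n\delta+\alpha_1} W_0 - W_0 E_{n\delta+\alpha_1}$
for $n \in \mathbb N$. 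I would rearrange this so that the combination
$q W_0 E_{n\delta+\alpha_1} - q^{-1} E_{n\delta+\alpha_1} W_0$ is expressed in
terms of $E_{(n+1)\delta}$; concretely, multiplying \eqref{eq:dam2introalt} by a
suitable power of $q$ and comparing, one finds that this combination equals a
scalar multiple of $E_{(n+1)\delta}$. Matching against the right-hand side of
\eqref{eq:L1}, whose coefficient of $t^n$ is $-q\, E_{(n+1)\delta}$ for $n \ge 0$
together with the boundary contribution from $E_{0\delta} = -(q-q^{-1})^{-1}$
that produces the stray term $-qt^{-1}(q-q^{-1})^{-1}$, I would verify the two
series agree term by term.

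\medskip

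\noindent
The $t^{-1}$ on the right-hand side is what signals that I must treat the index
bookkeeping carefully: the factor $t^{-1}$ shifts the series so that the
coefficient of $t^n$ on the left (an element built from $E_{n\delta+\alpha_1}$)
is matched with $E_{(n+1)\delta}$ on the right, and the constant term
$E_{0\delta}$ is exactly the inhomogeneous piece $-qt^{-1}(q-q^{-1})^{-1}$. I
expect the main obstacle to be getting the scalar normalizations and the
$q$ versus $q^{-1}$ conventions in $\lbrack\,,\,\rbrack_q$ exactly right, since a
misplaced power of $q$ or an off-by-one in the shift would corrupt the match;
the algebra itself is routine once \eqref{eq:dam2introalt} is solved for the
relevant $q$-commutator. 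For \eqref{eq:L2}, applying $\tau$ to \eqref{eq:L1} and
using that $\tau$ is an antiautomorphism swapping $W_0 \leftrightarrow W_1$ and
$E_{n\delta+\alpha_0} \leftrightarrow E_{n\delta+\alpha_1}$ while fixing
$E_{n\delta}$, I would check that $\tau$ converts $\lbrack W_0, E^+(t)\rbrack_q$
into $\lbrack E^-(t), W_1 \rbrack_q$ (the commutator reverses and the scalars
recombine correctly because $\tau$ fixes $q$), and that the right-hand side is
preserved since $\tau$ fixes $E(t)$ and the constant. This yields
\eqref{eq:L2} directly from \eqref{eq:L1}.
```
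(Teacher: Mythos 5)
Your proposal is correct and matches the paper's proof, which simply observes that \eqref{eq:L1} and \eqref{eq:L2} express \eqref{eq:dam2introalt} and \eqref{eq:Bdel2}, respectively, in terms of generating functions; your coefficient check for \eqref{eq:L1} (the coefficient of $t^n$ on the left is $qW_0E_{n\delta+\alpha_1}-q^{-1}E_{n\delta+\alpha_1}W_0=-qE_{(n+1)\delta}$, with $E_{0\delta}$ cancelling the inhomogeneous term) is exactly the intended computation. The only cosmetic difference is that the paper reads \eqref{eq:L2} directly off \eqref{eq:Bdel2} while you obtain it by applying $\tau$ to \eqref{eq:L1}; these are equivalent, since Lemma \ref{lem:asym2} is itself proved by comparing \eqref{eq:dam2introalt} with \eqref{eq:Bdel2}.
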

  \begin{proof} The equation \eqref{eq:L1} (resp.~\eqref{eq:L2}) expresses the relation  \eqref{eq:dam2introalt} (resp.~\eqref{eq:Bdel2}) in terms of generating
  functions.
  \end{proof}

\noindent For the rest of the paper, let $s$ denote an indeterminate that commutes with $t$. By the comment above Lemma \ref{lem:dam2},
  \begin{align}
  \label{eq:EsEt}
  \lbrack E(s), E(t)\rbrack=0.
  \end{align}
  
  \begin{proposition} \label{prop:New} For the algebra $U^+_q$,
  \begin{align}
  \lbrack E^-(s), E^+(t) \rbrack_q = -q \,\frac{E(s)-E(t)}{s-t}.
  \label{eq:New}
  \end{align}
  \end{proposition}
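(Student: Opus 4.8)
The plan is to reduce everything to the single rearrangement rule in Lemma \ref{lem:dam2}, working entirely at the level of the coefficient double sums. The $q$-commutator in \eqref{eq:New} is tailor-made so that Lemma \ref{lem:dam2} produces a clean cancellation, after which only the $E_{n\delta}$ terms survive; the final step is to recognize the resulting double sum as the divided difference $(E(s)-E(t))/(s-t)$.

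First I would expand the $q$-commutator as
\begin{align*}
\lbrack E^-(s), E^+(t)\rbrack_q
= q\,E^-(s)E^+(t) - q^{-1}E^+(t)E^-(s)
= \sum_{i,j\in\mathbb N}\bigl(q\,E_{i\delta+\alpha_0}E_{j\delta+\alpha_1} - q^{-1}E_{j\delta+\alpha_1}E_{i\delta+\alpha_0}\bigr)s^i t^j.
\end{align*}
Next I would invoke Lemma \ref{lem:dam2} in the rearranged form
$E_{i\delta+\alpha_0}E_{j\delta+\alpha_1} = q^{-2}E_{j\delta+\alpha_1}E_{i\delta+\alpha_0} - E_{(i+j+1)\delta}$
and substitute it into the first term. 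The coefficient of $E_{j\delta+\alpha_1}E_{i\delta+\alpha_0}$ then becomes $q\cdot q^{-2} - q^{-1} = 0$, so those products drop out, leaving
\begin{align*}
\lbrack E^-(s), E^+(t)\rbrack_q = -q\sum_{i,j\in\mathbb N} E_{(i+j+1)\delta}\,s^i t^j.
\end{align*}

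The final step is the combinatorial identification of the right-hand side. Using $\tfrac{s^n-t^n}{s-t}=\sum_{k=0}^{n-1}s^k t^{n-1-k}$ for $n\geq 1$ (the $n=0$ term contributing nothing, so that the value $E_{0\delta}=-(q-q^{-1})^{-1}$ is irrelevant here), and reindexing via $n=i+j+1$ with $i=k$, $j=n-1-k$, I would write
\begin{align*}
\frac{E(s)-E(t)}{s-t}
= \sum_{n\geq 1}E_{n\delta}\sum_{k=0}^{n-1}s^k t^{n-1-k}
= \sum_{i,j\in\mathbb N} E_{(i+j+1)\delta}\,s^i t^j,
\end{align*}
which matches the displayed sum and yields \eqref{eq:New}. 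I do not anticipate any genuine obstacle: the argument is a direct computation whose only delicate points are choosing the correct factor ordering in Lemma \ref{lem:dam2} so that the $q$-commutator coefficients cancel, and carefully tracking the reindexing of the double sum against the divided difference.
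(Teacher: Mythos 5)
Your proof is correct and is exactly the computation the paper has in mind: its one-line proof says the identity "expresses Lemma \ref{lem:dam2} in terms of generating functions," and your expansion, cancellation of the $E_{j\delta+\alpha_1}E_{i\delta+\alpha_0}$ terms via $q\cdot q^{-2}-q^{-1}=0$, and reindexing of the divided difference are precisely the details being suppressed. The handling of the $n=0$ term (so that the convention $E_{0\delta}=-(q-q^{-1})^{-1}$ plays no role) is also correctly noted.
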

  \begin{proof} The equation \eqref{eq:New} expresses Lemma \ref{lem:dam2} in terms of generating functions.
  \end{proof}

   \begin{proposition}\label{prop:GFwang} For the algebra $U^+_q$,
  \begin{align}
  0&=\frac{qt-q^{-1} s}{q-q^{-1}}E^-(s) E^-(t) +\frac{qs-q^{-1}t}{q-q^{-1}}E^-(t) E^-(s)
 - s \bigl( E^-(s)\bigr)^2   - t \bigl( E^-(t)\bigr)^2,
  \label{eq:cc3}
\\
  0&= 
  \frac{qt-q^{-1} s}{q-q^{-1}}E^+(t) E^+(s) +\frac{qs-q^{-1}t}{q-q^{-1}}E^+(s) E^+(t)
  -s \bigl( E^+(s)\bigr)^2
  - t \bigl( E^+(t)\bigr)^2.
   \label{eq:cc1}
  \end{align}
  \end{proposition}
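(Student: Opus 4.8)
The plan is to establish \eqref{eq:cc3} directly, and then obtain \eqref{eq:cc1} for free by applying the antiautomorphism $\tau$. By Lemma \ref{lem:asym2} the map $\tau$ sends $E_{n\delta+\alpha_0}\mapsto E_{n\delta+\alpha_1}$, so it sends the generating function $E^-(t)\mapsto E^+(t)$; since $\tau$ is an antiautomorphism it reverses each product while fixing the scalar coefficients. Applying $\tau$ to \eqref{eq:cc3} therefore sends $E^-(s)E^-(t)\mapsto E^+(t)E^+(s)$, $\bigl(E^-(s)\bigr)^2\mapsto \bigl(E^+(s)\bigr)^2$, and so on, and the outcome is exactly \eqref{eq:cc1}. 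Thus it suffices to prove \eqref{eq:cc3}.

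To prove \eqref{eq:cc3} I would start from the $\alpha_0$ relations of Lemma \ref{lem:dam1}, which merge into the single statement
\begin{align*}
\lbrack E_{(i+1)\delta+\alpha_0}, E_{j\delta+\alpha_0}\rbrack_q + \lbrack E_{(j+1)\delta+\alpha_0}, E_{i\delta+\alpha_0}\rbrack_q = 0 \qquad (i,j\in\mathbb N),
\end{align*}
since for $i=j$ it reduces to twice the first relation of that lemma. Multiplying by $s^it^j$, summing over $i,j\in\mathbb N$, and using $E^-(0)=W_0$ from \eqref{eq:zv} to handle the index shift $\sum_i E_{(i+1)\delta+\alpha_0}s^i = (E^-(s)-W_0)/s$, I obtain after clearing denominators the two-variable identity
\begin{align*}
t\,\lbrack E^-(s), E^-(t)\rbrack_q + s\,\lbrack E^-(t), E^-(s)\rbrack_q = t\,\lbrack W_0, E^-(t)\rbrack_q + s\,\lbrack W_0, E^-(s)\rbrack_q.
\end{align*}

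The key step is to specialize $s=t$. The left-hand side collapses to $2t\,\lbrack E^-(t),E^-(t)\rbrack_q = 2t(q-q^{-1})\bigl(E^-(t)\bigr)^2$ and the right-hand side to $2t\,\lbrack W_0,E^-(t)\rbrack_q$, whence
\begin{align*}
\lbrack W_0, E^-(t)\rbrack_q = (q-q^{-1})\bigl(E^-(t)\bigr)^2.
\end{align*}
This is precisely the reduction rule for left-multiplication by the minimal Damiani generator $W_0=E_{\alpha_0}$, and it is the source of the quadratic terms in \eqref{eq:cc3}. Substituting this identity together with its $s$-analogue back into the two-variable identity, expanding $\lbrack E^-(s),E^-(t)\rbrack_q = qE^-(s)E^-(t)-q^{-1}E^-(t)E^-(s)$ (and likewise with $s,t$ interchanged), collecting the coefficients of $E^-(s)E^-(t)$ and $E^-(t)E^-(s)$, and dividing by $q-q^{-1}$, I arrive at \eqref{eq:cc3}.

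The step I expect to be the main obstacle is accounting for the squared terms $s\bigl(E^-(s)\bigr)^2+t\bigl(E^-(t)\bigr)^2$: these are not the image of any single relation in Lemma \ref{lem:dam1}, and a direct coefficient-by-coefficient match of \eqref{eq:cc3} against the reduction rules of Lemma \ref{lem:com3} would require a separate inductive argument to handle the products $E_{n\delta+\alpha_0}W_0$. The diagonal specialization $s=t$ circumvents this entirely, producing the rule $\lbrack W_0,E^-(t)\rbrack_q=(q-q^{-1})\bigl(E^-(t)\bigr)^2$ in one stroke. The only genuine technical point is to confirm that these formal power series manipulations are valid, which they are since $E^-(s)-W_0$ has no constant term, so division by $s$ returns an honest power series.
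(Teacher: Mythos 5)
Your proof is correct and takes essentially the same route as the paper, whose entire proof is the assertion that \eqref{eq:cc3}, \eqref{eq:cc1} express Lemma \ref{lem:dam1} in generating-function form; your diagonal specialization $s=t$ is a clean way to produce the identity $\lbrack W_0,E^-(t)\rbrack_q=(q-q^{-1})\bigl(E^-(t)\bigr)^2$ needed for the boundary coefficients (those of $s^a t^0$ and $s^0 t^b$), a detail the paper's one-line proof leaves implicit, and the passage to \eqref{eq:cc1} via $\tau$ is exactly right. The only caveat is that cancelling the overall factor $2t$ in the diagonal step tacitly assumes $\mathrm{char}\,\mathbb F\neq 2$; this is avoidable by instead summing the relations of Lemma \ref{lem:dam1} over $i+j=n-1$, pairing the $(i,j)$ and $(j,i)$ terms and invoking the one-index relation for $i=j$, which yields the same identity coefficient-wise without dividing by $2$.
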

  \begin{proof} These equations express Lemma \ref{lem:dam1} 
  in terms of generating functions.
   \end{proof}
  
  \begin{proposition}
  \label{prop:wangGF}  For the algebra $U^+_q$,
  \begin{align}
 0 &=
  (s-q^2t)E^-(s)E(t) 
  +
  (q^{-2}t-s)E(t)E^-(s)
 + (q^2-q^{-2})t E^-(q^{-2}t) E(t),
 \label{eq:EE2}
 \\
  0 &= 
  (s-q^2t)E(t)E^+(s) 
  +
  (q^{-2}t-s)E^+(s)E(t)
 + (q^2-q^{-2})t E(t) E^+(q^{-2}t).
 \label{eq:EE1}
\end{align} 
\end{proposition}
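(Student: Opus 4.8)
The plan is to prove Proposition \ref{prop:wangGF} by translating Lemma \ref{lem:com2} into the language of generating functions, exactly as the analogous propositions (\ref{prop:New}, \ref{prop:GFwang}) were handled. The two equations \eqref{eq:EE2}, \eqref{eq:EE1} are interchanged by the antiautomorphism $\tau$ (using Lemma \ref{lem:asym2}, which sends $E_{i\delta+\alpha_0}\leftrightarrow E_{i\delta+\alpha_1}$ and fixes $E_{i\delta}$, together with the fact that $\tau$ reverses products), so I would prove \eqref{eq:EE2} in detail and obtain \eqref{eq:EE1} by applying $\tau$. Thus the real content is the first identity.

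First I would extract the coefficient of $s^i t^j$ from \eqref{eq:EE2} and check that it reproduces the first relation of Lemma \ref{lem:com2}. Writing $E^-(s)=\sum_i E_{i\delta+\alpha_0}s^i$ and $E(t)=\sum_j E_{j\delta}t^j$, the term $(s-q^2t)E^-(s)E(t)$ contributes $E_{(i-1)\delta+\alpha_0}E_{j\delta}$ and $-q^2 E_{i\delta+\alpha_0}E_{(j-1)\delta}$ to the $s^it^j$-coefficient; the term $(q^{-2}t-s)E(t)E^-(s)$ contributes $q^{-2}E_{(j-1)\delta}E_{i\delta+\alpha_0}-E_{j\delta}E_{(i-1)\delta+\alpha_0}$; and the last term $(q^2-q^{-2})t\,E^-(q^{-2}t)E(t)$ contributes $(q^2-q^{-2})q^{-2(i)}E_{i\delta+\alpha_0}E_{(j-1)\delta}$ after accounting for the $q^{-2}$ rescaling of the argument in $E^-$. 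I would then reindex so that the commutator $\lbrack E_{i\delta},E_{j\delta+\alpha_0}\rbrack$ from Lemma \ref{lem:com2} matches, being careful that Lemma \ref{lem:com2} is stated with $E_{i\delta}$ on the left for $i\ge 1,\ j\ge 0$ and uses the convention $E_{0\delta}=-(q-q^{-1})^{-1}$.

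The main obstacle I anticipate is bookkeeping the boundary terms at low degree, where the convention $E_{0\delta}=-(q-q^{-1})^{-1}$ and the vanishing of $E_{i\delta+\alpha_0}$, $E_{i\delta}$ for negative index must be handled consistently; in particular the geometric-type sum $\sum_{\ell=1}^{i-1}q^{-2\ell}E_{(j+\ell)\delta+\alpha_0}E_{(i-\ell)\delta}$ in Lemma \ref{lem:com2} should emerge from collecting the rescaled factor $E^-(q^{-2}t)$ against $E(t)$ once the telescoping in the generating-function identity is unwound. The cleanest route is probably to verify the equivalence at the level of the full generating function rather than coefficient-by-coefficient: substitute the definitions, group the six resulting double sums, and confirm that the coefficient of each monomial $s^it^j$ reduces to Lemma \ref{lem:com2} after using $\lbrack E(s),E(t)\rbrack=0$ \eqref{eq:EsEt} where needed. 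Once \eqref{eq:EE2} is established, the proof concludes by noting that applying $\tau$ to \eqref{eq:EE2}, reversing the order of all products and swapping $\alpha_0\leftrightarrow\alpha_1$ while fixing the $E_{j\delta}$ and the scalar factors in $s,t$, yields precisely \eqref{eq:EE1}.
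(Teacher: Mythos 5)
Your overall strategy (encode a known commutation relation among the $E_{n\delta+\alpha_0}$, $E_{n\delta}$ as a generating-function identity, then deduce the second equation from the first via $\tau$) is sound, and your use of $\tau$ together with Lemma \ref{lem:asym2} to pass from \eqref{eq:EE2} to \eqref{eq:EE1} is exactly right. But the paper does not prove the proposition from Lemma \ref{lem:com2}: its one-line proof is that \eqref{eq:EE2}, \eqref{eq:EE1} express Lemma \ref{lem:altxx} in terms of generating functions, and indeed the coefficient of $s^{i+1}t^{j+1}$ in \eqref{eq:EE2} is literally $\lbrack E_{i\delta+\alpha_0},E_{(j+1)\delta}\rbrack-\lbrack E_{(i+1)\delta+\alpha_0},E_{j\delta}\rbrack_{q^2}$. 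Starting instead from Lemma \ref{lem:com2} is legitimate (the paper itself notes the two lemmas are alternative versions of each other), but it forces you to carry out the differencing/telescoping yourself, and that is precisely the step your sketch gets wrong.

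The concrete error is in your coefficient extraction for the third term: $(q^2-q^{-2})\,t\,E^-(q^{-2}t)E(t)$ contains no $s$ at all, so it contributes nothing to the coefficient of $s^it^j$ when $i\ge 1$, and in particular it does not contribute a single summand $(q^2-q^{-2})q^{-2i}E_{i\delta+\alpha_0}E_{(j-1)\delta}$ there. Its entire contribution sits in the $s^0t^j$ coefficient, where it produces the full convolution $(q^2-q^{-2})\sum_{n=0}^{j-1}q^{-2n}E_{n\delta+\alpha_0}E_{(j-1-n)\delta}$; this is where the sum $\sum_{\ell}$ of Lemma \ref{lem:com2} actually lives (the $s^0$ coefficient of \eqref{eq:EE2} is \eqref{eq:L8}, i.e.\ Lemma \ref{lem:com2} at $j=0$ once one uses $E_{0\delta}=-(q-q^{-1})^{-1}$). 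Consequently, for $i\ge 1$ the identity you must verify is the two-term difference relation of Lemma \ref{lem:altxx}, which follows from Lemma \ref{lem:com2} only after subtracting two adjacent instances of it, while the $i=0$ coefficient is a separate base case. The argument can be repaired along these lines, but the clean route is to cite Lemma \ref{lem:altxx}, whose relations match the coefficients of \eqref{eq:EE2}, \eqref{eq:EE1} on the nose, with the $s^0$ coefficient recovered by telescoping down to the scalar $E_{0\delta}$.
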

\begin{proof} These equations express Lemma \ref{lem:altxx}
in terms of generating functions.
   \end{proof}

  \begin{corollary} \label{prop:pbwRel} 
  For the algebra $U^+_q$,
  \begin{align} 
  \label{eq:L7}
  \lbrack W_0, E^-( t) \rbrack_q &= 
  (q-q^{-1})\bigl( E^-(t )\bigr)^2,
  \\  \label{eq:L8}
  \lbrack W_0, E( t) \rbrack_{q^2} &= 
  (q^2-q^{-2}) E^-(q^{-2}t )E(t),
  \\ \label{eq:L9}
  \lbrack E^+( t), W_1\rbrack_q &= 
  (q-q^{-1})\bigl( E^+(t )\bigr)^2,
  \\ \label{eq:L10}
  \lbrack E( t), W_1 \rbrack_{q^2} &= 
  (q^2-q^{-2}) E(t) E^+(q^{-2}t ).
  \end{align}
  \end{corollary}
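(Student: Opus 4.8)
The plan is to derive the four identities \eqref{eq:L7}--\eqref{eq:L10} directly as consequences of the generating-function relations already established in this section, avoiding any fresh computation with the underlying Damiani recursions. The key observation is that Corollary~\ref{prop:pbwRel} records commutators of the base generators $W_0=E^-(0)$, $W_1=E^+(0)$ against the generating functions $E^-(t)$, $E^+(t)$, $E(t)$, and each of these should fall out of the two-variable relations in Propositions~\ref{prop:New}, \ref{prop:GFwang}, and \ref{prop:wangGF} by specializing one variable to $0$.

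First I would treat \eqref{eq:L7}. I would take the relation \eqref{eq:cc3} of Proposition~\ref{prop:GFwang} and set $s=0$. Since $E^-(0)=W_0$, the left-hand terms collapse and, after collecting the surviving coefficients of $E^-(0)E^-(t)$ and $E^-(t)E^-(0)$ and the $\bigl(E^-(t)\bigr)^2$ term, the equation should rearrange precisely into $\lbrack W_0, E^-(t)\rbrack_q=(q-q^{-1})\bigl(E^-(t)\bigr)^2$. The symmetric identity \eqref{eq:L9} I would obtain identically from \eqref{eq:cc1} by setting $s=0$, using $E^+(0)=W_1$; alternatively, \eqref{eq:L9} follows from \eqref{eq:L7} by applying the antiautomorphism $\tau$ of Definition~\ref{def:tauA}, which by Lemma~\ref{lem:asym2} exchanges $E^-(t)\leftrightarrow E^+(t)$ and reverses products, so one only needs to check that $\tau$ carries the $q$-commutator form correctly.

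Next I would handle \eqref{eq:L8} and \eqref{eq:L10}. For \eqref{eq:L8} I would specialize \eqref{eq:EE2} of Proposition~\ref{prop:wangGF} by setting $s=0$. With $E^-(0)=W_0$, the relation \eqref{eq:EE2} becomes $0=-q^2t\,W_0 E(t)+q^{-2}t\,E(t)W_0+(q^2-q^{-2})t\,E^-(q^{-2}t)E(t)$; dividing by $t$ and rewriting $-q^2W_0E(t)+q^{-2}E(t)W_0=-\lbrack W_0,E(t)\rbrack_{q^2}$ yields exactly \eqref{eq:L8}. The companion \eqref{eq:L10} comes the same way from \eqref{eq:EE1} with $s=0$, or again by applying $\tau$ to \eqref{eq:L8} and using that $\tau$ fixes each $E_{n\delta}$ (so fixes $E(t)$) while swapping $W_0\leftrightarrow W_1$ and $E^-\leftrightarrow E^+$.

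I expect the only delicate point to be bookkeeping the $q$-power coefficients and the placement of the $q^{-2}t$ shift in the argument of $E^-$ (resp.\ $E^+$) when setting $s=0$, since one must confirm that the specialized coefficient $(q^2-q^{-2})t$ matches the claimed right-hand side after dividing through by $t$ and that no spurious factor of $q^{\pm1}$ survives. This is purely a matter of carefully tracking the definitions of $\lbrack\,,\rbrack_q$ and $\lbrack\,,\rbrack_{q^2}$; there is no structural obstacle, since all four identities are genuine $s=0$ specializations of relations proved earlier in the section.
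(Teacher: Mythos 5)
Your proposal is correct and coincides with the paper's own proof, which reads: ``Set $s=0$ in Propositions \ref{prop:GFwang}, \ref{prop:wangGF} and evaluate the results using \eqref{eq:zv}.'' The specializations you describe (e.g.\ $s=0$ in \eqref{eq:cc3} giving $qW_0E^-(t)-q^{-1}E^-(t)W_0=(q-q^{-1})(E^-(t))^2$ after dividing by $t/(q-q^{-1})$, and similarly for \eqref{eq:cc1}, \eqref{eq:EE2}, \eqref{eq:EE1}) check out exactly, and the alternative route via $\tau$ for \eqref{eq:L9}, \eqref{eq:L10} is also valid.
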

  \begin{proof} Set $s=0$  in  Propositions \ref{prop:GFwang}, \ref{prop:wangGF} and evaluate the results using \eqref{eq:zv}.
  \end{proof}

 \begin{remark}\label{rem:follow}
 \rm Lemmas 
\ref{lem:BPhi}, \ref{prop:pbwRelP} 
 and Corollary  \ref{prop:pbwRel} 
 follow from  \eqref{eq:zv}, 
 \eqref{eq:EsEt}
   and 
 Propositions   \ref{prop:New}, \ref{prop:GFwang}, \ref{prop:wangGF}. Indeed Lemma  \ref{prop:pbwRelP}  follows from  Proposition   \ref{prop:New} by setting $s=0$ or $t=0$, and evaluating the results using   \eqref{eq:zv}.
 Corollary  \ref{prop:pbwRel} follows from Propositions \ref{prop:GFwang}, \ref{prop:wangGF} by the proof of Corollary  \ref{prop:pbwRel}. Lemma \ref{lem:BPhi} follows from 
   \eqref{eq:L2},  \eqref{eq:L7},  \eqref{eq:L8}  along with
 \eqref{eq:EE2} at $s=t$.
 \end{remark}

 \section{The algebra $\mathcal U^+_q$}
   \noindent In the previous section we discussed the algebra $U^+_q$. In this section we discuss its alternating central extension $\mathcal U^+_q$.
  
\begin{definition}\rm
\label{def:Aq}
(See \cite[Definition~3.1]{altCE}.)
Define the algebra $\mathcal U^+_q$
by generators
\begin{align}
\label{eq:4gens}
\lbrace \mathcal W_{-k}\rbrace_{k\in \mathbb N}, \qquad  \lbrace \mathcal  W_{k+1}\rbrace_{k\in \mathbb N},\qquad  
 \lbrace \mathcal G_{k+1}\rbrace_{k\in \mathbb N},
\qquad
\lbrace \mathcal {\tilde G}_{k+1}\rbrace_{k\in \mathbb N}
\end{align}
 and the following relations. For $k, \ell \in \mathbb N$,
\begin{align}
&
 \lbrack \mathcal W_0, \mathcal W_{k+1}\rbrack= 
\lbrack \mathcal W_{-k}, \mathcal W_{1}\rbrack=
(1-q^{-2})({\mathcal{\tilde G}}_{k+1} - \mathcal G_{k+1}),
\label{eq:3p1}
\\
&
\lbrack \mathcal W_0, \mathcal G_{k+1}\rbrack_q= 
\lbrack {\mathcal{\tilde G}}_{k+1}, \mathcal W_{0}\rbrack_q= 
(q-q^{-1}) \mathcal W_{-k-1},
\label{eq:3p2}
\\
&
\lbrack \mathcal G_{k+1}, \mathcal W_{1}\rbrack_q= 
\lbrack \mathcal W_{1}, {\mathcal {\tilde G}}_{k+1}\rbrack_q= 
(q-q^{-1}) \mathcal W_{k+2},
\label{eq:3p3}
\\
&
\lbrack \mathcal W_{-k}, \mathcal W_{-\ell}\rbrack=0,  \qquad 
\lbrack \mathcal W_{k+1}, \mathcal W_{\ell+1}\rbrack= 0,
\label{eq:3p4}
\\
&
\lbrack \mathcal W_{-k}, \mathcal W_{\ell+1}\rbrack+
\lbrack \mathcal W_{k+1}, \mathcal W_{-\ell}\rbrack= 0,
\label{eq:3p5}
\\
&
\lbrack \mathcal W_{-k}, \mathcal G_{\ell+1}\rbrack+
\lbrack \mathcal G_{k+1}, \mathcal W_{-\ell}\rbrack= 0,
\label{eq:3p6}
\\
&
\lbrack \mathcal W_{-k}, {\mathcal {\tilde G}}_{\ell+1}\rbrack+
\lbrack {\mathcal {\tilde G}}_{k+1}, \mathcal W_{-\ell}\rbrack= 0,
\label{eq:3p7}
\\
&
\lbrack \mathcal W_{k+1}, \mathcal G_{\ell+1}\rbrack+
\lbrack \mathcal  G_{k+1}, \mathcal W_{\ell+1}\rbrack= 0,
\label{eq:3p8}
\\
&
\lbrack \mathcal W_{k+1}, {\mathcal {\tilde G}}_{\ell+1}\rbrack+
\lbrack {\mathcal {\tilde G}}_{k+1}, \mathcal W_{\ell+1}\rbrack= 0,
\label{eq:3p9}
\\
&
\lbrack \mathcal G_{k+1}, \mathcal G_{\ell+1}\rbrack=0,
\qquad 
\lbrack {\mathcal {\tilde G}}_{k+1}, {\mathcal {\tilde G}}_{\ell+1}\rbrack= 0,
\label{eq:3p10}
\\
&
\lbrack {\mathcal {\tilde G}}_{k+1}, \mathcal G_{\ell+1}\rbrack+
\lbrack \mathcal G_{k+1}, {\mathcal {\tilde G}}_{\ell+1}\rbrack= 0.
\label{eq:3p11}
\end{align}
The generators 
\eqref{eq:4gens} are called {\it alternating}. We call $\mathcal U^+_q$ the {\it alternating
central extension of $U^+_q$}. 
\noindent For notational convenience define
\begin{align}
{\mathcal G}_0 = 1, \qquad \qquad 
{\mathcal {\tilde G}}_0 = 1.
\label{eq:GG0}
\end{align}
\end{definition}

 \begin{remark}\rm
The relations in Definition \ref{def:Aq} resemble some relations
involving the $q$-Onsager algebra that were found earlier by Baseilhac and Shigechi
\cite[Definition~3.1]{basnc}; see also \cite{BK05}.
\end{remark}

\noindent Next we describe some symmetries of $\mathcal U^+_q$.
\begin{lemma}
\label{lem:autAc} {\rm (See \cite[Lemma~3.9]{altCE}.)} There exists an automorphism $\sigma$ of $\mathcal U^+_q$ that sends
\begin{align*}
\mathcal W_{-k} \mapsto \mathcal W_{k+1}, \qquad
\mathcal W_{k+1} \mapsto \mathcal W_{-k}, \qquad
\mathcal G_{k+1} \mapsto \mathcal {\tilde G}_{k+1}, \qquad
\mathcal {\tilde G}_{k+1} \mapsto \mathcal G_{k+1}
\end{align*}
 for $k \in \mathbb N$. Moreover $\sigma^2 = {\rm id}$.
\end{lemma}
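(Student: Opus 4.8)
The plan is to exploit the fact that, by Definition \ref{def:Aq}, the algebra $\mathcal U^+_q$ is presented by the generators \eqref{eq:4gens} subject to the relations \eqref{eq:3p1}--\eqref{eq:3p11}. Consequently, to construct an algebra homomorphism $\sigma:\mathcal U^+_q\to\mathcal U^+_q$ it suffices to declare the images of the generators by the stated formulas and to verify that these images satisfy every defining relation; the universal property of a presentation then extends $\sigma$ uniquely to an algebra endomorphism. I would first record the two facts that make the bookkeeping uniform: since $\sigma$ is to be an algebra homomorphism it preserves both the commutator and the $q$-commutator, i.e.\ $\sigma(\lbrack X,Y\rbrack)=\lbrack \sigma X,\sigma Y\rbrack$ and $\sigma(\lbrack X,Y\rbrack_q)=\lbrack \sigma X,\sigma Y\rbrack_q$, because the order of multiplication is respected. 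I would also note the consequences of the prescription on the relevant indices: in particular $\mathcal W_0\mapsto \mathcal W_1$, $\mathcal W_1\mapsto \mathcal W_0$, and $\mathcal W_{-(k+1)}\mapsto \mathcal W_{k+2}$, together with $\mathcal G_{k+1}\leftrightarrow \mathcal{\tilde G}_{k+1}$ and the convention \eqref{eq:GG0} which is preserved since $\mathcal G_0=\mathcal{\tilde G}_0=1$.

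The heart of the argument is the observation that applying the substitution to any one defining relation reproduces another defining relation, so that $\sigma$ merely permutes the families \eqref{eq:3p1}--\eqref{eq:3p11}. For instance, applying the substitution to $\lbrack \mathcal W_0,\mathcal W_{k+1}\rbrack=(1-q^{-2})(\mathcal{\tilde G}_{k+1}-\mathcal G_{k+1})$ yields $\lbrack \mathcal W_1,\mathcal W_{-k}\rbrack=(1-q^{-2})(\mathcal G_{k+1}-\mathcal{\tilde G}_{k+1})$; using antisymmetry of the commutator on the left and the sign flip coming from the $\mathcal G\leftrightarrow\mathcal{\tilde G}$ swap on the right, both sides change sign and one recovers exactly the second half of \eqref{eq:3p1}, so \eqref{eq:3p1} is mapped to itself. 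Similarly \eqref{eq:3p2} and \eqref{eq:3p3} are interchanged (the $q$-commutator relations defining $\mathcal W_{-k-1}$ map to those defining $\mathcal W_{k+2}$ and vice versa), the two halves of \eqref{eq:3p4} and of \eqref{eq:3p10} are interchanged, \eqref{eq:3p5} and \eqref{eq:3p11} are each mapped to themselves, and the pairs \eqref{eq:3p6}$\leftrightarrow$\eqref{eq:3p9}, \eqref{eq:3p7}$\leftrightarrow$\eqref{eq:3p8} are interchanged. In every case the computation is the same mechanical substitution followed by an application of commutator antisymmetry and the $\mathcal G\leftrightarrow\mathcal{\tilde G}$ sign, so I would verify one representative from each orbit and remark that the others are entirely analogous.

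Having established that all defining relations are preserved, $\sigma$ is a well-defined algebra endomorphism of $\mathcal U^+_q$. To finish, I would check that $\sigma^2$ fixes each generator: $\mathcal W_{-k}\mapsto\mathcal W_{k+1}\mapsto\mathcal W_{-k}$, $\mathcal G_{k+1}\mapsto\mathcal{\tilde G}_{k+1}\mapsto\mathcal G_{k+1}$, and likewise for the remaining families. Since $\sigma^2$ is an algebra homomorphism that fixes a generating set, $\sigma^2={\rm id}$; in particular $\sigma$ is a bijection with inverse itself, hence an automorphism. The only genuine obstacle is the sign bookkeeping in the second step: one must keep careful track of the interaction between the antisymmetry of $\lbrack\,,\,\rbrack$ and the sign change produced by $\mathcal G_{k+1}\leftrightarrow\mathcal{\tilde G}_{k+1}$, especially in the self-dual relations \eqref{eq:3p1}, \eqref{eq:3p5}, \eqref{eq:3p11}, where the two effects must cancel to return the relation to itself rather than to its negative.
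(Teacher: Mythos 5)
Your proof is correct and is exactly the standard argument: the paper itself defers to the citation \cite[Lemma~3.9]{altCE}, and the intended verification there is precisely this presentation-based check that the substitution permutes the defining relations \eqref{eq:3p1}--\eqref{eq:3p11} (your bookkeeping of which relations are fixed and which are swapped is accurate), followed by the observation that $\sigma^2$ fixes the generators. No gaps.
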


\begin{lemma}\label{lem:antiautAc} {\rm (See \cite[Lemma~3.9]{altCE}.)} There exists an antiautomorphism $\dagger$ of $\mathcal U^+_q$ that sends
\begin{align*}
\mathcal W_{-k} \mapsto \mathcal W_{-k}, \qquad
\mathcal W_{k+1} \mapsto \mathcal W_{k+1}, \qquad
\mathcal G_{k+1} \mapsto \mathcal {\tilde G}_{k+1}, \qquad
\mathcal {\tilde G}_{k+1} \mapsto \mathcal G_{k+1}
\end{align*}
for $k \in \mathbb N$. Moreover $\dagger^2={\rm id}$.
\end{lemma}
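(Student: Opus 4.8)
The final statement to prove is Lemma~\ref{lem:antiautAc}, which asserts the existence of an antiautomorphism $\dagger$ of $\mathcal U^+_q$ acting as the displayed involution on the alternating generators, with $\dagger^2 = \mathrm{id}$.

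The plan is to use the universal (generators-and-relations) presentation of $\mathcal U^+_q$ from Definition~\ref{def:Aq}. Since $\dagger$ is to be an antiautomorphism, the clean way to construct it is to observe that an algebra map $\mathcal U^+_q \to (\mathcal U^+_q)^{\rm opp}$ is exactly an ordinary algebra homomorphism out of the presented algebra, and such a homomorphism exists as soon as the proposed images of the generators satisfy the defining relations \eqref{eq:3p1}--\eqref{eq:3p11} \emph{as computed in the opposite algebra}. So first I would define a candidate map $\dagger$ on the free algebra on the symbols \eqref{eq:4gens} by the prescribed assignment $\mathcal W_{-k}\mapsto \mathcal W_{-k}$, $\mathcal W_{k+1}\mapsto \mathcal W_{k+1}$, $\mathcal G_{k+1}\mapsto \mathcal{\tilde G}_{k+1}$, $\mathcal{\tilde G}_{k+1}\mapsto \mathcal G_{k+1}$, declared to reverse the order of products. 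To show this descends to $\mathcal U^+_q$, it suffices to check that $\dagger$ sends each defining relation to a consequence of the defining relations.

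The mechanical point that makes this routine is that $\dagger$ reverses products, hence for any $X,Y$ it sends $\lbrack X,Y\rbrack = XY-YX$ to $Y^\dagger X^\dagger - X^\dagger Y^\dagger = \lbrack Y^\dagger, X^\dagger\rbrack = -\lbrack X^\dagger, Y^\dagger\rbrack$, and similarly sends the $q$-commutator $\lbrack X,Y\rbrack_q = qXY - q^{-1}YX$ to $qY^\dagger X^\dagger - q^{-1}X^\dagger Y^\dagger = \lbrack Y^\dagger, X^\dagger\rbrack_q$. So I would proceed relation by relation, applying $\dagger$ and these two rules, then identifying the result with another relation in the list (after relabeling indices and, where necessary, using the involution $\mathcal G \leftrightarrow \mathcal{\tilde G}$). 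For instance, applying $\dagger$ to \eqref{eq:3p2}, $\lbrack \mathcal W_0, \mathcal G_{k+1}\rbrack_q = (q-q^{-1})\mathcal W_{-k-1}$ becomes $\lbrack \mathcal{\tilde G}_{k+1}, \mathcal W_0\rbrack_q = (q-q^{-1})\mathcal W_{-k-1}$, which is precisely the second equality of \eqref{eq:3p2}; the two halves of \eqref{eq:3p2} are thereby swapped, and likewise for \eqref{eq:3p3}. The symmetric pairs \eqref{eq:3p4}--\eqref{eq:3p11} are handled the same way: each becomes a sum of two commutators whose sign and index pattern match an existing relation, with \eqref{eq:3p10} and \eqref{eq:3p11} using that $\dagger$ interchanges $\mathcal G$ and $\mathcal{\tilde G}$. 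I would verify that \eqref{eq:3p1} is fixed (up to the $\mathcal G \leftrightarrow \mathcal{\tilde G}$ swap on its right-hand side, which matches since $\mathcal{\tilde G}_{k+1} - \mathcal G_{k+1}$ maps to $\mathcal G_{k+1} - \mathcal{\tilde G}_{k+1} = -(\mathcal{\tilde G}_{k+1}-\mathcal G_{k+1})$ and the left side also flips sign).

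The only real subtlety, and the step I would treat most carefully, is the bookkeeping of signs and index shifts: several relations are self-paired under $\dagger$ only after a sign flip on both sides, so I must confirm the flips are consistent rather than producing a spurious new relation. Once every relation in \eqref{eq:3p1}--\eqref{eq:3p11} is checked to map into the ideal of relations, the universal property yields a well-defined algebra homomorphism $\dagger:\mathcal U^+_q \to (\mathcal U^+_q)^{\rm opp}$, i.e.\ an antiautomorphism. Finally, $\dagger^2$ is an automorphism fixing every generator (since the $\mathcal G \leftrightarrow \mathcal{\tilde G}$ swap is an involution and the $\mathcal W$'s are fixed), so $\dagger^2 = \mathrm{id}$ by the same universal property, which also confirms $\dagger$ is bijective. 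Since the statement cites \cite[Lemma~3.9]{altCE}, I would note that this verification is exactly the content of that reference.
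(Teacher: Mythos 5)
Your proposal is correct and is exactly the standard argument: the paper itself offers no proof beyond citing \cite[Lemma~3.9]{altCE}, and the content of that reference is precisely the relation-by-relation verification you describe (an algebra map to the opposite algebra defined on generators, checked against \eqref{eq:3p1}--\eqref{eq:3p11} using that $\dagger$ sends $\lbrack X,Y\rbrack_q$ to $\lbrack Y^\dagger,X^\dagger\rbrack_q$, followed by $\dagger^2=\mathrm{id}$ on generators). Your sign bookkeeping on \eqref{eq:3p1} and the pairing of \eqref{eq:3p6} with \eqref{eq:3p7} are both correct.
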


\begin{lemma} \label{lem:sdcomAc}  
The maps $\sigma$, $\dagger $ commute.
\end{lemma}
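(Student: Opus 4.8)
The plan is to verify directly that the composition $\sigma \circ \dagger$ equals $\dagger \circ \sigma$ as maps on $\mathcal U^+_q$. Since both $\sigma$ and $\dagger$ are determined by their action on the alternating generators \eqref{eq:4gens} (because these generate the algebra), it suffices to check that the two composite maps agree on each generator. Because $\sigma$ is an algebra automorphism and $\dagger$ is an algebra antiautomorphism, each composite is an antiautomorphism, and an antiautomorphism is determined by its values on a generating set; hence agreement on generators forces agreement everywhere.

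First I would compute $\sigma \circ \dagger$ on a generic generator by applying $\dagger$ first (using Lemma \ref{lem:antiautAc}) and then $\sigma$ (using Lemma \ref{lem:autAc}). For example, on $\mathcal G_{k+1}$ the map $\dagger$ gives $\mathcal {\tilde G}_{k+1}$, and then $\sigma$ sends this to $\mathcal G_{k+1}$; so $\sigma \circ \dagger$ fixes $\mathcal G_{k+1}$. Next I would compute $\dagger \circ \sigma$ on the same generator: $\sigma$ sends $\mathcal G_{k+1}$ to $\mathcal {\tilde G}_{k+1}$, and then $\dagger$ sends this back to $\mathcal G_{k+1}$; so $\dagger \circ \sigma$ also fixes $\mathcal G_{k+1}$. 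I would carry out the analogous two-line check for each of the four families $\mathcal W_{-k}$, $\mathcal W_{k+1}$, $\mathcal G_{k+1}$, $\mathcal {\tilde G}_{k+1}$ and observe that the results coincide in every case.

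The only point requiring care is the interaction between the automorphism/antiautomorphism property and the order-reversal, but this is automatic here: since we are comparing the two composites solely by their values on single generators, the multiplicativity (or anti-multiplicativity) plays no role in the verification and there is no genuine obstacle. The computation is entirely mechanical, consisting of tracking where each generator goes under the two prescribed index-shuffling rules. Because these tracking computations are short and symmetric, I expect no difficulty and the statement follows immediately; this is why the verification can be dispatched with the phrase \emph{this is readily checked}, exactly as anticipated by the pattern of the corresponding result (Definition \ref{def:tauA}) for $U^+_q$.
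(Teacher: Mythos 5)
Your proposal is correct and takes the same route as the paper, whose proof is simply the phrase ``This is readily checked'': one verifies that the two composites, both antiautomorphisms and hence determined by their values on the alternating generators, agree on each of the four generator families via the index rules of Lemmas \ref{lem:autAc} and \ref{lem:antiautAc}. Your generator-by-generator tracking is exactly the intended verification, and the observation that multiplicativity versus anti-multiplicativity plays no role once one compares only values on single generators is the right justification for why the check is so short.
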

\begin{proof} This is readily checked.
\end{proof}

\begin{definition}\label{def:tauAc}\rm Let $\tau$ denote the composition of the automorphism $\sigma$ from Lemma \ref{lem:autAc} and the antiautomorphism $\dagger$ from Lemma \ref{lem:antiautAc}. 
 Note that $\tau$ is an antiautomorphism of $\mathcal U^+_q$ that sends
\begin{align*}
\mathcal W_{-k} \mapsto \mathcal W_{k+1}, \qquad
\mathcal W_{k+1} \mapsto \mathcal W_{-k}, \qquad
\mathcal G_{k+1} \mapsto \mathcal G_{k+1}, \qquad
\mathcal {\tilde G}_{k+1} \mapsto \mathcal {\tilde G}_{k+1}
\end{align*}
\noindent for $k \in \mathbb N$. We have $\tau^2={\rm id}$.
\end{definition}

\noindent Next we discuss how $\mathcal U^+_q$ is related to $U^+_q$.

\begin{lemma}
\label{lem:iota}  {\rm (See \cite[Proposition~6.4]{altCE}.)}
There exists an algebra homomorphism $\imath: U^+_q \to \mathcal U^+_q$ that sends $W_0 \mapsto \mathcal W_0$ and
$W_1 \mapsto \mathcal W_1$. Moreover, $\imath$ is injective.
\end{lemma}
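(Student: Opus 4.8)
The plan is to split the statement into two parts: the existence of the homomorphism $\imath$, and its injectivity. For existence I would invoke the presentation of $U^+_q$ from Definition \ref{def:U}. Since $U^+_q$ is the algebra on generators $W_0, W_1$ subject only to the two $q$-Serre relations \eqref{eq:qOns1}, \eqref{eq:qOns2}, there is an algebra homomorphism $\imath\colon U^+_q \to \mathcal U^+_q$ with $W_0 \mapsto \mathcal W_0$ and $W_1 \mapsto \mathcal W_1$ if and only if the elements $\mathcal W_0, \mathcal W_1$ of $\mathcal U^+_q$ satisfy the $q$-Serre relations, namely $\lbrack \mathcal W_0, \lbrack \mathcal W_0, \lbrack \mathcal W_0, \mathcal W_1\rbrack_q\rbrack_{q^{-1}}\rbrack = 0$ together with its companion obtained by interchanging $\mathcal W_0$ and $\mathcal W_1$. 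Thus the entire burden of existence is to check these two identities inside $\mathcal U^+_q$.

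To check them, the cleanest route is through the isomorphism $\varphi\colon \mathcal U^+_q \to U^+_q \otimes \mathbb F\lbrack z_1, z_2, \ldots\rbrack$ recalled in the Introduction, which sends $\mathcal W_0 \mapsto W_0 \otimes 1$ and $\mathcal W_1 \mapsto W_1 \otimes 1$. In the tensor factor $U^+_q$ the generators $W_0, W_1$ satisfy the $q$-Serre relations by definition, so $W_0 \otimes 1$ and $W_1 \otimes 1$ satisfy them in $U^+_q \otimes \mathbb F\lbrack z_1, z_2, \ldots\rbrack$; transporting these identities back through $\varphi^{-1}$ yields the required relations for $\mathcal W_0, \mathcal W_1$ in $\mathcal U^+_q$. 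One may alternatively verify the two $q$-Serre relations by hand, using only the $k=0$ instances of \eqref{eq:3p1}--\eqref{eq:3p3} to reorder $\mathcal W_0, \mathcal W_1$ through the auxiliary generators $\mathcal G_1, \mathcal {\tilde G}_1, \mathcal W_{-1}, \mathcal W_2$; this is a finite but bookkeeping-heavy reduction, and it is the reason I prefer the $\varphi$ argument.

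For injectivity I would compose $\imath$ with the surjective homomorphism $\pi\colon \mathcal U^+_q \to U^+_q$ recorded in the Introduction, which sends $\mathcal W_0 \mapsto W_0$ and $\mathcal W_1 \mapsto W_1$. The composite $\pi \circ \imath\colon U^+_q \to U^+_q$ then fixes each of the generators $W_0, W_1$, and since these generate $U^+_q$ we obtain $\pi \circ \imath = {\rm id}$. A homomorphism possessing a left inverse is injective, so $\imath$ is injective. The only substantive step in the whole argument is the verification of the $q$-Serre relations for $\mathcal W_0, \mathcal W_1$; once that is secured, both the construction of $\imath$ via the universal property and its injectivity via the left inverse are purely formal. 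Accordingly the main obstacle is simply to ensure that $\varphi$ is available and acts on $\mathcal W_0, \mathcal W_1$ exactly as stated, which is what reduces the verification to the triviality that $W_0, W_1$ satisfy the $q$-Serre relations in $U^+_q$ itself.
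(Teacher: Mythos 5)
Your argument is correct. Note, however, that the paper does not actually prove this lemma: it simply imports it as \cite[Proposition~6.4]{altCE}, so there is no in-paper proof to match. What you have done is reassemble a proof from two other facts that the paper also imports from \cite{altCE}, namely the isomorphism $\varphi:\mathcal U^+_q\to U^+_q\otimes\mathbb F[z_1,z_2,\ldots]$ with $\varphi(\mathcal W_0)=W_0\otimes 1$, $\varphi(\mathcal W_1)=W_1\otimes 1$, and the surjection $\pi:\mathcal U^+_q\to U^+_q$ determined on the alternating generators. Both steps are sound: the universal property of the presentation in Definition~\ref{def:U} reduces existence to checking the $q$-Serre relations for $\mathcal W_0,\mathcal W_1$, and applying the injective map $\varphi$ turns that check into the tautology that $W_0\otimes 1$, $W_1\otimes 1$ satisfy them; the identity $\pi\circ\imath={\rm id}$ on generators then gives injectivity. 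A marginally more economical variant is to observe that $\varphi\circ\imath$ is the map $u\mapsto u\otimes 1$, which is manifestly injective, so $\varphi$ alone handles both existence and injectivity without invoking $\pi$. The only caveat is logical bookkeeping: your proof is legitimate provided that, in \cite{altCE}, the isomorphism $\varphi$ is established independently of the proposition you are proving (it is --- $\varphi$ comes from Lemma~3.6 and Theorem~5.17 there, prior to Proposition~6.4), so there is no circularity.
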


\begin{lemma} 
\label{lem:diag} 
The following diagrams commute:

\begin{equation*}
{\begin{CD}
U^+_q @>\imath  >> \mathcal U^+_q
              \\
         @V \sigma VV                   @VV \sigma V \\
         U^+_q @>>\imath >
                                 \mathcal U^+_q
                        \end{CD}}  	
         \qquad \qquad                
    {\begin{CD}
U^+_q @>\imath  >> \mathcal U^+_q
              \\
         @V \dagger VV                   @VV \dagger V \\
         U^+_q @>>\imath >
                                 \mathcal U^+_q
                        \end{CD}}  	     \qquad \qquad                
   {\begin{CD}
U^+_q @>\imath  >> \mathcal U^+_q
              \\
         @V \tau VV                   @VV \tau V \\
         U^+_q @>>\imath >
                                 \mathcal U^+_q
                        \end{CD}}  	                                       		    
\end{equation*}
\end{lemma}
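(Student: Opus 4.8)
The final statement to prove is Lemma~\ref{lem:diag}, which asserts that the three squares involving the injection $\imath:U^+_q\to\mathcal U^+_q$ and the symmetries $\sigma$, $\dagger$, $\tau$ commute. The plan is to verify commutativity square by square, reducing each to a check on the algebra generators $W_0,W_1$ of $U^+_q$, since two algebra homomorphisms (or an algebra homomorphism and an antihomomorphism, composed appropriately) that agree on a generating set agree everywhere.

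First I would treat the $\sigma$-square. Both composites $\sigma\circ\imath$ and $\imath\circ\sigma$ are algebra homomorphisms $U^+_q\to\mathcal U^+_q$, so it suffices to check they agree on $W_0$ and $W_1$. Tracing $W_0$: the left-down path sends $W_0\xmapsto{\sigma}W_1\xmapsto{\imath}\mathcal W_1$, while the top-right path sends $W_0\xmapsto{\imath}\mathcal W_0\xmapsto{\sigma}\mathcal W_1$, using the action of $\sigma$ on $\mathcal U^+_q$ from Lemma~\ref{lem:autAc} (namely $\mathcal W_{-0}=\mathcal W_0\mapsto\mathcal W_1$). These agree, and symmetrically for $W_1$, so the square commutes. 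For the $\dagger$-square, both composites are antiautomorphisms into $\mathcal U^+_q$ (an antihomomorphism precomposed or postcomposed with a homomorphism is an antihomomorphism), and one again checks agreement on generators: $\dagger$ on $U^+_q$ fixes $W_0,W_1$ by Lemma~\ref{lem:antiaut}, while $\dagger$ on $\mathcal U^+_q$ fixes $\mathcal W_0,\mathcal W_1$ by Lemma~\ref{lem:antiautAc}, so both paths send $W_0\mapsto\mathcal W_0$ and $W_1\mapsto\mathcal W_1$. The $\tau$-square then follows immediately, since $\tau=\dagger\circ\sigma$ on each algebra (Definitions~\ref{def:tauA}, \ref{def:tauAc}) and the first two squares paste together; alternatively one checks directly that both composites send $W_0\leftrightarrow\mathcal W_1$ and $W_1\leftrightarrow\mathcal W_0$.

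The one genuine subtlety — and the only place any real care is needed — is that verifying agreement on generators is legitimate only because $U^+_q$ is generated by $W_0,W_1$; this justifies the reduction from "equal as maps" to "equal on $W_0,W_1$." For the homomorphism square this is the standard universal-property argument; for the antihomomorphism squares one must note that the relevant composites are genuinely antihomomorphisms $U^+_q\to\mathcal U^+_q$ and that two such maps agreeing on an algebra-generating set coincide. I expect no real obstacle here: every ingredient is an explicit generator computation against the tables in Lemmas~\ref{lem:autAc} and~\ref{lem:antiautAc}, so the proof is the short remark that the diagrams commute because the relevant maps agree on $W_0$ and $W_1$, which is readily checked. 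Accordingly I would write:

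\begin{proof}
Since $U^+_q$ is generated by $W_0,W_1$, it suffices in each case to check that the two composites agree on $W_0$ and $W_1$. This is routine using Lemmas~\ref{lem:autAc}, \ref{lem:antiautAc} together with the fact that $\imath$ sends $W_0\mapsto\mathcal W_0$ and $W_1\mapsto\mathcal W_1$.
\end{proof}
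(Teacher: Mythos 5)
Your proposal is correct and matches the paper's own proof, which likewise chases the generators $W_0$, $W_1$ around each diagram using Lemmas \ref{lem:aut}, \ref{lem:antiaut}, \ref{lem:autAc}, \ref{lem:antiautAc} and Definitions \ref{def:tauA}, \ref{def:tauAc}. Your extra remark that the composites in the $\dagger$- and $\tau$-squares are antihomomorphisms (so agreement on generators still suffices) is a sound refinement of the same argument.
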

\begin{proof} Chase the $U^+_q$-generators $W_0$, $W_1$ around each diagram, using
Lemmas  \ref{lem:aut}, \ref{lem:antiaut}
and Definition \ref{def:tauA}
along with Lemmas \ref{lem:autAc}, \ref{lem:antiautAc}
and Definition \ref{def:tauAc}.
\end{proof}

\section{Generating functions for $\mathcal U^+_q$}

\noindent In Definition \ref{def:Aq} the algebra $\mathcal U^+_q$ is defined by generators and relations.
In this section we describe the defining relations in terms of generating functions.

\begin{definition}
\label{def:gf4}
\rm (See \cite[Definition~A.1]{altCE}.)
We define some generating functions in the indeterminate $t$:
\begin{align*}
&\mathcal W^-(t) = \sum_{n \in \mathbb N} \mathcal W_{-n} t^n,
\qquad \qquad  
\mathcal W^+(t) = \sum_{n \in \mathbb N} \mathcal W_{n+1} t^n,
\\
&\mathcal G(t) = \sum_{n \in \mathbb N} \mathcal G_n t^n,
\qquad \qquad \qquad 
\mathcal {\tilde G}(t) = \sum_{n \in \mathbb N} \mathcal {\tilde G}_n t^n.
\end{align*}
\end{definition}
\noindent Observe that
\begin{align*}
\mathcal W^-(0) = \mathcal W_0, \qquad
\mathcal W^+(0) = \mathcal W_1, \qquad
\mathcal G(0) = 1, \qquad
\mathcal {\tilde G}(0) = 1.
\end{align*}

\noindent  We now give the relations \eqref{eq:3p1}--\eqref{eq:3p11}  in terms of generating functions. 

\begin{lemma} \label{lem:ad} {\rm (See \cite[Lemmas~A.2, A.3]{altCE}.)}
 For the algebra $\mathcal U^+_q$,
\begin{align}
& \label{eq:3pp1}
\lbrack \mathcal W_0, \mathcal W^+(t) \rbrack = \lbrack \mathcal W^-(t), \mathcal W_1 \rbrack = (1-q^{-2})t^{-1}(\mathcal {\tilde G}(t)-\mathcal G(t)),
\\
& \label{eq:3pp2}
\lbrack \mathcal W_0, \mathcal G(t) \rbrack_q = \lbrack \mathcal {\tilde G}(t), \mathcal W_0 \rbrack_q = (q-q^{-1}) \mathcal W^-(t),
\\
&\label{eq:3pp3}
\lbrack \mathcal G(t), \mathcal W_1 \rbrack_q = \lbrack \mathcal W_1, \mathcal {\tilde G}(t) \rbrack_q = (q-q^{-1}) \mathcal W^+(t),
\\
&\label{eq:3pp4}
\lbrack  \mathcal W^-(s), \mathcal W^-(t) \rbrack = 0, 
\qquad 
\lbrack \mathcal W^+(s),  \mathcal W^+(t) \rbrack = 0,
\\ \label{eq:3pp5}
&\lbrack  \mathcal W^-(s), \mathcal W^+(t) \rbrack 
+
\lbrack \mathcal W^+(s), \mathcal W^-(t) \rbrack = 0,
\\ \label{eq:3pp6}
&s \lbrack \mathcal W^-(s), \mathcal G(t) \rbrack 
+
t \lbrack  \mathcal G(s),  \mathcal W^-(t) \rbrack = 0,
\\ \label{eq:3pp7}
&s \lbrack  \mathcal W^-(s), \mathcal {\tilde G}(t) \rbrack 
+
t \lbrack  \mathcal {\tilde G}(s), \mathcal W^-(t) \rbrack = 0,
\\ \label{eq:3pp8}
&s \lbrack   \mathcal W^+(s),  \mathcal G(t) \rbrack
+
t \lbrack   \mathcal G(s), \mathcal W^+(t) \rbrack = 0,
\\ \label{eq:3pp9}
&s \lbrack   \mathcal W^+(s), \mathcal {\tilde G}(t) \rbrack
+
t \lbrack \mathcal {\tilde G}(s), \mathcal W^+(t) \rbrack = 0,
\\ \label{eq:3pp10}
&\lbrack   \mathcal G(s), \mathcal G(t) \rbrack = 0, 
\qquad 
\lbrack  \mathcal {\tilde G}(s),  \mathcal {\tilde G}(t) \rbrack = 0,
\\ \label{eq:3pp11}
&\lbrack  \mathcal {\tilde G}(s), \mathcal G(t) \rbrack +
\lbrack   \mathcal G(s), \mathcal {\tilde G}(t) \rbrack = 0.
\end{align}
\end{lemma}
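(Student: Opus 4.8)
The plan is to prove each of the generating-function identities \eqref{eq:3pp1}--\eqref{eq:3pp11} by expanding both sides as formal power series in $s$ and $t$ and matching the coefficient of each monomial against the corresponding scalar relation \eqref{eq:3p1}--\eqref{eq:3p11} from Definition \ref{def:Aq}. Since each generating function is a series $\sum_n(\cdot)t^n$ whose coefficients are alternating generators, the bracket of two generating functions is again a generating function whose coefficients are brackets of generators; thus the whole lemma reduces to bookkeeping of indices together with the convention $\mathcal G_0 = \mathcal{\tilde G}_0 = 1$ from \eqref{eq:GG0}. I would organize the proof as a single coefficient-comparison argument applied family by family.

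First I would dispose of the single-variable relations \eqref{eq:3pp1}--\eqref{eq:3pp3}. For \eqref{eq:3pp1}, expanding $\lbrack \mathcal W_0, \mathcal W^+(t)\rbrack = \sum_{k\in\mathbb N}\lbrack \mathcal W_0,\mathcal W_{k+1}\rbrack t^k$ and substituting \eqref{eq:3p1} gives $(1-q^{-2})\sum_k(\mathcal{\tilde G}_{k+1}-\mathcal G_{k+1})t^k$; reindexing by $m=k+1$ and using $\mathcal G_0=\mathcal{\tilde G}_0=1$ to see that the $m=0$ term cancels produces exactly $(1-q^{-2})t^{-1}(\mathcal{\tilde G}(t)-\mathcal G(t))$, which is the source of the factor $t^{-1}$. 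For \eqref{eq:3pp2} and \eqref{eq:3pp3} no shift is needed: the coefficient of $t^n$ on the left is $\lbrack \mathcal W_0,\mathcal G_n\rbrack_q$, which for $n\geq 1$ equals $(q-q^{-1})\mathcal W_{-n}$ by \eqref{eq:3p2}, while the $n=0$ case reduces to $\lbrack \mathcal W_0,1\rbrack_q=(q-q^{-1})\mathcal W_0$, matching the $n=0$ coefficient of $(q-q^{-1})\mathcal W^-(t)$ precisely because $\mathcal G_0=1$.

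Next I would treat the two-variable relations. For \eqref{eq:3pp4}, \eqref{eq:3pp5}, \eqref{eq:3pp10}, \eqref{eq:3pp11} the coefficient of $s^m t^n$ is directly the corresponding scalar relation \eqref{eq:3p4}, \eqref{eq:3p5}, \eqref{eq:3p10}, \eqref{eq:3p11}; in those involving $\mathcal G,\mathcal{\tilde G}$ the boundary coefficients with $m=0$ or $n=0$ vanish automatically, since a bracket with $\mathcal G_0=\mathcal{\tilde G}_0=1$ is zero. The relations \eqref{eq:3pp6}--\eqref{eq:3pp9} require a little more care: here the prefactors $s$ and $t$ are essential. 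For \eqref{eq:3pp6}, multiplying by $s$ and $t$ respectively shifts the $\mathcal W^-$ index so that the coefficient of $s^{k+1}t^{\ell+1}$ becomes $\lbrack \mathcal W_{-k},\mathcal G_{\ell+1}\rbrack+\lbrack \mathcal G_{k+1},\mathcal W_{-\ell}\rbrack$, which vanishes by \eqref{eq:3p6}; the coefficients with a zero exponent in $s$ or $t$ again involve $\mathcal G_0=1$ and so vanish identically.

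The argument is essentially routine, and the only real obstacle is keeping the index shifts and the boundary conventions $\mathcal G_0=\mathcal{\tilde G}_0=1$ consistent across the four families of generators. In particular, the subtle point is recognizing why the prefactors $s,t$ must appear in \eqref{eq:3pp6}--\eqref{eq:3pp9} but not in \eqref{eq:3pp4}, \eqref{eq:3pp5}, \eqref{eq:3pp10}, \eqref{eq:3pp11}: they are exactly what is needed to convert the scalar relations indexed by $k+1,\ell+1$ into coefficients indexed by $k,\ell$ while forcing the spurious boundary terms to lie in the kernel of bracketing with the identity.
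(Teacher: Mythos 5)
Your proof is correct: the paper gives no argument of its own for this lemma (it simply cites \cite[Lemmas~A.2, A.3]{altCE}), and your coefficient-by-coefficient comparison with the defining relations \eqref{eq:3p1}--\eqref{eq:3p11}, including the index shift producing the $t^{-1}$ in \eqref{eq:3pp1}, the role of the prefactors $s,t$ in \eqref{eq:3pp6}--\eqref{eq:3pp9}, and the vanishing of boundary terms via $\mathcal G_0=\mathcal{\tilde G}_0=1$, is exactly the standard verification carried out in the cited reference. No gaps.
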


\section{ A PBW basis for $\mathcal U^+_q$}

\noindent In \cite[Theorem~10.2]{altCE}  a PBW basis for $\mathcal U^+_q$ is obtained from the alternating generators in a certain linear order; see 
Appendix A below.
In the present section
we modify the  linear order to get a new PBW basis for $\mathcal U^+_q$ that is better suited to our purpose.
  For the new PBW basis we display the corresponding reduction rules.


\begin{definition}
\label{def:2p} \rm
Let $L$ denote the subalgebra of $\mathcal U^+_q$ generated by $\lbrace \mathcal W_{-k}\rbrace_{k\in \mathbb N}$, $\lbrace \mathcal G_{k+1}\rbrace_{k\in \mathbb N}$.
Let $R$ denote the subalgebra of $\mathcal U^+_q$ generated by $\lbrace \mathcal W_{k+1}\rbrace_{k\in \mathbb N}$, $\lbrace \mathcal {\tilde G}_{k+1}\rbrace_{k\in \mathbb N}$.
\end{definition}

\begin{lemma} \label{lem:XPBW} The following {\rm (i)--(iii)} hold for the subalgebras $L$ and $R$:
\begin{enumerate}
\item[\rm (i)]
a PBW basis for $L$ is obtained by the elements $\lbrace \mathcal W_{-i}\rbrace_{i\in \mathbb N}$, $\lbrace \mathcal G_{j+1}\rbrace_{j\in \mathbb N}$
in any linear order such that $\mathcal W_{-i} < \mathcal G_{j+1}$ for  $i,j \in \mathbb N$;
\item[\rm (ii)]  a PBW basis for $R$ is obtained by the elements $\lbrace \mathcal {\tilde G}_{k+1}\rbrace_{k\in \mathbb N}$, $\lbrace \mathcal W_{\ell+1}\rbrace_{\ell\in \mathbb N}$
in any linear order such that
$\mathcal {\tilde G}_{k+1} < \mathcal W_{\ell+1}$ for  $k, \ell \in \mathbb N$;
\item[\rm (iii)] the multiplication map
\begin{align*}
L
\otimes
R
 & \to   \mathcal U^+_q
\\
 l \otimes r  &\mapsto  l r
 \end{align*}
is an isomorphism of vector spaces.
\end{enumerate}
\end{lemma}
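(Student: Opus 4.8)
The plan is to prove Lemma~\ref{lem:XPBW} by leveraging the known PBW basis \eqref{eq:PBW4} from \cite[Theorem~10.2]{altCE}, together with the generating-function relations from Lemma~\ref{lem:ad}. First I would establish part~(i). The relations \eqref{eq:3p2}, \eqref{eq:3p4}, \eqref{eq:3p6}, and \eqref{eq:3p10} show that the generators of $L$ close up among themselves: the $q$-commutator $\lbrack \mathcal W_0, \mathcal G_{k+1}\rbrack_q$ produces $\mathcal W_{-k-1}$, which is again a generator of $L$, while the $\mathcal W_{-k}$ mutually commute, the $\mathcal G_{k+1}$ mutually commute, and \eqref{eq:3p6} lets one interchange a $\mathcal G$ with a $\mathcal W^-$ at the cost of lower-index terms. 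These are exactly the reduction rules needed to sort any word in the generators of $L$ into the prescribed order $\mathcal W_{-i} < \mathcal G_{j+1}$; this gives that the claimed monomials span $L$. Part~(ii) is then immediate by applying the automorphism $\sigma$ of Lemma~\ref{lem:autAc}, which sends $\mathcal W_{-k}\mapsto \mathcal W_{k+1}$, $\mathcal G_{k+1}\mapsto \mathcal{\tilde G}_{k+1}$ and hence carries $L$ isomorphically onto $R$, transporting the PBW basis of~(i) to the claimed PBW basis of~(ii).

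For linear independence in~(i) and~(ii), and for the multiplication statement~(iii), I would compare against the ambient PBW basis \eqref{eq:PBW4}. That basis orders the generators as $\lbrace \mathcal W_{-k}\rbrace$, $\lbrace \mathcal G_{k+1}\rbrace$, $\lbrace \mathcal{\tilde G}_{k+1}\rbrace$, $\lbrace \mathcal W_{k+1}\rbrace$. A product $l\,r$ with $l$ a sorted $L$-monomial (in the order of~(i)) and $r$ a sorted $R$-monomial (in the order of~(ii)) is precisely a concatenation $\mathcal W_{-\ast}\cdots \mathcal G_{\ast}\cdots \mathcal{\tilde G}_{\ast}\cdots \mathcal W_{\ast}$, which is exactly a basis monomial in \eqref{eq:PBW4}. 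Thus the map $l\otimes r\mapsto lr$ sends the tensor-product basis $\{\text{sorted }L\text{-monomials}\}\otimes\{\text{sorted }R\text{-monomials}\}$ bijectively onto the PBW basis \eqref{eq:PBW4} of $\mathcal U^+_q$. This simultaneously shows the spanning monomials in~(i) and~(ii) are linearly independent (so they are genuine bases of $L$ and $R$), and that the multiplication map in~(iii) carries a basis to a basis, hence is a vector-space isomorphism.

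The main obstacle will be the spanning step for $L$ in part~(i): one must verify that the reduction rules coming from \eqref{eq:3p2}, \eqref{eq:3p6}, and \eqref{eq:3p10} never generate any generator \emph{outside} $L$ (in particular, no $\mathcal W_{k+1}$ or $\mathcal{\tilde G}_{k+1}$ can appear when reordering a word purely in the $\mathcal W_{-k}$ and $\mathcal G_{k+1}$), and that the index bookkeeping produces only lower-order correction terms so that an induction on total degree and on the number of out-of-order adjacent pairs terminates. Concretely, the relation $\lbrack \mathcal W_0,\mathcal G_{k+1}\rbrack_q=(q-q^{-1})\mathcal W_{-k-1}$ and the mixed relation \eqref{eq:3p6} must be rewritten as explicit reduction rules expressing $\mathcal G_{j+1}\mathcal W_{-i}$ as $q^{\pm}$-multiples of $\mathcal W_{-i}\mathcal G_{j+1}$ plus strictly shorter $L$-monomials; once these are in hand, the diamond/straightening argument sorts any word into normal form. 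Everything else then follows formally from the identification with \eqref{eq:PBW4} and from $\sigma$.
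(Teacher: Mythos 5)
Your overall strategy is the paper's: compare against the already-known PBW basis of $\mathcal U^+_q$ in the order $\mathcal W_{-i}<\mathcal G_{j+1}<\mathcal {\tilde G}_{k+1}<\mathcal W_{\ell+1}$ (Lemma \ref{lem:pbwP}, i.e.\ \cite[Theorem~10.2]{altCE}), observe that the straightening rules for the generators of $L$ (resp.\ $R$) produce correction terms lying again in $L$ (resp.\ $R$), and then note that a sorted $L$-monomial followed by a sorted $R$-monomial is exactly an ambient PBW monomial, which gives linear independence in (i), (ii) and the bijectivity in (iii). The ``main obstacle'' you flag --- turning \eqref{eq:3p2}, \eqref{eq:3p6}, \eqref{eq:3p10} into explicit reduction rules whose correction terms stay inside $L$ --- is precisely what the paper supplies in Lemma \ref{lem:rr1} (third displayed equation: $\mathcal G_{i+1}\mathcal W_{-j}=\mathcal W_{-j}\mathcal G_{i+1}+q(q-q^{-1})\sum_{\ell}(\mathcal W_{-\ell}\mathcal G_{i+j+1-\ell}-\mathcal W_{\ell-i-j-1}\mathcal G_{\ell})$), so that part of your plan goes through by citation rather than by a fresh straightening argument.

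There is, however, one concrete error: you cannot get part (ii) from part (i) by applying $\sigma$. Since $\sigma$ is an automorphism sending $\mathcal W_{-k}\mapsto\mathcal W_{k+1}$ and $\mathcal G_{k+1}\mapsto\mathcal {\tilde G}_{k+1}$, the image of the basis of $L$ sorted as $\mathcal W_{-i}<\mathcal G_{j+1}$ is a basis of $R$ sorted as $\mathcal W_{\ell+1}<\mathcal {\tilde G}_{k+1}$ --- the \emph{opposite} of the order claimed in (ii). (Indeed, that $\sigma$-image is exactly how the paper later proves Lemma \ref{lem:XPBW2}(ii); the $\sigma$-images of Lemma \ref{lem:XPBW}(i),(ii) are Lemma \ref{lem:XPBW2}(ii),(i), not each other.) This matters downstream: your concatenation argument for (iii) needs $R$-monomials in the order $\mathcal {\tilde G}<\mathcal W_{+}$ so that $l\,r$ matches the ambient order $\mathcal W_{-}<\mathcal G<\mathcal {\tilde G}<\mathcal W_{+}$. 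The fix is to prove (ii) directly, as the paper does, from the last displayed equation of Lemma \ref{lem:rr1}, namely $\mathcal W_{i+1}\mathcal {\tilde G}_{j+1}=\mathcal {\tilde G}_{j+1}\mathcal W_{i+1}+q^{-1}(q-q^{-1})\sum_{\ell}(\mathcal {\tilde G}_{\ell}\mathcal W_{i+j+2-\ell}-\mathcal {\tilde G}_{i+j+1-\ell}\mathcal W_{\ell+1})$, whose correction terms are already sorted with $\mathcal {\tilde G}$ before $\mathcal W_{+}$ and stay inside $R$. With that substitution your argument is complete and coincides with the paper's.
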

\begin{proof} We refer to Appendix A.
\\
\noindent (i) By Lemma \ref{lem:pbwP}
and the third displayed equation in Lemma \ref{lem:rr1}.
\\
\noindent (ii) By Lemma \ref{lem:pbwP}
and the last displayed equation in Lemma \ref{lem:rr1}.
\\
\noindent (iii)  By Lemma \ref{lem:pbwP} and (i), (ii) above.
\end{proof}
\noindent Recall from Lemma \ref{lem:autAc} the automorphism $\sigma$ of $\mathcal U^+_q$.

\begin{lemma} \label{lem:swap} The automorphism $\sigma$ sends $L \leftrightarrow R$.
\end{lemma}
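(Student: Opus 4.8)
The claim is that the automorphism $\sigma$ from Lemma~\ref{lem:autAc} interchanges the subalgebras $L$ and $R$ from Definition~\ref{def:2p}. The plan is to exploit the fact that $\sigma$ is an automorphism of $\mathcal U^+_q$, so it carries a subalgebra onto the subalgebra generated by the images of any generating set. First I would recall from Lemma~\ref{lem:autAc} that $\sigma$ acts on the alternating generators by $\mathcal W_{-k} \mapsto \mathcal W_{k+1}$, $\mathcal W_{k+1} \mapsto \mathcal W_{-k}$, $\mathcal G_{k+1} \mapsto \mathcal{\tilde G}_{k+1}$, and $\mathcal{\tilde G}_{k+1} \mapsto \mathcal G_{k+1}$ for $k \in \mathbb N$.

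The key observation is that the generating set for $L$ is $\{\mathcal W_{-k}\}_{k\in\mathbb N} \cup \{\mathcal G_{k+1}\}_{k\in\mathbb N}$, and $\sigma$ sends this set to $\{\mathcal W_{k+1}\}_{k\in\mathbb N} \cup \{\mathcal{\tilde G}_{k+1}\}_{k\in\mathbb N}$, which is precisely the generating set for $R$. Since $\sigma$ is an algebra homomorphism, the image $\sigma(L)$ is the subalgebra generated by $\sigma$ applied to the generators of $L$; by the matching of generating sets this is exactly $R$. Hence $\sigma(L) \subseteq R$, and in fact $\sigma(L) = R$ because $\sigma$ maps onto the full generating set of $R$.

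To get the reverse inclusion and conclude the two-sided statement $L \leftrightarrow R$, I would invoke $\sigma^2 = \mathrm{id}$, also from Lemma~\ref{lem:autAc}. Applying $\sigma$ to $\sigma(L) = R$ gives $L = \sigma^2(L) = \sigma(R)$, so $\sigma$ also carries $R$ onto $L$. Together these show $\sigma(L) = R$ and $\sigma(R) = L$, which is the asserted symmetry. No genuine obstacle arises here: the entire argument is a formal consequence of $\sigma$ being an involutive automorphism together with the explicit generator action, and the only thing to check carefully is that the images of the generators of $L$ coincide as a set with the generators of $R$ (and symmetrically), which is immediate from comparing Definition~\ref{def:2p} with the formulas in Lemma~\ref{lem:autAc}.
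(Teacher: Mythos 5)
Your proposal is correct and matches the paper's proof, which simply cites Lemma \ref{lem:autAc} and Definition \ref{def:2p}; you have spelled out the same observation that $\sigma$ carries the generating set of $L$ onto that of $R$ (and conversely, via $\sigma^2=\mathrm{id}$).
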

\begin{proof} By Lemma \ref{lem:autAc} and Definition \ref{def:2p}.
\end{proof}

\begin{lemma} \label{lem:XPBW2} The following {\rm (i), (ii)} hold for the subalgebras $L$ and $R$:
\begin{enumerate}
\item[\rm (i)]
a PBW basis for $L$ is obtained by the elements $\lbrace \mathcal G_{i+1}\rbrace_{i\in \mathbb N}$, $\lbrace \mathcal W_{-j}\rbrace_{j\in \mathbb N}$
in any linear order such that $\mathcal G_{i+1} < \mathcal W_{-j}$ for  $i,j \in \mathbb N$;
\item[\rm (ii)]  a PBW basis for $R$ is obtained by the elements $\lbrace \mathcal W_{k+1}\rbrace_{k\in \mathbb N}$, $\lbrace \mathcal {\tilde G}_{\ell+1}\rbrace_{\ell\in \mathbb N}$
in any linear order such that
$\mathcal W_{k+1} < \mathcal {\tilde G}_{\ell+1}$ for  $k, \ell \in \mathbb N$.
\end{enumerate}
\end{lemma}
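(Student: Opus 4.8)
\textbf{Proof proposal for Lemma \ref{lem:XPBW2}.}

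The plan is to deduce both statements from Lemma \ref{lem:XPBW} by applying the symmetries of $\mathcal U^+_q$, rather than redoing the reduction-rule analysis from Appendix A. I would first establish part (i) directly, then obtain part (ii) from part (i) by applying the automorphism $\sigma$ via Lemma \ref{lem:swap}. For part (i), the goal is to switch the ordering of $L$ from the ordering in Lemma \ref{lem:XPBW}(i), namely $\mathcal W_{-i} < \mathcal G_{j+1}$, to the reversed ordering $\mathcal G_{i+1} < \mathcal W_{-j}$. The key structural input is that within $L$ the $\mathcal W_{-i}$ mutually commute (relation \eqref{eq:3p4}) and the $\mathcal G_{j+1}$ mutually commute (relation \eqref{eq:3p10}), so the only reduction rules that interact the two families come from relation \eqref{eq:3p6}, which reads $\lbrack \mathcal W_{-k}, \mathcal G_{\ell+1}\rbrack + \lbrack \mathcal G_{k+1}, \mathcal W_{-\ell}\rbrack = 0$.

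The main step is a triangularity argument. First I would fix a grading (or filtration) on $L$ by total degree in which $\mathcal W_{-i}$ has degree $i+1$ and $\mathcal G_{j+1}$ has degree $j+1$, consistent with the $\natural$-images $\mathcal W_{-i} \mapsto (xy)^i x$ and $\mathcal G_{j+1}\mapsto (yx)^{j+1}$ having lengths $2i+1$ and $2(j+1)$; more simply, one grades by the subscript magnitude so that relation \eqref{eq:3p6} is homogeneous. I would then show that the monomials in the new ordering \eqref{eq:GWWG}-type order (the $\mathcal G$'s before the $\mathcal W_{-j}$'s) span $L$: starting from an arbitrary monomial in the $\mathcal W_{-i}$ and $\mathcal G_{j+1}$, one uses relation \eqref{eq:3p6} repeatedly to move every $\mathcal G$-factor to the left past every $\mathcal W_{-}$-factor, at the cost of lower terms of the same total degree. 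Because the correction term in \eqref{eq:3p6} again involves one $\mathcal W_{-}$ and one $\mathcal G$ factor (with shifted indices preserving total degree), the rewriting terminates, establishing that the reordered monomials span. Linear independence then follows because the reordered monomials are obtained from the original PBW basis of Lemma \ref{lem:XPBW}(i) by an upper-triangular (degree-preserving, unipotent) change of basis, hence the transition matrix is invertible and the reordered set is again a basis.

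The main obstacle will be verifying that the rewriting process is genuinely triangular and terminating, i.e.\ that each application of \eqref{eq:3p6} strictly decreases some well-founded statistic (such as the number of inversions, counting pairs where a $\mathcal W_{-}$ precedes a $\mathcal G$) while leaving the total degree fixed. I would make this precise by the standard diamond-lemma bookkeeping: the correction term $\lbrack \mathcal G_{k+1}, \mathcal W_{-\ell}\rbrack$ has the same total subscript weight as $\lbrack \mathcal W_{-k}, \mathcal G_{\ell+1}\rbrack$ but fewer $\mathcal W$-before-$\mathcal G$ inversions, so the induction closes. Once part (i) is in hand, part (ii) is immediate: by Lemma \ref{lem:swap} the automorphism $\sigma$ maps $L$ onto $R$ and, by Lemma \ref{lem:autAc}, sends $\mathcal G_{i+1}\mapsto \mathcal {\tilde G}_{i+1}$ and $\mathcal W_{-j}\mapsto \mathcal W_{j+1}$; applying $\sigma$ to the PBW basis of part (i), with its ordering $\mathcal G_{i+1} < \mathcal W_{-j}$, yields precisely a PBW basis for $R$ in the ordering $\mathcal W_{k+1} < \mathcal {\tilde G}_{\ell+1}$, as claimed. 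Since $\sigma$ is an algebra isomorphism carrying basis to basis, no further computation is needed.
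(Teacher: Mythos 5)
Your derivation of part (ii) from part (i) via $\sigma$ is sound, and this is exactly the mechanism the paper itself uses; the genuine gap is in your direct argument for part (i). Relation \eqref{eq:3p6} is an antisymmetry identity, not a reduction rule. Written out, it gives
\begin{align*}
\mathcal W_{-k}\,\mathcal G_{\ell+1}
=\mathcal G_{\ell+1}\,\mathcal W_{-k}-\mathcal G_{k+1}\,\mathcal W_{-\ell}+\mathcal W_{-\ell}\,\mathcal G_{k+1},
\end{align*}
and the last correction term $\mathcal W_{-\ell}\,\mathcal G_{k+1}$ has exactly the same number of $\mathcal W$-before-$\mathcal G$ inversions as the word you started with, contrary to your claim that the inversion count drops. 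Applying the same rule to that term returns you verbatim to $\mathcal W_{-k}\,\mathcal G_{\ell+1}$, so the rewriting is circular rather than terminating: relation \eqref{eq:3p6} alone cannot express $\mathcal W_{-k}\,\mathcal G_{\ell+1}$ as a combination of $\mathcal G$-first monomials, and in the algebra presented by only \eqref{eq:3p4}, \eqref{eq:3p6}, \eqref{eq:3p10} the reordered monomials would not span. The reduction rule that actually holds in $L$ (the third displayed equation of Proposition \ref{prop:rr3}, equivalently Lemma \ref{lem:redrel2}) involves a sum over $\ell$ and rests on the full set of relations imported from \cite{altCE}, not on \eqref{eq:3p6} alone; any direct triangularity argument must start from it, and even then the unipotence claim needs care, since the $\ell=\min(i,j)$ term of that sum can reproduce the leading monomial and change its coefficient.

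The paper avoids all of this. Since $\sigma$ interchanges $L$ and $R$ (Lemma \ref{lem:swap}), and Lemma \ref{lem:XPBW}(ii) already exhibits a PBW basis of $R$ in which the $\mathcal {\tilde G}_{k+1}$ precede the $\mathcal W_{\ell+1}$, applying $\sigma$ (which sends $\mathcal {\tilde G}_{k+1}\mapsto \mathcal G_{k+1}$ and $\mathcal W_{\ell+1}\mapsto \mathcal W_{-\ell}$ by Lemma \ref{lem:autAc}) yields part (i) with no reordering argument whatsoever; part (ii) follows in the same way from Lemma \ref{lem:XPBW}(i). You can repair your proof either by substituting this observation for your rewriting argument, or by grounding the rewriting in Proposition \ref{prop:rr3} rather than in \eqref{eq:3p6}.
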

\begin{proof} (i) Apply $\sigma$ to the PBW basis for $R$ given in Lemma \ref{lem:XPBW}(ii), and use Lemmas  \ref{lem:autAc}, \ref{lem:swap}.
\\
\noindent (ii)  Apply $\sigma$ to the PBW basis for $L$ given in Lemma \ref{lem:XPBW}(i), and use Lemmas  \ref{lem:autAc}, \ref{lem:swap}.
\end{proof}

\begin{proposition} \label{lem:pbw}   A PBW basis for $\mathcal U^+_q$ is obtained by its alternating generators in any linear order $<$ such that
\begin{align}
\mathcal G_{i+1} < \mathcal W_{-j} < \mathcal W_{k+1} < \mathcal {\tilde G}_{\ell+1}\qquad \qquad i,j,k, \ell \in \mathbb N.
\label{eq:order}
\end{align}
\end{proposition}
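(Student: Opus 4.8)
The plan is to deduce the claim directly from the factorization $\mathcal U^+_q \cong L \otimes R$ of Lemma \ref{lem:XPBW}(iii), combined with the PBW bases for the subalgebras $L$ and $R$ furnished by Lemma \ref{lem:XPBW2}. The key observation is that the linear order \eqref{eq:order} is adapted to the decomposition of Definition \ref{def:2p}: every generator of $L$ (each $\mathcal G_{i+1}$ and each $\mathcal W_{-j}$) precedes every generator of $R$ (each $\mathcal W_{k+1}$ and each $\mathcal {\tilde G}_{\ell+1}$). Moreover, restricted to the generators of $L$ the order \eqref{eq:order} is precisely the type of order treated in Lemma \ref{lem:XPBW2}(i), namely $\mathcal G_{i+1} < \mathcal W_{-j}$, and restricted to the generators of $R$ it is precisely the type of order treated in Lemma \ref{lem:XPBW2}(ii), namely $\mathcal W_{k+1} < \mathcal {\tilde G}_{\ell+1}$.

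First I would analyze the shape of an arbitrary candidate PBW monomial $a_1 \leq a_2 \leq \cdots \leq a_n$ for the order \eqref{eq:order}. Since all generators of $L$ precede all generators of $R$, there is a unique index $m$ with $0 \leq m \leq n$ such that $a_1, \ldots, a_m$ are generators of $L$ while $a_{m+1}, \ldots, a_n$ are generators of $R$. Setting $l = a_1 \cdots a_m$ and $r = a_{m+1} \cdots a_n$, the monomial factors as $a_1 \cdots a_n = l r$, where $l$ is an ordered product of $L$-generators in the order of Lemma \ref{lem:XPBW2}(i) and $r$ is an ordered product of $R$-generators in the order of Lemma \ref{lem:XPBW2}(ii). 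Conversely, each pair consisting of an ordered $L$-monomial and an ordered $R$-monomial arises from exactly one PBW monomial for \eqref{eq:order}. Hence the assignment $(l,r) \mapsto lr$ is a bijection from the product of (PBW monomials for $L$) with (PBW monomials for $R$) onto the set of PBW monomials for \eqref{eq:order}.

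It then remains to assemble the pieces. By Lemma \ref{lem:XPBW2}(i) the PBW monomials for $L$ form a basis of $L$, and by Lemma \ref{lem:XPBW2}(ii) the PBW monomials for $R$ form a basis of $R$; therefore their tensor products form a basis of $L \otimes R$. Applying the vector space isomorphism $L \otimes R \to \mathcal U^+_q$, $l \otimes r \mapsto lr$ of Lemma \ref{lem:XPBW}(iii), the corresponding images $lr$ form a basis of $\mathcal U^+_q$. By the bijection of the previous paragraph these images are exactly the PBW monomials for \eqref{eq:order}, so the alternating generators in the order \eqref{eq:order} give a PBW basis for $\mathcal U^+_q$, as claimed.

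I do not anticipate a serious obstacle: once the order \eqref{eq:order} is recognized as compatible with the $L$--$R$ split, the argument reduces to transporting a tensor-product basis across the isomorphism of Lemma \ref{lem:XPBW}(iii). The only point requiring genuine care is the bookkeeping of the bijection between full PBW monomials and pairs of $L$- and $R$-monomials, which rests entirely on the facts that \eqref{eq:order} places all of $L$ below all of $R$ and restricts on each block to the orders already handled in Lemma \ref{lem:XPBW2}.
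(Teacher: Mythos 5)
Your proposal is correct and follows exactly the route of the paper, which proves the proposition by citing Lemma \ref{lem:XPBW}(iii) together with Lemma \ref{lem:XPBW2}; you have simply spelled out the bookkeeping that the paper leaves implicit. No issues.
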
 
\begin{proof} By Lemma \ref{lem:XPBW}(iii) and Lemma \ref{lem:XPBW2}.
\end{proof}

\noindent For the PBW basis in Proposition \ref{lem:pbw}, the nontrivial reduction rules are a consequence of the following result.

\begin{lemma}
\label{lem:redrel2} 
For the algebra $\mathcal U^+_q$ we have
\begin{align*}
\mathcal W^+(s) \mathcal W^-(t) &= 
\mathcal W^-(t) \mathcal W^+(s) +
(1-q^{-2})\frac{\mathcal G(s) \mathcal {\tilde G}(t)-\mathcal G(t)
\mathcal {\tilde G}(s)}{s-t},
\\
\mathcal {\tilde G}(s) \mathcal G(t) &= 
\mathcal G(t) \mathcal {\tilde G}(s) +
(1-q^{2})st \frac{\mathcal W^-(t) \mathcal W^{+}(s)-\mathcal W^-(s)
\mathcal W^+(t)}{s-t}
\end{align*}
\noindent and also
\begin{align*}
\mathcal W^-(s) \mathcal G(t) &= q^{-1}\frac{(qs-q^{-1}t) \mathcal G(t) \mathcal W^-(s)-(q-q^{-1})t \mathcal G(s)\mathcal W^-(t)}{s-t},
\\
\mathcal W^+(s) \mathcal G(t) &= 
q \frac{(q^{-1}s-qt)\mathcal G(t) \mathcal W^+(s) + (q-q^{-1})t 
\mathcal G(s) \mathcal W^+(t)}{s-t},
\\
\mathcal {\tilde G}(s) \mathcal W^-(t) &= 
q^{-1} \frac{(q^{-1}s-qt)\mathcal W^-(t) \mathcal {\tilde G}(s) + (q-q^{-1})s 
\mathcal W^-(s) \mathcal {\tilde G}(t)}{s-t},
\\
\mathcal {\tilde G}(s) \mathcal W^+(t) &=  q
\frac{(qs-q^{-1}t) \mathcal W^+(t)\mathcal {\tilde G}(s)-(q-q^{-1})s \mathcal W^+(s) \mathcal {\tilde G}(t)}{s-t}.
\end{align*}
\end{lemma}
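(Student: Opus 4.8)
The plan is to translate each of the six displayed generating-function identities in Lemma \ref{lem:redrel2} back into its coefficient-wise form and recognize it as (a repackaging of) one of the defining relations from Definition \ref{def:Aq}, exactly as was done for the relations already appearing in Lemma \ref{lem:ad}. The guiding principle is that multiplication of generating functions corresponds to convolution of coefficients, and that dividing by $s-t$ encodes a telescoping/summation of commutators. So the first task is to establish the elementary dictionary: if $a(s)b(t)-a(t)b(s)$ is divided by $s-t$, the coefficient of $s^m t^n$ in the quotient is a finite sum of products $a_i b_j$, and similarly for the $st/(s-t)$ and $(qs-q^{-1}t)/(s-t)$ prefactors. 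I would clear denominators first, rewriting each identity in the equivalent polynomial form (for instance, multiplying the first identity through by $s-t$) so that no division is present and the comparison of coefficients is unambiguous.

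Next I would take the six identities one at a time. For the first two, after multiplying by $s-t$, I match the coefficient of $s^{k+1}t^{\ell+1}$ (or the appropriate bidegree) on both sides; the left side produces a commutator like $\lbrack \mathcal W_{k+1}, \mathcal W_{-\ell}\rbrack$ and the right side produces the corresponding $\lbrack \mathcal W_{-\ell}, \mathcal W_{k+1}\rbrack$ together with the $\mathcal G\mathcal{\tilde G}$ difference, which I recognize as relation \eqref{eq:3p5} combined with the definitions \eqref{eq:3p1} tying the $\mathcal W$-commutators to $\mathcal{\tilde G}_{k+1}-\mathcal G_{k+1}$. The second identity is the $\sigma$- or $\dagger$-image of the first, so once the first is verified I can obtain the second either by the same coefficient comparison using \eqref{eq:3p11} and \eqref{eq:3p4}, or more cheaply by applying the antiautomorphism $\dagger$ from Lemma \ref{lem:antiautAc}, which swaps $\mathcal G \leftrightarrow \mathcal{\tilde G}$ and reverses products, turning the $\mathcal W^+\mathcal W^-$ relation into the $\mathcal{\tilde G}\mathcal G$ relation. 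The last four identities are two $\sigma$-related pairs built from the mixed relations \eqref{eq:3p6}--\eqref{eq:3p9}; for these the cleared-denominator forms are polynomial identities whose $s^{k+1}t^{\ell+1}$-coefficients are precisely the $s$-$t$-weighted commutator relations \eqref{eq:3pp6}--\eqref{eq:3pp9} already recorded in generating-function form in Lemma \ref{lem:ad}. Indeed these four should follow almost formally by algebraic rearrangement of \eqref{eq:3pp6}--\eqref{eq:3pp9}.

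The main obstacle I anticipate is bookkeeping rather than conceptual: correctly tracking the powers of $q$ and the shifts in index that arise when the prefactors $qs-q^{-1}t$ interleave with the convolution, and confirming that the apparent poles at $s=t$ genuinely cancel so that each right-hand side is a bona fide generating function with no negative or fractional powers. To control this I would verify, as a consistency check, that setting $s=t$ (after clearing denominators) yields $0=0$, which guarantees divisibility by $s-t$; and I would spot-check the lowest-degree coefficients against the scalar normalizations $\mathcal G_0=\mathcal{\tilde G}_0=1$ and $\mathcal W^-(0)=\mathcal W_0$, $\mathcal W^+(0)=\mathcal W_1$ from Definition \ref{def:gf4}. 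A secondary subtlety is the first two identities, where the division by $s-t$ acts on a \emph{difference} of cross terms rather than a single product; here I must confirm that the combination $\mathcal G(s)\mathcal{\tilde G}(t)-\mathcal G(t)\mathcal{\tilde G}(s)$ is antisymmetric in $s,t$ (immediate) so that the quotient is a polynomial generating function, and then read off the resulting symmetric-sum relation and identify it with \eqref{eq:3p5} using \eqref{eq:3p1}. Once the dictionary and these cancellation checks are in place, each identity reduces to a routine comparison of coefficients with a relation of Definition \ref{def:Aq}.
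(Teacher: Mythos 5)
There is a genuine gap. Your plan is to clear denominators, compare coefficients, and ``recognize'' each identity as a repackaging of one of the defining relations \eqref{eq:3p1}--\eqref{eq:3p11}. But the identities of Lemma \ref{lem:redrel2} are strictly stronger than the defining relations, and no amount of formal rearrangement of a single defining relation will produce them. Concretely: the coefficient of $s^i t^j$ in the first identity asserts
\begin{align*}
\lbrack \mathcal W_{i+1}, \mathcal W_{-j}\rbrack \;=\; q^{-1}(q-q^{-1})\sum_{\ell=0}^{\min(i,j)}\bigl(\mathcal G_{i+j+1-\ell}\mathcal {\tilde G}_{\ell}-\mathcal G_{\ell}\mathcal {\tilde G}_{i+j+1-\ell}\bigr),
\end{align*}
an explicit closed form for a single commutator as a sum of $\min(i,j)+1$ products. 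The defining relations only supply this when $\min(i,j)=0$ (via \eqref{eq:3p1}); for general $i,j$ they merely assert the antisymmetrized statement $\lbrack \mathcal W_{-i},\mathcal W_{j+1}\rbrack+\lbrack \mathcal W_{i+1},\mathcal W_{-j}\rbrack=0$ from \eqref{eq:3p5}, which relates two unknown commutators without determining either. The same problem afflicts your treatment of the last four identities: for example, \eqref{eq:3pp6} gives $s\lbrack \mathcal W^-(s),\mathcal G(t)\rbrack=-t\lbrack \mathcal G(s),\mathcal W^-(t)\rbrack$, whereas the third displayed identity of the Lemma is equivalent (after clearing $s-t$ and substituting \eqref{eq:3pp6}) to the additional assertion that $\lbrack \mathcal W^-(t),\mathcal G(s)\rbrack_q$ is symmetric in $s,t$ --- a statement that reduces to \eqref{eq:3pp2} only at $s=0$ and is not a formal consequence of \eqref{eq:3pp6} alone. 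Your own consistency check (setting $s=t$ gives $0=0$) confirms divisibility by $s-t$ but does not certify the identity itself.

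Establishing these reduction rules genuinely requires an inductive derivation that feeds in the relations \eqref{eq:3p2}, \eqref{eq:3p3} involving $\mathcal W_0$, $\mathcal W_1$ to propagate from the base case $\min(i,j)=0$ to general bidegree; that work is the content of \cite[Lemma~A.6]{altCE}, reproduced here as Lemma \ref{lem:redrel2A}. The paper's proof of Lemma \ref{lem:redrel2} does not rederive anything: it observes that each of the six equations, or its image under the automorphism $\sigma$ of Lemma \ref{lem:autAc}, is literally one of the six equations of Lemma \ref{lem:redrel2A}. To repair your argument you would either have to cite Lemma \ref{lem:redrel2A} in the same way, or supply the missing induction.
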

\begin{proof} For each of the above equations, either the equation or its $\sigma$-image is listed in Lemma \ref{lem:redrel2A}.
\end{proof}

\noindent Next we give the nontrivial reduction rules for the PBW basis in Proposition \ref{lem:pbw}.

 \begin{proposition} \label{prop:rr3} The following relations hold in $\mathcal U^+_q$. For $i,j \in \mathbb N$,
 \begin{align*}
 \mathcal W_{i+1} \mathcal W_{-j} &=\mathcal W_{-j} \mathcal W_{i+1} 
 + q^{-1}(q-q^{-1})\sum_{\ell=0}^{{\rm min}(i,j)} \bigl( \mathcal G_{i+j+1-\ell} \mathcal {\tilde G}_\ell - \mathcal G_\ell \mathcal {\tilde G}_{i+j+1-\ell} \bigr),
 \\
 \mathcal {\tilde G}_{i+1} \mathcal G_{j+1} &= \mathcal G_{j+1} \mathcal {\tilde G}_{i+1} +
 q(q-q^{-1}) \sum_{\ell=0}^{{\rm min}(i,j)} \bigl( 
 \mathcal W_{\ell-i-j-1} \mathcal W_{\ell+1} - \mathcal W_{-\ell} \mathcal W_{i+j+2-\ell} \bigr),
 \end{align*}
 and also
 \begin{align*}
 \mathcal W_{-i} \mathcal G_{j+1} &= \mathcal G_{j+1} \mathcal W_{-i} 
 + 
 q^{-1}(q-q^{-1}) \sum_{\ell=0}^{{\rm min}(i,j)} \bigl( \mathcal G_{\ell} \mathcal W_{\ell-i-j-1} - \mathcal G_{i+j+1-\ell} \mathcal W_{-\ell}\bigr),
 \\
 \mathcal W_{i+1} \mathcal G_{j+1} &= \mathcal G_{j+1} \mathcal W_{i+1} 
 + 
 q(q-q^{-1}) \sum_{\ell=0}^{{\rm min}(i,j)} \bigl( \mathcal G_{i+j+1-\ell} \mathcal W_{\ell+1} - \mathcal G_\ell \mathcal W_{i+j+2-\ell}\bigr),
 \\
  \mathcal {\tilde G}_{i+1} \mathcal W_{-j} &= \mathcal W_{-j} \mathcal {\tilde G}_{i+1} +
 q^{-1}(q-q^{-1}) \sum_{\ell=0}^{{\min}(i,j)} \bigl( 
 \mathcal W_{\ell-i-j-1} \mathcal {\tilde G}_{\ell} - \mathcal W_{-\ell} \mathcal {\tilde G}_{i+j+1-\ell}\bigr),
 \\
   \mathcal {\tilde G}_{i+1} \mathcal W_{j+1} &= \mathcal W_{j+1} \mathcal {\tilde G}_{i+1} +
 q(q-q^{-1}) \sum_{\ell=0}^{{\min}(i,j)} \bigl( 
 \mathcal W_{\ell+1} \mathcal {\tilde G}_{i+j+1-\ell} - \mathcal W_{i+j+2-\ell} \mathcal {\tilde G}_{\ell}\bigr).
 \end{align*}
 \end{proposition}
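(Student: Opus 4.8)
The plan is to obtain all six relations by extracting, from the appropriate generating-function identity in Lemma \ref{lem:redrel2}, the coefficient of a suitable monomial $s^i t^j$; the entire proof is then formal-power-series bookkeeping. The one fact I would record first is the elementary expansion underlying everything: for generating functions $A(t)=\sum_n A_n t^n$ and $B(t)=\sum_n B_n t^n$ over $\mathcal U^+_q$, the element $A(s)B(t)-A(t)B(s)$ is negated under interchange of $s,t$, hence is divisible by $s-t$, and the quotient
\[
\frac{A(s)B(t)-A(t)B(s)}{s-t}
\]
has $s^i t^j$-coefficient equal to $\sum_{\ell=0}^{\min(i,j)}\bigl(A_{i+j+1-\ell}B_\ell-A_\ell B_{i+j+1-\ell}\bigr)$. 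This follows from $\frac{s^a t^b-s^b t^a}{s-t}=\sum_{k=0}^{a-b-1}s^{a-1-k}t^{b+k}$ for $a>b$, after reindexing by $\ell=b$; the reindexing is exactly what produces the range $0\le\ell\le\min(i,j)$ and the subscript shape $A_{i+j+1-\ell}B_\ell$ that appear on the right-hand sides of Proposition \ref{prop:rr3}.

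For the first two relations the identities in Lemma \ref{lem:redrel2} are already commutators set equal to a scalar times a monomial times such a difference quotient. For the first relation I would take $A=\mathcal G$, $B=\mathcal{\tilde G}$, read off the $s^i t^j$-coefficient, and use $1-q^{-2}=q^{-1}(q-q^{-1})$. For the second I would take $A=\mathcal W^-$, $B=\mathcal W^+$; here the numerator is the negative of the lemma form, so the scalar becomes $-(1-q^2)=q(q-q^{-1})$, and the prefactor $st$ shifts the relevant monomial to $s^{i+1}t^{j+1}$, which is what produces the $+1$'s in the subscripts of that relation.

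The four mixed relations require one preliminary algebraic step, since in Lemma \ref{lem:redrel2} they are not presented as commutators. For each I would clear the denominator $s-t$, absorb the linear factor (for example $q^{-1}(qs-q^{-1}t)=s-q^{-2}t$), and then subtract the term $(s-t)\mathcal G(t)\mathcal W^-(s)$ coming from the commutator; the linear factors are arranged so that the leading parts cancel, leaving
\[
[\mathcal W^-(s),\mathcal G(t)]=q^{-1}(q-q^{-1})\,t\,\frac{\mathcal G(t)\mathcal W^-(s)-\mathcal G(s)\mathcal W^-(t)}{s-t},
\]
and similarly for the other three. Applying the expansion above, with attention to the ordering of the coefficients (which fixes the signs) and to the single factor $t$ (which shifts to $s^i t^{j+1}$), yields the four mixed relations. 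I would also invoke the automorphism $\sigma$ of Lemma \ref{lem:autAc} to halve the casework, since the six relations occur in $\sigma$-matched pairs.

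The main obstacle I anticipate is precisely this preliminary step: one must verify that the specific linear factors $qs-q^{-1}t$ and $q^{-1}s-qt$ appearing in Lemma \ref{lem:redrel2} are exactly those that collapse the cleared identity to a single monomial times a difference quotient. Were the identity not to reduce to this shape, coefficient extraction would produce a recursion rather than the closed-form sum $\sum_{\ell=0}^{\min(i,j)}$; so confirming this cancellation, while keeping the coefficient orderings and the index shifts straight, is where the care is needed.
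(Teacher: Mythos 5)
Your proposal is correct and follows the paper's own (one-line) proof, which is simply to extract the coefficient of the appropriate monomial $s^it^j$ from each identity in Lemma \ref{lem:redrel2}; your general formula for the $s^it^j$-coefficient of $\frac{A(s)B(t)-A(t)B(s)}{s-t}$ and your rewriting of the four mixed identities in commutator form (e.g.\ via $q^{-1}(qs-q^{-1}t)=(s-t)+q^{-1}(q-q^{-1})t$) both check out and reproduce all six relations with the stated scalars and index ranges.
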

 \begin{proof} These relations are obtained by unpacking the equations in Lemma \ref{lem:redrel2}.
 \end{proof}


\section{The center of $\mathcal U^+_q$}
Earlier in this paper we discussed the generating functions $E^\pm(t)$, $E(t)$ for $U^+_q$
and the generating functions $\mathcal W^\pm(t)$, $\mathcal G(t)$, $\mathcal {\tilde G}(t)$ for $\mathcal U^+_q$. In the next section, we will investigate how
$E^\pm(t)$, $E(t)$ are related to 
 $\mathcal W^\pm(t)$, $\mathcal G(t)$, $\mathcal {\tilde G}(t)$. In the present section, 
we prepare for this investigation with some remarks about the center 
$\mathcal Z$ of $\mathcal U^+_q$. 

\begin{definition} 
\label{def:Zvee}
\rm (See \cite[Definition~5.1]{altCE}.)
For $n \geq 1 $ define
\begin{align}
 \mathcal Z^\vee_n= \sum_{k=0}^n \mathcal G_k \mathcal {\tilde G}_{n-k} q^{n-2k}
-  q \sum_{k=0}^{n-1} \mathcal W_{-k} \mathcal W_{n-k} q^{n-1-2k}.
\label{eq:ZnVee}
\end{align}
For notational convenience define $\mathcal Z^\vee_0=1$.
\end{definition}

\noindent Next, we interpret Definition \ref{def:Zvee} in terms of generating functions.
\begin{definition}\label{def:Zn} Define the generating function
 \begin{align*}
 \mathcal Z^\vee(t) = \sum_{n\in \mathbb N} \mathcal Z^\vee_n  t^n.
 \end{align*}
 \end{definition}

\begin{lemma} \label{lem:zV1} {\rm (See \cite[Lemma~A.8]{altCE}.)}
We have
\begin{align}
\mathcal Z^\vee(t) &= 
 \mathcal G(q^{-1}t) \mathcal{\tilde G}(qt) -qt
\mathcal W^{-} (q^{-1}t)\mathcal W^+(qt).
\label{lem:zs}
\end{align}
\end{lemma}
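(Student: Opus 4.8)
The plan is to recognize that equation \eqref{lem:zs} is simply the generating-function encoding of the definition \eqref{eq:ZnVee}, so the proof reduces to matching coefficients of $t^n$ on both sides. First I would expand the right-hand side as a product of generating functions. Recall $\mathcal G(t)=\sum_{k\in\mathbb N}\mathcal G_k t^k$, so $\mathcal G(q^{-1}t)=\sum_k \mathcal G_k q^{-k}t^k$ and $\mathcal{\tilde G}(qt)=\sum_m \mathcal{\tilde G}_m q^m t^m$. Their product has $t^n$-coefficient
\begin{align*}
\sum_{k=0}^n \mathcal G_k q^{-k}\,\mathcal{\tilde G}_{n-k}q^{n-k}=\sum_{k=0}^n \mathcal G_k \mathcal{\tilde G}_{n-k}\,q^{n-2k},
\end{align*}
which is exactly the first sum in \eqref{eq:ZnVee}.

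Next I would treat the second term $-qt\,\mathcal W^-(q^{-1}t)\mathcal W^+(qt)$. Here $\mathcal W^-(q^{-1}t)=\sum_{k\in\mathbb N}\mathcal W_{-k}q^{-k}t^k$ and $\mathcal W^+(qt)=\sum_{m\in\mathbb N}\mathcal W_{m+1}q^m t^m$. The product $\mathcal W^-(q^{-1}t)\mathcal W^+(qt)$ therefore has $t^{m}$-coefficient $\sum_{k=0}^{m}\mathcal W_{-k}\mathcal W_{m+1-k}q^{m-2k}$; after multiplying by $-qt$, the resulting $t^n$-coefficient (for $n\ge 1$) picks out $m=n-1$, giving
\begin{align*}
-q\sum_{k=0}^{n-1}\mathcal W_{-k}\mathcal W_{n-k}\,q^{(n-1)-2k},
\end{align*}
which matches the second sum in \eqref{eq:ZnVee}. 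Adding the two contributions reproduces $\mathcal Z^\vee_n$ for each $n\ge 1$, and for $n=0$ the right-hand side contributes $\mathcal G_0\mathcal{\tilde G}_0=1$ (the $-qt$ term having no constant coefficient), which agrees with $\mathcal Z^\vee_0=1$.

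Since both sides are generating functions over $\mathcal U^+_q$ and their $t^n$-coefficients agree for every $n\in\mathbb N$, the two generating functions are equal, which is precisely \eqref{lem:zs}. The only genuine care required is the bookkeeping of the two substitutions $t\mapsto q^{-1}t$ and $t\mapsto qt$, together with the shift induced by the prefactor $qt$ in the second term; these account for the exponents $q^{n-2k}$ and $q^{n-1-2k}$ appearing in \eqref{eq:ZnVee}. I expect no real obstacle here: the lemma is essentially definitional, and the argument is a routine coefficient comparison, which is why the excerpt cites \cite[Lemma~A.8]{altCE} rather than giving an independent derivation.
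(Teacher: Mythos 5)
Your coefficient comparison is correct and is exactly the routine verification underlying this identity, which the paper does not reprove but simply cites from \cite[Lemma~A.8]{altCE}. The bookkeeping of the substitutions $t\mapsto q^{-1}t$, $t\mapsto qt$ and the shift from the prefactor $-qt$ all check out against \eqref{eq:ZnVee}, including the $n=0$ case.
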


\begin{lemma} \label{lem:Zfix} {\rm (See \cite[Lemma~5.2 and Proposition~8.3]{altCE}.)}
For $n\geq 1$ we have $\mathcal Z^\vee_n\in \mathcal Z$. Moreover $\mathcal Z^\vee_n$  fixed by $\sigma$ and $\dagger$ and $\tau$.
\end{lemma}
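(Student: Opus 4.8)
The plan is to prove Lemma \ref{lem:Zfix} in two stages: first establish centrality of each $\mathcal Z^\vee_n$, then establish the symmetry (fixed by $\sigma$, $\dagger$, $\tau$). For the symmetry part, the key observation is that since $\tau = \sigma \circ \dagger$ (Definition \ref{def:tauAc}), it suffices to check invariance under $\sigma$ and $\dagger$ separately, and then $\tau$-invariance follows automatically. The most economical route is to pass to generating functions and use the closed form \eqref{lem:zs} from Lemma \ref{lem:zV1}, namely $\mathcal Z^\vee(t) = \mathcal G(q^{-1}t)\mathcal{\tilde G}(qt) - qt\,\mathcal W^-(q^{-1}t)\mathcal W^+(qt)$.

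For the symmetry computation I would record how $\sigma$ and $\dagger$ act on the four generating functions. From Lemma \ref{lem:autAc}, $\sigma$ sends $\mathcal G(t) \leftrightarrow \mathcal{\tilde G}(t)$ and $\mathcal W^-(t) \leftrightarrow \mathcal W^+(t)$ coefficientwise (since $\mathcal W_{-k}\mapsto \mathcal W_{k+1}$ and $\mathcal G_{k+1}\mapsto \mathcal{\tilde G}_{k+1}$). Applying $\sigma$ to \eqref{lem:zs} and using that $\sigma$ is an algebra homomorphism yields $\sigma(\mathcal Z^\vee(t)) = \mathcal{\tilde G}(q^{-1}t)\mathcal G(qt) - qt\,\mathcal W^+(q^{-1}t)\mathcal W^-(qt)$; one must then check this equals $\mathcal Z^\vee(t)$, which follows from the commutation relations \eqref{eq:3pp10} (the $\mathcal G$'s commute, the $\mathcal{\tilde G}$'s commute), \eqref{eq:3pp11}, and \eqref{eq:3pp4}, \eqref{eq:3pp5} governing the $\mathcal W$'s, after matching coefficients. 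Similarly, $\dagger$ is an antiautomorphism fixing $\mathcal W^\pm(t)$ and swapping $\mathcal G(t)\leftrightarrow \mathcal{\tilde G}(t)$; applying $\dagger$ to \eqref{lem:zs} reverses each product, and one checks the result coincides with $\mathcal Z^\vee(t)$ using the same commutation relations. This is cleaner at the level of the defining formula \eqref{eq:ZnVee}: the symmetric form of the $q$-weights $q^{n-2k}$ and $q^{n-1-2k}$ under $k \mapsto n-k$ (resp. $k\mapsto n-1-k$) is exactly what makes both sums invariant once the generators are permuted.

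For centrality, I would argue that it suffices to show each $\mathcal Z^\vee_n$ commutes with the four alternating generators $\mathcal W_0$, $\mathcal W_1$, and more generally with all generators; since the relations in Definition \ref{def:Aq} generate everything, commuting with a generating set suffices. The most efficient device is to compute the bracket of the generating function $\mathcal Z^\vee(t)$ against, say, $\mathcal W_0$ and $\mathcal W_1$ (and against $\mathcal G(s)$, $\mathcal{\tilde G}(s)$, $\mathcal W^\pm(s)$), substituting \eqref{lem:zs} and expanding each bracket via the Leibniz rule, then feeding in the generating-function relations \eqref{eq:3pp1}--\eqref{eq:3pp11} from Lemma \ref{lem:ad}. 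The brackets $[\mathcal W_0, \mathcal G(t)]_q$, $[\mathcal W_0, \mathcal W^\pm(t)]$, etc., all have explicit right-hand sides, and the claim is that the two terms in \eqref{lem:zs} produce cancelling contributions.

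The main obstacle will be the centrality verification: it requires carefully tracking several $q$-commutator expansions and confirming that the $\mathcal G\mathcal{\tilde G}$ contribution cancels the $\mathcal W^-\mathcal W^+$ contribution for every generator. However, since Lemma \ref{lem:Zfix} is explicitly attributed to \cite[Lemma~5.2 and Proposition~8.3]{altCE}, I would expect the paper to simply cite those results rather than reprove them; the honest statement is that centrality is imported from \cite{altCE}, while the $\sigma$-, $\dagger$-, $\tau$-invariance is the part best verified here directly from \eqref{eq:ZnVee} or \eqref{lem:zs}, as sketched above.
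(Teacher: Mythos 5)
Your final paragraph is the correct reading of the situation: the paper offers no proof of Lemma \ref{lem:Zfix} at all, importing both the centrality and the invariance wholesale from \cite[Lemma~5.2, Proposition~8.3]{altCE}, so there is nothing in the paper to compare your computations against. Your sketch of the invariance part is sound: applying $\sigma$ (resp.\ $\dagger$) to \eqref{eq:ZnVee} or \eqref{lem:zs} and re-indexing, then invoking \eqref{eq:3p4}, \eqref{eq:3p5}, \eqref{eq:3p10}, \eqref{eq:3p11} to match the symmetrized sums, works, and $\tau$-invariance does follow for free from $\tau=\sigma\circ\dagger$. The soft spot is the centrality plan. You propose to commute $\mathcal Z^\vee(t)$ against every alternating generator by ``feeding in'' the relations \eqref{eq:3pp1}--\eqref{eq:3pp11}, but those defining relations pin down only the symmetrized combinations such as $s[\mathcal W^-(s),\mathcal G(t)]+t[\mathcal G(s),\mathcal W^-(t)]$, not the individual cross-commutators you would need in the Leibniz expansion when bracketing against $\mathcal G(s)$, $\mathcal{\tilde G}(s)$ or $\mathcal W^{\pm}(s)$ with $s\neq 0$; to carry out that computation you would first need the explicit product formulas of Lemma \ref{lem:redrel2A}, which are themselves nontrivial derived facts. (Commutation with $\mathcal W_0$ and $\mathcal W_1$ alone is directly checkable from \eqref{eq:3p1}--\eqref{eq:3p4}, but by itself does not give centrality without a generation statement that is not circular.) The mechanism actually used in \cite{altCE}, and quoted in the introduction of this paper, is cleaner: the isomorphism $\varphi:\mathcal U^+_q\to U^+_q\otimes\mathbb F[z_1,z_2,\ldots]$ sends $\mathcal Z^\vee_n\mapsto 1\otimes z^\vee_n$, which is visibly central in the target, and injectivity of $\varphi$ finishes the argument. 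So your proposal is acceptable as a gloss on an imported result, but the direct centrality verification as described would not go through without additional input.
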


\begin{definition}\rm Let $\langle \mathcal W_0, \mathcal W_1 \rangle$ denote the subalgebra of $\mathcal U^+_q$ generated by $\mathcal W_0$, $\mathcal W_1$.
\end{definition}

\begin{proposition} \label{lem:sum}
{\rm (See \cite[Section~6]{altCE}.)} For the algebra $\mathcal U^+_q$ the following {\rm (i)--(iii)} hold:
\begin{enumerate}
 \item[\rm (i)] 
there exists an algebra isomorphism
$U^+_q \to \langle \mathcal W_0, \mathcal W_1\rangle$ 
that sends $W_0\mapsto \mathcal W_0$ and
$W_1\mapsto \mathcal W_1$;
\item[\rm (ii)] 
there exists an algebra isomorphism
 $\mathbb F\lbrack z_1, z_2,\ldots \rbrack \to \mathcal Z$
 that sends $z_n \mapsto \mathcal Z^\vee_n$ for $n \geq 1$;
\item[\rm (iii)] 
the multiplication map 
\begin{align*}
\langle \mathcal W_0, \mathcal W_1\rangle 
\otimes
\mathcal Z
 & \to   \mathcal U^+_q
\\
 w \otimes z  &\mapsto  wz
 \end{align*}
 is an isomorphism of algebras.
 \end{enumerate}
\end{proposition}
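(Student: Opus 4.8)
The plan is to deduce all three parts from a single statement: that multiplication induces an \emph{algebra} isomorphism
$\Phi\colon U^+_q \otimes \mathbb F\lbrack z_1,z_2,\ldots\rbrack \to \mathcal U^+_q$ determined by $w \otimes z_n \mapsto \imath(w)\,\mathcal Z^\vee_n$, where $\imath$ is the injection of Lemma \ref{lem:iota}. Part (i) is then essentially free: the image of $\imath$ is the subalgebra generated by $\imath(W_0)=\mathcal W_0$ and $\imath(W_1)=\mathcal W_1$, namely $\langle \mathcal W_0,\mathcal W_1\rangle$, so the injectivity asserted in Lemma \ref{lem:iota} lets $\imath$ restrict to the desired isomorphism $U^+_q \to \langle \mathcal W_0,\mathcal W_1\rangle$. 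Since each $\mathcal Z^\vee_n$ is central (Lemma \ref{lem:Zfix}), the assignment $\Phi$ is a well-defined algebra homomorphism; I will show it is a bijection and then read off (ii) and (iii).

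\emph{Surjectivity.} I would first show that $\mathcal W_0,\mathcal W_1$ together with $\lbrace \mathcal Z^\vee_n\rbrace_{n\ge1}$ generate $\mathcal U^+_q$, by an induction recovering the four families of generators one degree at a time. Suppose $\mathcal W_n$ has already been produced. Relation \eqref{eq:3p1} (at $k=n-1$) expresses $\mathcal{\tilde G}_n-\mathcal G_n$ as a scalar multiple of $\lbrack \mathcal W_0,\mathcal W_n\rbrack$, while \eqref{eq:ZnVee} together with $\mathcal G_0=\mathcal{\tilde G}_0=1$ isolates $q^n\mathcal{\tilde G}_n+q^{-n}\mathcal G_n = \mathcal Z^\vee_n - (\text{products of earlier generators})$. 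The resulting $2\times2$ linear system has determinant $q^n+q^{-n}\ne 0$, as $q$ is not a root of unity, so $\mathcal G_n,\mathcal{\tilde G}_n$ lie in the subalgebra generated by $\mathcal W_0,\mathcal W_1,\lbrace\mathcal Z^\vee_m\rbrace$; then \eqref{eq:3p2} and \eqref{eq:3p3} produce $\mathcal W_{-n}$ and $\mathcal W_{n+1}$, feeding the next stage. Hence $\Phi$ is onto.

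\emph{Injectivity via graded dimensions.} Equip $\mathcal U^+_q$ with the $\mathbb N$-grading $\deg\mathcal W_{-k}=\deg\mathcal W_{k+1}=2k+1$ and $\deg\mathcal G_{k+1}=\deg\mathcal{\tilde G}_{k+1}=2k+2$; one checks that \eqref{eq:3p1}--\eqref{eq:3p11} are homogeneous and that $\mathcal Z^\vee_n$ is homogeneous of degree $2n$. There are exactly two generators in each degree $d\ge1$, so the PBW basis of Proposition \ref{lem:pbw} gives the (finite, degreewise) Hilbert series $\prod_{d\ge1}(1-x^d)^{-2}$. Under the grading of $U^+_q$ by total degree in $W_0,W_1$ (which $\imath$ preserves), Damiani's PBW basis (Proposition \ref{prop:PBWbasis}) places $E_{n\delta+\alpha_0},E_{n\delta+\alpha_1}$ in degree $2n+1$ and $E_{n\delta}$ in degree $2n$, while $\mathbb F\lbrack z_1,z_2,\ldots\rbrack$ with $\deg z_n=2n$ contributes $\prod_{n\ge1}(1-x^{2n})^{-1}$; the product of the two is again $\prod_{d\ge1}(1-x^d)^{-2}$. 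Thus $\Phi$ is a degree-preserving surjection between graded vector spaces with equal finite graded dimensions, hence a bijection, hence an algebra isomorphism. Reading this off: $p\mapsto \Phi(1\otimes p)$ is injective, so $\lbrace\mathcal Z^\vee_n\rbrace$ are algebraically independent, and the multiplication map $\langle\mathcal W_0,\mathcal W_1\rangle\otimes\mathcal Z' \to \mathcal U^+_q$ is injective, where $\mathcal Z':=\mathbb F\lbrack \mathcal Z^\vee_1,\mathcal Z^\vee_2,\ldots\rbrack$. This gives (iii) with $\mathcal Z'$ in place of $\mathcal Z$, and the injectivity half of (ii).

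\emph{Finishing: $\mathcal Z'=\mathcal Z$.} By Lemma \ref{lem:Zfix} we have $\mathcal Z'\subseteq\mathcal Z$. For the reverse inclusion I would transport the center through $\Phi$: using the standard identity $Z(A\otimes_{\mathbb F}\mathbb F\lbrack z_1,z_2,\ldots\rbrack)=Z(A)\otimes\mathbb F\lbrack z_1,z_2,\ldots\rbrack$ one gets $\mathcal Z=\Phi\bigl(Z(U^+_q)\otimes\mathbb F\lbrack z_1,z_2,\ldots\rbrack\bigr)$, so the claim reduces to $Z(U^+_q)=\mathbb F\,1$. I expect this to be the main obstacle, as it is the one input not delivered by the PBW/Hilbert-series machinery above: it would be established from the root-lattice grading of $U^+_q$ (a homogeneous central element commutes with $W_0,W_1$, and positive degree is excluded using the reduction rules or the shuffle embedding $\natural$), or invoked as the known triviality of the center of the positive part of $U_q(\widehat{\mathfrak{sl}}_2)$. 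Granting it, $\mathcal Z=\mathcal Z'$ and all three assertions follow at once.
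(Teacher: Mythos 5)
The paper supplies no argument for this proposition: it is quoted wholesale from \cite{altCE}, where it is assembled from the isomorphism $\varphi:\mathcal U^+_q\to U^+_q\otimes\mathbb F\lbrack z_1,z_2,\ldots\rbrack$ together with the identification of the center. Your map $\Phi$ is essentially that isomorphism run in reverse, and the two steps you actually carry out are sound. The surjectivity induction works: at stage $n$ the relation \eqref{eq:3p1} gives $\mathcal{\tilde G}_n-\mathcal G_n$ while \eqref{eq:ZnVee} gives $q^n\mathcal{\tilde G}_n+q^{-n}\mathcal G_n$ modulo previously produced elements, the determinant $q^n+q^{-n}$ is nonzero, and \eqref{eq:3p2}, \eqref{eq:3p3} then yield $\mathcal W_{-n}$, $\mathcal W_{n+1}$. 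The Hilbert-series comparison is also correct: the grading you propose makes \eqref{eq:3p1}--\eqref{eq:3p11} homogeneous with $\mathcal Z^\vee_n$ of degree $2n$, Proposition \ref{lem:pbw} gives $\prod_{d\geq 1}(1-x^d)^{-2}$ for $\mathcal U^+_q$, Damiani's basis gives the matching series for $U^+_q\otimes\mathbb F\lbrack z_1,z_2,\ldots\rbrack$, and a degree-preserving surjection between spaces with equal finite graded dimensions is bijective. Note that part (i) is in any case just Lemma \ref{lem:iota} restated, and your argument reproves its injectivity as a byproduct.

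The genuine gap is the one you flag at the end: your argument establishes (ii) and (iii) only with $\mathcal Z'=\mathbb F\lbrack\mathcal Z^\vee_1,\mathcal Z^\vee_2,\ldots\rbrack$ in place of the full center $\mathcal Z$, and upgrading $\mathcal Z'$ to $\mathcal Z$ requires $Z(U^+_q)=\mathbb F 1$. Your reduction via $Z(A\otimes\mathbb F\lbrack z_1,z_2,\ldots\rbrack)=Z(A)\otimes\mathbb F\lbrack z_1,z_2,\ldots\rbrack$ is valid (write a central element as $\sum_i a_i\otimes p_i$ with the $p_i$ linearly independent and conclude each $a_i$ is central), but the triviality of $Z(U^+_q)$ is precisely the content of \cite[Lemma~5.10]{altCE} transported through $\varphi$, so ``invoking the known result'' here comes uncomfortably close to assuming a piece of what is being proved. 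To close the gap you would need to execute the sketch you allude to: a central element of $U^+_q$ may be taken homogeneous for the $\mathbb N^2$-grading by $(\deg_{W_0},\deg_{W_1})$, and a nonzero homogeneous element of nonzero degree cannot commute with both $W_0$ and $W_1$ (extractable from the reduction rules in Lemmas \ref{lem:dam2}--\ref{lem:com2}, or from the shuffle embedding $\natural$). As written, parts (ii) and (iii) are not fully established; everything else checks out.
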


\noindent Note that the isomorphism in Proposition \ref{lem:sum}(i) is induced by the map $\imath$ from Lemma \ref{lem:iota}.


\noindent We emphasize a few points.
\begin{corollary}\label{cor:sum} For the algebra $\mathcal U^+_q$
 the following {\rm (i)--(iii)} hold:
 \begin{enumerate}
 \item[\rm (i)] the algebra $\mathcal U^+_q$ is generated by $\mathcal W_0$, $\mathcal W_1$, $\mathcal Z$;
 \item[\rm (ii)] the elements $\lbrace \mathcal Z^\vee_n \rbrace_{n=1}^\infty$ are algebraically independent and generate $\mathcal Z$;
 \item[\rm (iii)] everything in $\mathcal Z$ is fixed by $\sigma$ and $\dagger$ and $\tau$.
 \end{enumerate}
 \end{corollary}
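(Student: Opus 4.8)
The plan is to deduce all three statements directly from Proposition \ref{lem:sum}, with Lemma \ref{lem:Zfix} supplying the extra input needed for (iii). First I would prove (i): by Proposition \ref{lem:sum}(iii) the multiplication map $\langle \mathcal W_0, \mathcal W_1\rangle \otimes \mathcal Z \to \mathcal U^+_q$ is an algebra isomorphism, so $\mathcal U^+_q$ is generated by the subalgebra $\langle \mathcal W_0, \mathcal W_1\rangle$ together with $\mathcal Z$. Since $\langle \mathcal W_0, \mathcal W_1\rangle$ is by definition generated by $\mathcal W_0$ and $\mathcal W_1$, claim (i) follows at once.

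Next I would obtain (ii) by transporting the relevant properties across the isomorphism in Proposition \ref{lem:sum}(ii). That isomorphism $\mathbb F\lbrack z_1, z_2,\ldots\rbrack \to \mathcal Z$ sends $z_n \mapsto \mathcal Z^\vee_n$. In the polynomial algebra the indeterminates $z_1, z_2, \ldots$ are algebraically independent and generate $\mathbb F\lbrack z_1, z_2,\ldots\rbrack$; an algebra isomorphism preserves both properties, so the images $\lbrace \mathcal Z^\vee_n\rbrace_{n=1}^\infty$ are algebraically independent and generate $\mathcal Z$.

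Finally, for (iii) I would combine (ii) with Lemma \ref{lem:Zfix}. By Lemma \ref{lem:Zfix} each generator $\mathcal Z^\vee_n$ with $n\geq 1$ is fixed by $\sigma$, $\dagger$, $\tau$; by (ii) these generators generate $\mathcal Z$, so every element of $\mathcal Z$ is a polynomial in them. As $\sigma$ is an algebra automorphism fixing the generators, it fixes every such polynomial. The one point requiring care is that $\dagger$ and $\tau$ are \emph{antiautomorphisms}, so a priori they reverse the order of a product $\mathcal Z^\vee_{n_1}\cdots \mathcal Z^\vee_{n_k}$; however $\mathcal Z$ is the center of $\mathcal U^+_q$ and is in particular commutative, so the reversed product equals the original, and $\dagger$, $\tau$ therefore fix every element of $\mathcal Z$ as well. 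This commutativity observation is the only subtlety; the remainder of the argument is routine bookkeeping once Proposition \ref{lem:sum} and Lemma \ref{lem:Zfix} are in hand.
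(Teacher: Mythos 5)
Your proposal is correct and follows exactly the paper's own route: (i) from Proposition \ref{lem:sum}(iii), (ii) from Proposition \ref{lem:sum}(ii), and (iii) from (ii) together with Lemma \ref{lem:Zfix}. The paper leaves these deductions as one-line citations; your elaboration, including the observation that commutativity of $\mathcal Z$ handles the antiautomorphisms $\dagger$ and $\tau$, is the intended (and correct) filling-in of those details.
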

 \begin{proof} (i) By Proposition  \ref{lem:sum}(iii). \\
 \noindent (ii) By Proposition  \ref{lem:sum}(ii). \\
 \noindent (iii) By (ii) above and Lemma  \ref{lem:Zfix}.
 \end{proof}

\section{Comparing the generating functions for $U^+_q$ and $\mathcal U^+_q$}
\noindent In this section we investigate how the generating functions $E^\pm(t)$, $E(t)$ for $U^+_q$
are related to the generating functions $\mathcal W^\pm(t)$, $\mathcal G(t)$, $\mathcal {\tilde G}(t)$ for $\mathcal U^+_q$.
\medskip

\noindent Throughout this section, we identify $U^+_q$ with $\langle \mathcal W_0, \mathcal W_1 \rangle $ via the map $\imath$ from 
Lemma \ref{lem:iota}. For notational convenience define
\begin{align}
\xi = -q^2(q-q^{-1})^{-2}.
\end{align}

\begin{proposition} \label{lem:zzznote}
For the algebra $\mathcal U^+_q$,
\begin{align}
\label{eq:long1}
\mathcal W^-(t) = E^-(q \xi t) \mathcal {\tilde G}(t) = \mathcal {\tilde G}(t) E^-(q^{-1} \xi t),
\\
\mathcal W^+(t) = E^+(q^{-1} \xi t) \mathcal {\tilde G}(t)= \mathcal {\tilde G}(t)E^+(q \xi t).
\label{eq:long2}
\end{align}
\end{proposition}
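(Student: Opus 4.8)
The plan is to first collapse the four identities \eqref{eq:long1}, \eqref{eq:long2} to the two in \eqref{eq:long1} using symmetry, and then to transport everything through the isomorphism $\varphi$ so that the claim becomes a statement inside $U^+_q$ alone.

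First I would invoke the antiautomorphism $\tau$ of $\mathcal U^+_q$ from Definition \ref{def:tauAc}. Being an antiautomorphism, $\tau$ reverses products; it fixes each $\mathcal {\tilde G}_n$, hence fixes $\mathcal {\tilde G}(t)$; it sends $\mathcal W_{-n}\mapsto \mathcal W_{n+1}$, hence $\mathcal W^-(t)\mapsto \mathcal W^+(t)$; and by Lemma \ref{lem:asym2} together with the $\tau$-square of Lemma \ref{lem:diag} it sends $E_{n\delta+\alpha_0}\mapsto E_{n\delta+\alpha_1}$, hence $E^-(ct)\mapsto E^+(ct)$ for any scalar $c$. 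Applying $\tau$ to $\mathcal W^-(t)=E^-(q\xi t)\mathcal {\tilde G}(t)$ gives $\mathcal W^+(t)=\mathcal {\tilde G}(t)E^+(q\xi t)$, and applying it to $\mathcal W^-(t)=\mathcal {\tilde G}(t)E^-(q^{-1}\xi t)$ gives $\mathcal W^+(t)=E^+(q^{-1}\xi t)\mathcal {\tilde G}(t)$; these are exactly the two equalities of \eqref{eq:long2}. Thus it suffices to prove \eqref{eq:long1}. I would also record that $\mathcal {\tilde G}(t)$ is invertible as a generating function, by Lemma \ref{lem:inv}, since $\mathcal {\tilde G}(0)=1$.

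Next I would apply the isomorphism $\varphi:\mathcal U^+_q\to U^+_q\otimes \mathbb F\lbrack z_1,z_2,\ldots\rbrack$. Writing $\tilde z(t)=1\otimes \sum_{k\in\mathbb N}z_k t^k$, the displayed formulas for $\varphi$ give $\varphi(\mathcal W^-(t))=\bigl(\sum_m W_{-m}t^m\otimes 1\bigr)\tilde z(t)$ and $\varphi(\mathcal {\tilde G}(t))=\bigl(\sum_m \tilde G_m t^m\otimes 1\bigr)\tilde z(t)$, whereas $\varphi(E^-(q\xi t))=E^-(q\xi t)\otimes 1$, because $E^-$ is assembled from $\mathcal W_0,\mathcal W_1$ and $\varphi$ sends these to $W_0\otimes 1$, $W_1\otimes 1$. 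The factor $\tilde z(t)$ is central and invertible, so after cancelling it the first equality of \eqref{eq:long1} reduces to the identity $\sum_m W_{-m}t^m=E^-(q\xi t)\sum_m \tilde G_m t^m$ in $U^+_q$, and the second reduces to its right-handed mirror $\sum_m W_{-m}t^m=\bigl(\sum_m \tilde G_m t^m\bigr)E^-(q^{-1}\xi t)$. In this way the whole proposition collapses to one relation, internal to $U^+_q$, between the Damiani generating function $E^-$ and the alternating generators $\lbrace W_{-m}\rbrace$, $\lbrace \tilde G_m\rbrace$.

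Finally I would establish this $U^+_q$ relation, and I expect it to be the main obstacle. The natural tool is Rosso's injection $\natural$ into the $q$-shuffle algebra $\mathbb V$, under which the alternating elements have the explicit word images recorded in the Introduction, namely $W_{-m}\mapsto x(yx)^m$ and $\tilde G_m\mapsto (xy)^m$, while $\natural(E_{n\delta+\alpha_0})$ is computed from the Damiani recursion \eqref{eq:dam1introalt}. The claim then becomes a word identity to the effect that the $q$-shuffle product of $\sum_n \natural(E_{n\delta+\alpha_0})(q\xi)^n t^n$ with $\sum_m \natural(\tilde G_m)t^m$ equals $\sum_m \natural(W_{-m})t^m$; the difficulty is to produce a usable closed form for $\natural(E_{n\delta+\alpha_0})$ and to control the shuffle weights across all degrees, which is precisely where the value $\xi=-q^2(q-q^{-1})^{-2}$ is forced. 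As a sanity check, the degree-one coefficient is equivalent to $\lbrack \tilde G_1,W_0\rbrack=(q-q^{-1})\xi E_{\delta+\alpha_0}$, which one verifies directly in $\mathbb V$ and which does pin down $\xi$. Equivalently, this $U^+_q$ relation is the content of the correspondence between the Damiani and alternating PBW descriptions of $U^+_q$ from \cite[Section~11]{alternating}, and I would cite it there to complete the argument.
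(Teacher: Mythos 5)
Your argument is sound and reaches the right conclusion, but it is organized quite differently from the paper's proof, and in the end both arguments lean on an external citation for the substantive identity. The paper simply quotes the two \emph{left-hand} equalities of \eqref{eq:long1}, \eqref{eq:long2} from equations (15) and (14) of \cite{compactUqp} and then applies $\tau$ to produce the two right-hand equalities; your $\tau$-reduction runs the other way (both equalities of \eqref{eq:long2} from the two equalities of \eqref{eq:long1}), which is equally valid since $\tau$ reverses products, fixes $\mathcal {\tilde G}(t)$, sends $\mathcal W^-(t)\mapsto\mathcal W^+(t)$, and swaps $E^-\leftrightarrow E^+$ by Lemmas \ref{lem:asym2} and \ref{lem:diag}. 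What you add that the paper does not is the passage through $\varphi$: since $\varphi(\mathcal W^-(t))$ and $\varphi(\mathcal {\tilde G}(t))$ each factor as a $U^+_q$-part times the same central invertible series $1\otimes\sum_k z_kt^k$, while $\varphi(E^-(q\xi t))=E^-(q\xi t)\otimes 1$, the proposition collapses to an identity entirely inside $U^+_q$ relating the Damiani elements to the alternating elements of \cite{alternating}. This is a genuine conceptual gain (it makes visible that the central extension contributes nothing to the statement), at the cost of an extra reduction step. The one soft spot is the final step: your shuffle-algebra computation is only sketched and you yourself flag it as the main obstacle, so the proof as written stands or falls on whether \cite[Section~11]{alternating} contains the identity $\sum_m W_{-m}t^m=E^-(q\xi t)\sum_m\tilde G_m t^m$ (and its right-handed mirror) in citable form. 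That is the same kind of reliance the paper places on \cite{compactUqp}, so I do not count it as a gap, but you should check the precise form of the formulas in that reference (they may express the $E$'s in terms of the $W$'s and $\tilde G$'s rather than the reverse, in which case you would additionally invoke the invertibility of $\tilde G(t)$, which you have already justified via Lemma \ref{lem:inv}).
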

\begin{proof} The equation on the left in \eqref{eq:long1} (resp. \eqref{eq:long2}) is equation (15) (resp. equation (14)) in \cite{compactUqp}, expressed in terms of generating functions.
Using the antiautomorphism $\tau$ we get the equations on the right in \eqref{eq:long1}, \eqref{eq:long2}.
\end{proof}

\noindent In the next two results we give some consequences of Proposition  \ref{lem:zzznote}.

\begin{proposition}\label{prop:extra1} For the algebra $\mathcal U^+_q$,
\begin{align}
&\mathcal {\tilde G}(t)  \mathcal W_0 = \Bigl(q^{-2} \mathcal W_0 + q^{-1}(q-q^{-1}) E^-(q\xi t)\Bigr) \mathcal {\tilde G}(t), \label{eq:four1}
\\
&\mathcal {\tilde G}(t)  \mathcal W_1 = \Bigl(q^{2} \mathcal W_1 - q(q-q^{-1}) E^+(q^{-1}\xi t)\Bigr) \mathcal {\tilde G}(t) \label{eq:four2}
\end{align}
and also
\begin{align}
&\mathcal W_0 \mathcal {\tilde G}(t) = \mathcal {\tilde G}(t) \Bigl(q^{2} \mathcal W_0 - q(q-q^{-1}) E^-(q^{-1}\xi t)\Bigr), \label{eq:four3}
\\
&\mathcal W_1 \mathcal {\tilde G}(t)  = \mathcal {\tilde G}(t) \Bigl(q^{-2} \mathcal W_1 + q^{-1}(q-q^{-1}) E^+(q\xi t)\Bigr). \label{eq:four4}
\end{align}
\end{proposition}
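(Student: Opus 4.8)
The plan is to derive all four identities by combining the $q$-commutator relations of Lemma \ref{lem:ad} with the two factorizations of $\mathcal W^\pm(t)$ furnished by Proposition \ref{lem:zzznote}. Recall that $\lbrack X,Y\rbrack_q = qXY-q^{-1}YX$, so each of the middle equalities in \eqref{eq:3pp2} and \eqref{eq:3pp3} is a single linear relation between the two orderings of a product. Substituting the appropriate form of $\mathcal W^-(t)$ or $\mathcal W^+(t)$ and then solving for the desired product will give the stated identity directly.

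First I would prove \eqref{eq:four1}. I start from $\lbrack \mathcal{\tilde G}(t),\mathcal W_0\rbrack_q=(q-q^{-1})\mathcal W^-(t)$ in \eqref{eq:3pp2}, that is, $q\mathcal{\tilde G}(t)\mathcal W_0-q^{-1}\mathcal W_0\mathcal{\tilde G}(t)=(q-q^{-1})\mathcal W^-(t)$. Into the right-hand side I substitute the \emph{left} factorization $\mathcal W^-(t)=E^-(q\xi t)\mathcal{\tilde G}(t)$ from \eqref{eq:long1}, and then divide by $q$ and collect $\mathcal{\tilde G}(t)$ on the right, which yields exactly \eqref{eq:four1}. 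For \eqref{eq:four3} I would use the \emph{same} relation from \eqref{eq:3pp2}, but now substitute the \emph{right} factorization $\mathcal W^-(t)=\mathcal{\tilde G}(t)E^-(q^{-1}\xi t)$ from \eqref{eq:long1} and instead solve for $\mathcal W_0\mathcal{\tilde G}(t)$; factoring $\mathcal{\tilde G}(t)$ out on the left produces \eqref{eq:four3}.

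The remaining identities \eqref{eq:four2} and \eqref{eq:four4} can be obtained the same way, now starting from $\lbrack \mathcal W_1,\mathcal{\tilde G}(t)\rbrack_q=(q-q^{-1})\mathcal W^+(t)$ in \eqref{eq:3pp3} and substituting the left and right factorizations of $\mathcal W^+(t)$ from \eqref{eq:long2}. Alternatively, and more economically, I would deduce them by applying the antiautomorphism $\tau$ of Definition \ref{def:tauAc}: by Lemma \ref{lem:diag} the map $\tau$ restricts to the antiautomorphism $\tau$ of $U^+_q$, which by Lemma \ref{lem:asym2} interchanges $E^-(t)\leftrightarrow E^+(t)$, while $\tau$ fixes each $\mathcal{\tilde G}_{k+1}$ and swaps $\mathcal W_0\leftrightarrow\mathcal W_1$. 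Applying $\tau$ to \eqref{eq:four1} (reversing the order of each product, since $\tau$ is an antiautomorphism) gives \eqref{eq:four4}, and applying $\tau$ to \eqref{eq:four3} gives \eqref{eq:four2}.

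I expect no genuine obstacle: the argument is bookkeeping around the definition of $\lbrack\,,\,\rbrack_q$ together with correctly tracking the $q^{\pm1}\xi$ rescalings in Proposition \ref{lem:zzznote}. The only point requiring care is choosing, in each case, the left versus the right factorization of $\mathcal W^\pm(t)$: the left factorization collects $\mathcal{\tilde G}(t)$ on the right and hence gives the two identities \eqref{eq:four1}, \eqref{eq:four2} in which the scalar operator multiplies $\mathcal{\tilde G}(t)$ from the left, whereas the right factorization gives \eqref{eq:four3}, \eqref{eq:four4}. As a final safeguard I would verify the powers of $q$ (in particular the passage from the coefficient $q$ in the $q$-commutator to the $q^{-2}$ or $q^2$ appearing in the answer) by comparing the $t^0$ terms, using $\mathcal{\tilde G}(0)=1$, $E^-(0)=\mathcal W_0$, and $E^+(0)=\mathcal W_1$; for instance in \eqref{eq:four1} the constant term of the right-hand side is $\bigl(q^{-2}+q^{-1}(q-q^{-1})\bigr)\mathcal W_0=\mathcal W_0$, matching the left-hand side.
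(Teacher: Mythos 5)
Your proposal is correct and takes essentially the same route as the paper: the paper's proof also starts from $\lbrack \mathcal {\tilde G}(t), \mathcal W_0 \rbrack_q = (q-q^{-1}) \mathcal W^-(t)$ and $\lbrack \mathcal W_1, \mathcal {\tilde G}(t) \rbrack_q = (q-q^{-1}) \mathcal W^+(t)$ from \eqref{eq:3pp2}, \eqref{eq:3pp3}, eliminates $\mathcal W^\pm(t)$ using the two factorizations in Proposition \ref{lem:zzznote}, and simplifies. Your choice of left versus right factorization in each case, the $t=0$ sanity check, and the optional $\tau$-symmetry shortcut are all sound.
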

\begin{proof} By \eqref{eq:3pp2}, \eqref{eq:3pp3}  we have
\begin{align*}
\lbrack \mathcal {\tilde G}(t), \mathcal W_0 \rbrack_q = (q-q^{-1}) \mathcal W^-(t), \qquad \qquad 
\lbrack \mathcal W_1, \mathcal {\tilde G}(t) \rbrack_q = (q-q^{-1}) \mathcal W^+(t).
\end{align*}
In these equations, eliminate  $\mathcal W^-(t)$ and  $\mathcal W^+(t)$ using Proposition  \ref{lem:zzznote}, and simplify the result.
\end{proof}

\begin{proposition} \label{lem:ex2}
For the algebra $\mathcal U^+_q$,
\begin{align}
\label{eq:GF1}
 \mathcal G(t)&=\Bigl(q^2 t E^-(q\xi t) E^+(q^{-1}\xi t) -(q-q^{-1}) E (q \xi t) \Bigr) \mathcal {\tilde G}(t)
 \\
 &=\Bigl( t E^+(q^{-1}\xi t) E^-(q\xi t) -(q-q^{-1}) E (q^{-1} \xi t) \Bigr) \mathcal {\tilde G}(t) \label{eq:GF2}
 \\
  &=\mathcal {\tilde G}(t) \Bigl(q^2 t E^-(q^{-1} \xi t) E^+(q\xi t) -(q-q^{-1}) E (q \xi t) \Bigr) \label{eq:GF3}
 \\
 &= \mathcal {\tilde G}(t) \Bigl( t E^+(q\xi t) E^-(q^{-1}\xi t) -(q-q^{-1}) E (q^{-1} \xi t) \Bigr). \label{eq:GF4}
\end{align}
\end{proposition}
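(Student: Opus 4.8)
The plan is to start from the generating-function form of the center, namely Lemma~\ref{lem:zV1}, which gives
\begin{align*}
\mathcal Z^\vee(t) = \mathcal G(q^{-1}t)\mathcal{\tilde G}(qt) - qt\,\mathcal W^-(q^{-1}t)\mathcal W^+(qt),
\end{align*}
and to substitute the expressions for $\mathcal W^-$ and $\mathcal W^+$ from Proposition~\ref{lem:zzznote}. Concretely, I would use the \emph{right-hand} factorizations $\mathcal W^-(t)=\mathcal{\tilde G}(t)E^-(q^{-1}\xi t)$ and $\mathcal W^+(t)=\mathcal{\tilde G}(t)E^+(q\xi t)$, evaluated at the arguments $q^{-1}t$ and $qt$ respectively, so that the last term of $\mathcal Z^\vee(t)$ becomes $qt\,\mathcal{\tilde G}(q^{-1}t)E^-(\xi t)E^+(\xi t)\mathcal{\tilde G}(qt)$. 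The goal is to solve for $\mathcal G(q^{-1}t)\mathcal{\tilde G}(qt)$ and thereby obtain a closed formula for $\mathcal G$ in terms of $E^\pm$, $E$, and $\mathcal{\tilde G}$.

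The key algebraic input is then Proposition~\ref{prop:New}, which in generating-function form reads $\lbrack E^-(s),E^+(t)\rbrack_q = -q\,(E(s)-E(t))/(s-t)$. Taking the limit $s\to t$ (or reading off the diagonal), this $q$-commutator relation lets me replace the product $E^-(\xi t)E^+(\xi t)$ by a combination of $E^+(\xi t)E^-(\xi t)$ and the coincidence limit of $E(\cdot)$, which produces a term proportional to $E(\xi t)$. First I would carry out this substitution to reorder the $E^-E^+$ product into the normal form $E^+E^-$ appearing in \eqref{eq:GF2}, \eqref{eq:GF4}; the factors of $q^2 t$ versus $t$ distinguishing \eqref{eq:GF1}/\eqref{eq:GF3} from \eqref{eq:GF2}/\eqref{eq:GF4} are exactly the bookkeeping of which ordering is chosen, and the additive $E(q\xi t)$ versus $E(q^{-1}\xi t)$ shift comes from which argument survives the $q$-commutator limit.

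The four asserted equalities split naturally into two pairs. The pair \eqref{eq:GF1}--\eqref{eq:GF2} both have $\mathcal{\tilde G}(t)$ on the \emph{right}; these I obtain directly from the right-factorization substitution above, with \eqref{eq:GF1} and \eqref{eq:GF2} differing only by the commutator reordering of $E^-E^+$. The pair \eqref{eq:GF3}--\eqref{eq:GF4} have $\mathcal{\tilde G}(t)$ on the \emph{left}; the cleanest route to these is to apply the antiautomorphism $\tau$ (Definition~\ref{def:tauAc}) to \eqref{eq:GF1}--\eqref{eq:GF2}, using Lemma~\ref{lem:asym2} (so $\tau$ swaps $E^-\leftrightarrow E^+$, fixes $E$) together with the fact that $\tau$ fixes $\mathcal G$, $\mathcal{\tilde G}$, reverses products, and fixes the central $\mathcal Z^\vee$ by Lemma~\ref{lem:Zfix}. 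Alternatively one repeats the substitution using the \emph{left} factorizations $\mathcal W^\pm(t)=E^\pm(\cdots)\mathcal{\tilde G}(t)$; I would state the $\tau$-argument as primary and note the direct computation as a check.

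The main obstacle I anticipate is the careful tracking of the rescaled arguments. Each occurrence of $\mathcal W^\pm$ is evaluated at $q^{\pm 1}t$, and Proposition~\ref{lem:zzznote} then introduces further factors of $q^{\pm1}\xi$ inside the $E^\pm$ arguments, so I must verify that the net argument of $E^-$ and of $E^+$ really does collapse to $\xi t$ in the cross term (making the $q$-commutator usable) while the surviving $\mathcal{\tilde G}$ factors retain arguments $q^{-1}t$ and $qt$ — which must finally be reconciled against the $\mathcal{\tilde G}(t)$ appearing in the statement. I expect this reconciliation of $\mathcal{\tilde G}(q^{\pm1}t)$ against $\mathcal{\tilde G}(t)$ to require the commutation relations between $\mathcal{\tilde G}$ and $E^\pm$ implicit in Proposition~\ref{prop:extra1}, or an intermediate substitution $t\mapsto t$ normalization; getting every power of $q$ and every sign consistent across all four equations is the delicate part, whereas the structural derivation is routine once the substitution and the single $q$-commutator step are in place.
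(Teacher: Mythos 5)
Your proposal has a genuine gap: the route through $\mathcal Z^\vee(t)$ is circular. Substituting Proposition \ref{lem:zzznote} into Lemma \ref{lem:zV1} gives an identity of the form
\begin{align*}
\mathcal G(q^{-1}t)\,\mathcal {\tilde G}(qt) \;=\; \mathcal Z^\vee(t) \;+\; qt\,\mathcal {\tilde G}(q^{-1}t)\,E^-(q^{-2}\xi t)\,E^+(\xi t)\,\mathcal {\tilde G}(qt),
\end{align*}
which still contains $\mathcal Z^\vee(t)$. To convert this into \eqref{eq:GF1}--\eqref{eq:GF4} you would need to know that $\mathcal Z^\vee(t)=-(q-q^{-1})\mathcal {\tilde G}(q^{-1}t)E(\xi t)\mathcal {\tilde G}(qt)$, i.e.\ Proposition \ref{thm:Mn} --- but in the paper that factorization is \emph{deduced from} the present proposition (its proof invokes \eqref{eq:GF3}). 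Proposition \ref{prop:New} cannot fill this hole: it only relates the two orderings $E^-E^+$ and $E^+E^-$ to each other (and indeed suffices to pass from \eqref{eq:GF1} to \eqref{eq:GF2}, exactly as the paper does), but it does not pin down the absolute relation between $\mathcal G(t)$ and $E(q\xi t)$. The ingredient you are missing is the defining relation \eqref{eq:3pp1}, $\lbrack \mathcal W^-(t),\mathcal W_1\rbrack=(1-q^{-2})t^{-1}(\mathcal{\tilde G}(t)-\mathcal G(t))$: the paper eliminates $\mathcal W^-(t)$ via \eqref{eq:long1}, moves $\mathcal W_1$ past $\mathcal{\tilde G}(t)$ via \eqref{eq:four2}, and then uses the $U^+_q$-level relation \eqref{eq:L2} for $\lbrack E^-(q\xi t),\mathcal W_1\rbrack_q$; that last step is the only place $E(\cdot)$ enters from first principles.

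Two smaller points. First, your bookkeeping of arguments is off: with the factorizations you chose, the cross term is $\mathcal{\tilde G}(q^{-1}t)E^-(q^{-2}\xi t)E^+(\xi t)\mathcal{\tilde G}(qt)$, so the arguments of $E^-$ and $E^+$ do not coincide and there is no ``coincidence limit'' (Proposition \ref{prop:New} would still apply with distinct arguments, but this does not rescue the circularity). Second, your use of $\tau$ to obtain \eqref{eq:GF3}, \eqref{eq:GF4} from \eqref{eq:GF1}, \eqref{eq:GF2} is correct and is precisely what the paper does.
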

\begin{proof} We first show \eqref{eq:GF1}. By \eqref{eq:3pp1},
\begin{align*}
\mathcal W^-(t) \mathcal W_1 - \mathcal W_1 \mathcal W^-(t) = (1-q^{-2})t^{-1} (\mathcal {\tilde G}(t)-\mathcal G(t)).
\end{align*}
In this equation, eliminate $\mathcal W^-(t)$ using the equation on the left in \eqref{eq:long1}. Evaluate the resulting equation using \eqref{eq:four2}.
In the resulting equation, eliminate $\lbrack E^-(q \xi t), \mathcal W_1 \rbrack_q$ using   \eqref{eq:L2}. The resulting
equation becomes \eqref{eq:GF1} after simplification.
We have shown \eqref{eq:GF1}. 
The right-hand sides of \eqref{eq:GF1}, \eqref{eq:GF2} are equal by
Proposition \ref{prop:New}, so \eqref{eq:GF2} holds. Using $\tau$ we obtain
\eqref{eq:GF3}, \eqref{eq:GF4}.
\end{proof}
\begin{remark}\rm The above Propositions  \ref{lem:zzznote},  \ref{lem:ex2} are variations on 
\cite[Proposition~5.18]{basFMA} and \cite[Proposition~5.20]{basFMA}.
\end{remark}

\noindent We have a comment.   The generating function $\mathcal {\tilde G}(t)$ is invertible
by Lemma \ref{lem:inv}
and $\mathcal {\tilde G}_0=1$.
\begin{lemma}\label{lem:gi}
For the algebra $\mathcal U^+_q$,
\begin{align}
\bigl(\mathcal {\tilde G}(t)\bigr)^{-1} \mathcal W^-(t) &= \mathcal W^-(q^{-2}t) \bigl(\mathcal {\tilde G}(q^{-2}t)\bigr)^{-1}, \label{eq:gi1}
\\
\bigl(\mathcal {\tilde G}(t)\bigr)^{-1} \mathcal W^+(t) &=
 \mathcal W^+(q^2t) \bigl(\mathcal {\tilde G}(q^2t)\bigr)^{-1}. \label{eq:gi2}
\end{align}
\end{lemma}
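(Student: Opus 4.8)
The plan is to derive both \eqref{eq:gi1} and \eqref{eq:gi2} from the two factorizations of $\mathcal W^-(t)$ and $\mathcal W^+(t)$ recorded in Proposition \ref{lem:zzznote}. The key observation is that each of $\mathcal W^-(t)$, $\mathcal W^+(t)$ is expressed in \emph{two} ways as a product of a $U^+_q$-generating function with $\mathcal {\tilde G}(t)$, once on the right and once on the left, where the two $U^+_q$-factors differ only by a rescaling of the argument by $q^{\pm 2}$. I would first focus on \eqref{eq:gi1}. From the two expressions in \eqref{eq:long1},
\begin{align*}
E^-(q\xi t)\,\mathcal {\tilde G}(t) = \mathcal {\tilde G}(t)\,E^-(q^{-1}\xi t),
\end{align*}
so $\bigl(\mathcal {\tilde G}(t)\bigr)^{-1} E^-(q\xi t)\,\mathcal {\tilde G}(t) = E^-(q^{-1}\xi t)$; this is the commutation datum that will drive the computation.

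Next I would compute $\bigl(\mathcal {\tilde G}(t)\bigr)^{-1}\mathcal W^-(t)$ directly. Using the left factorization $\mathcal W^-(t)=E^-(q\xi t)\,\mathcal {\tilde G}(t)$ gives
\begin{align*}
\bigl(\mathcal {\tilde G}(t)\bigr)^{-1}\mathcal W^-(t)
= \bigl(\mathcal {\tilde G}(t)\bigr)^{-1}E^-(q\xi t)\,\mathcal {\tilde G}(t)
= E^-(q^{-1}\xi t),
\end{align*}
by the boxed commutation relation above. On the other hand, the right-hand side of the claimed \eqref{eq:gi1} can be evaluated by applying the right factorization of \eqref{eq:long1} at the rescaled argument $q^{-2}t$: namely $\mathcal W^-(q^{-2}t)=\mathcal {\tilde G}(q^{-2}t)\,E^-(q^{-1}\xi q^{-2}t)$, so
\begin{align*}
\mathcal W^-(q^{-2}t)\,\bigl(\mathcal {\tilde G}(q^{-2}t)\bigr)^{-1}
= \mathcal {\tilde G}(q^{-2}t)\,E^-(q^{-3}\xi t)\,\bigl(\mathcal {\tilde G}(q^{-2}t)\bigr)^{-1}.
\end{align*}
The delicate point is to reconcile these two evaluations: I must check that conjugating $E^-(q^{-3}\xi t)$ by $\mathcal {\tilde G}(q^{-2}t)$ reproduces $E^-(q^{-1}\xi t)$. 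This follows by reusing the commutation relation from \eqref{eq:long1}, but now applied at argument $q^{-2}t$ so that the $q^{\pm 1}$ shifts in the argument line up correctly; tracking these powers of $q$ is the one place where care is needed.

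For \eqref{eq:gi2} I would argue identically, using the two factorizations of $\mathcal W^+(t)$ in \eqref{eq:long2} in place of those for $\mathcal W^-(t)$; the only change is that the conjugation now shifts the argument by $q^{+2}$ rather than $q^{-2}$, matching the sign in the statement. Alternatively, and perhaps more cleanly, \eqref{eq:gi2} can be obtained from \eqref{eq:gi1} by applying the automorphism $\sigma$ from Lemma \ref{lem:autAc}, which interchanges $\mathcal W^-(t)\leftrightarrow\mathcal W^+(t)$ and fixes $\mathcal {\tilde G}(t)\leftrightarrow\mathcal G(t)$ at the level of generating functions; I would need to verify that $\sigma$ sends the $q^{-2}$-shift to the $q^{2}$-shift, which is consistent with $\sigma$ inverting the roles of the two generators. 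The main obstacle throughout is purely bookkeeping: ensuring the powers of $q$ in the rescaled arguments of $E^-$, $E^+$, and $\mathcal {\tilde G}$ are consistent after the conjugations, and verifying that $\mathcal {\tilde G}(t)$ and $\mathcal {\tilde G}(q^{\pm2}t)$ interact correctly under the factorizations of Proposition \ref{lem:zzznote}.
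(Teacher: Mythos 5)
Your main argument is correct and is essentially the paper's proof, which simply says to compare the two equations in \eqref{eq:long1} (resp.\ \eqref{eq:long2}). Your route for the right-hand side is more roundabout than necessary: instead of invoking the right factorization $\mathcal W^-(q^{-2}t)=\mathcal {\tilde G}(q^{-2}t)E^-(q^{-3}\xi t)$ and then conjugating, you can apply the \emph{left} factorization at the rescaled argument, $\mathcal W^-(q^{-2}t)=E^-(q\xi q^{-2}t)\mathcal {\tilde G}(q^{-2}t)=E^-(q^{-1}\xi t)\mathcal {\tilde G}(q^{-2}t)$, which gives $\mathcal W^-(q^{-2}t)\bigl(\mathcal {\tilde G}(q^{-2}t)\bigr)^{-1}=E^-(q^{-1}\xi t)$ immediately and matches your evaluation of the left-hand side with no further conjugation. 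That said, your conjugation step does check out, so the detour is harmless. One genuine caution: the proposed alternative derivation of \eqref{eq:gi2} via $\sigma$ does not work as stated. By Lemma \ref{lem:autAc}, $\sigma$ \emph{swaps} $\mathcal G_{k+1}\leftrightarrow\mathcal {\tilde G}_{k+1}$ rather than fixing them, so applying $\sigma$ to \eqref{eq:gi1} yields $\bigl(\mathcal G(t)\bigr)^{-1}\mathcal W^+(t)=\mathcal W^+(q^{-2}t)\bigl(\mathcal G(q^{-2}t)\bigr)^{-1}$, a statement about $\mathcal G$ with a $q^{-2}$ shift, not \eqref{eq:gi2}. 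Stick with the direct argument from \eqref{eq:long2}.
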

\begin{proof} To get \eqref{eq:gi1}, compare the two equations in \eqref{eq:long1}.
To get \eqref{eq:gi2}, compare the two equations in \eqref{eq:long2}.
\end{proof}

\section{A factorization of $\mathcal Z^\vee(t)$}


\noindent Recall the generating function $\mathcal Z^\vee(t)$ from  
Definition \ref{def:Zn} and Lemma  \ref{lem:zV1}.
In this section we obtain a factorization of $\mathcal Z^\vee(t)$.
\medskip

\noindent
 Throughout this section we identify $U^+_q$ with
$\langle \mathcal W_0, \mathcal W_1\rangle $ via the map $\imath$ from Lemma
\ref{lem:iota}.

\begin{proposition}  \label{thm:Mn} 
For the algebra $\mathcal U^+_q$ we have
\begin{align}
\mathcal Z^\vee(t)  =-(q-q^{-1}) \mathcal {\tilde G}(q^{-1}t) E(\xi t) \mathcal {\tilde G}(qt),
\label{eq:mainRes}
\end{align}
\noindent where we recall $
\xi = -q^2 (q-q^{-1})^{-2}$.
\end{proposition}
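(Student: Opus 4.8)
The plan is to establish the factorization \eqref{eq:mainRes} by starting from the defining expression for $\mathcal Z^\vee(t)$ given in Lemma~\ref{lem:zV1}, namely
\begin{align*}
\mathcal Z^\vee(t) = \mathcal G(q^{-1}t) \mathcal{\tilde G}(qt) - qt\, \mathcal W^-(q^{-1}t)\mathcal W^+(qt),
\end{align*}
and then eliminating $\mathcal G$, $\mathcal W^-$, $\mathcal W^+$ in favor of $\mathcal{\tilde G}$ and the $U^+_q$-generating functions $E^\pm(t)$, $E(t)$ using the comparison results from Section~10. First I would substitute the appropriate arguments into Propositions~\ref{lem:zzznote} and~\ref{lem:ex2}. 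Evaluating $\mathcal W^-(q^{-1}t)$ via the left equation in \eqref{eq:long1} and $\mathcal W^+(qt)$ via the right equation in \eqref{eq:long2} gives
\begin{align*}
\mathcal W^-(q^{-1}t) = E^-(\xi t)\,\mathcal{\tilde G}(q^{-1}t), \qquad \mathcal W^+(qt) = \mathcal{\tilde G}(qt)\,E^+(q^2\xi t).
\end{align*}
For the $\mathcal G$ term I would use \eqref{eq:GF1} with argument $q^{-1}t$, which expresses $\mathcal G(q^{-1}t)$ as a bracketed expression in $E^-(\xi t)$, $E^+(q^{-2}\xi t)$, $E(\xi t)$ times $\mathcal{\tilde G}(q^{-1}t)$ on the right.

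The key obstacle I anticipate is getting the arguments and intervening factors of $\mathcal{\tilde G}$ to line up so that everything collapses. The product $\mathcal G(q^{-1}t)\mathcal{\tilde G}(qt)$ will produce, after substitution, a term $\mathcal{\tilde G}(q^{-1}t)\mathcal{\tilde G}(qt)$ (these commute by \eqref{eq:3pp10}) multiplied by the bracket from \eqref{eq:GF1}, while the product $qt\,\mathcal W^-(q^{-1}t)\mathcal W^+(qt)$ will give $qt\,E^-(\xi t)\mathcal{\tilde G}(q^{-1}t)\mathcal{\tilde G}(qt)E^+(q^2\xi t)$. The strategy is to pull $\mathcal{\tilde G}(q^{-1}t)$ to the left and $\mathcal{\tilde G}(qt)$ to the right throughout, so that the desired right-hand side $-(q-q^{-1})\mathcal{\tilde G}(q^{-1}t)E(\xi t)\mathcal{\tilde G}(qt)$ emerges with the central factor being purely $E(\xi t)$. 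To move the stray $E^+(q^2\xi t)$ and $E^-(\xi t)$ factors across the $\mathcal{\tilde G}$'s, I would use Lemma~\ref{lem:gi}, which gives exactly the commutation of $\mathcal{\tilde G}(t)^{-1}$ with $\mathcal W^\pm$ and hence (after translating through Proposition~\ref{lem:zzznote}) the rule for moving $E^\pm$ past $\mathcal{\tilde G}$ with the argument rescaled by $q^{\pm 2}$.

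Once all terms are brought to the common form $\mathcal{\tilde G}(q^{-1}t)\,(\,\cdots)\,\mathcal{\tilde G}(qt)$, the problem reduces to a purely $U^+_q$ identity among the central factors: I must check that the resulting combination of $E^-(\xi t)E^+(q^{-2}\xi t)$-type products and $E(\xi t)$-type terms simplifies to $-(q-q^{-1})E(\xi t)$. Here the crucial input is Proposition~\ref{prop:New}, which relates the $q$-commutator $\lbrack E^-(s),E^+(t)\rbrack_q$ to $E(s)-E(t)$; with the arguments produced above the difference quotient should degenerate (the two $E$ arguments coinciding or differing by a controlled factor), so that the quadratic $E^-E^+$ contributions from the $\mathcal G$ bracket and from the $\mathcal W^-\mathcal W^+$ term cancel, leaving only a multiple of $E(\xi t)$. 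I expect the main difficulty to be bookkeeping: confirming that the numerical factors $q^2t$, $qt$, $\xi$, and the shifts $q^{\pm 1}$, $q^{\pm 2}$ in the arguments conspire so that the $E^\pm$ products cancel exactly and the surviving coefficient of $E(\xi t)$ is precisely $-(q-q^{-1})$. As a consistency check on the final constant, one can compare the $t^0$ coefficient on both sides: the left side gives $\mathcal Z^\vee_0=1$, while the right side gives $-(q-q^{-1})E(0)=-(q-q^{-1})\cdot(-(q-q^{-1})^{-1})=1$ by \eqref{eq:zv}, confirming the normalization.
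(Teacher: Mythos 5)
Your plan is the paper's proof: start from Lemma \ref{lem:zV1}, substitute the Section~10 comparison formulas for $\mathcal G(q^{-1}t)$, $\mathcal W^-(q^{-1}t)$, $\mathcal W^+(qt)$, and watch the quadratic $E^-E^+$ terms cancel. Two remarks on your specific choices. First, you do not need Proposition \ref{prop:New} at all: with the correct substitutions the $E^-E^+$ contribution coming from the $\mathcal G$-bracket is \emph{identical} to the one coming from $qt\,\mathcal W^-(q^{-1}t)\mathcal W^+(qt)$, so the cancellation is exact rather than resting on a difference-quotient identity. Second, and more substantively, your choice of \eqref{eq:GF1} (with $\mathcal {\tilde G}(q^{-1}t)$ on the right) together with the forms $\mathcal W^-(q^{-1}t)=E^-(\xi t)\mathcal {\tilde G}(q^{-1}t)$ and $\mathcal W^+(qt)=\mathcal {\tilde G}(qt)E^+(q^2\xi t)$ leads, after legitimately sliding the $E^\pm$ factors across the $\mathcal {\tilde G}$'s via \eqref{eq:long1}, \eqref{eq:long2}, to
\begin{align*}
\mathcal Z^\vee(t)=-(q-q^{-1})\,E(\xi t)\,\mathcal {\tilde G}(q^{-1}t)\,\mathcal {\tilde G}(qt),
\end{align*}
which is a permutation of \eqref{eq:mainRes} but not literally it. Your stated plan to ``pull $\mathcal {\tilde G}(q^{-1}t)$ to the left'' so that $E(\xi t)$ sits in the middle requires knowing that $E(\xi t)$ commutes with $\mathcal {\tilde G}(q^{-1}t)$ --- but in the paper that commutativity (Corollary \ref{cor:BG}) is \emph{deduced from} Proposition \ref{thm:Mn}, so invoking it here would be circular. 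The fix is simply to use \eqref{eq:GF3} in place of \eqref{eq:GF1} and the companion forms $\mathcal W^-(q^{-1}t)=\mathcal {\tilde G}(q^{-1}t)E^-(q^{-2}\xi t)$, $\mathcal W^+(qt)=E^+(\xi t)\mathcal {\tilde G}(qt)$, which is exactly what the paper does; then $\mathcal {\tilde G}(q^{-1}t)$ is already on the far left and $\mathcal {\tilde G}(qt)$ on the far right, and \eqref{eq:mainRes} drops out with no further commutation needed. Your $t^0$ consistency check is correct.
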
 
\begin{proof} Consider the terms on the right in \eqref{lem:zs}.
By \eqref{eq:GF3},
\begin{align}
 \mathcal G(q^{-1}t) = 
 \mathcal {\tilde G}(q^{-1} t)
 \Bigl(qt E^-(q^{-2} \xi t)E^+(\xi t) - (q-q^{-1}) E(\xi t)\Bigr).
 \label{eq:s1}
 \end{align}
By Proposition  \ref{lem:zzznote},
\begin{align}
\mathcal W^-(q^{-1}t) =
\mathcal {\tilde G}(q^{-1}t)
 E^-(q^{-2}\xi t), \qquad \qquad
\mathcal W^+(qt)   = E^+(\xi t) \mathcal {\tilde G}(qt).
\label{eq:s2}
\end{align}
Evaluating the right-hand side of \eqref{lem:zs} using \eqref{eq:s1}, \eqref{eq:s2}  we routinely obtain \eqref{eq:mainRes}.
\end{proof}

\noindent Next, we give some consequences of Proposition \ref{thm:Mn}.

\begin{definition}\rm For notational convenience, define
\begin{align}
E^\vee(t) = -(q-q^{-1})E(t).
\label{eq:nc}
\end{align}
\end{definition}

\begin{corollary} \label{cor:BZ} For the algebra $\mathcal U^+_q$ we have
\begin{align}
\label{eq:BZ}
E^\vee (t) = 
\bigl(\mathcal {\tilde G}(q^{-1}\xi^{-1} t)\bigr)^{-1} \mathcal Z^\vee(\xi^{-1}t)
 \bigl(\mathcal {\tilde G}(q\xi^{-1}t)\bigr)^{-1}.
 \end{align}
 \end{corollary}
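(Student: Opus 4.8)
The plan is to obtain \eqref{eq:BZ} directly from the factorization in Proposition \ref{thm:Mn} by a substitution in the indeterminate followed by algebraic rearrangement. Starting from \eqref{eq:mainRes}, namely
\begin{align*}
\mathcal Z^\vee(t) = -(q-q^{-1}) \mathcal {\tilde G}(q^{-1}t) E(\xi t) \mathcal {\tilde G}(qt),
\end{align*}
I would replace $t$ by $\xi^{-1}t$ throughout. This turns the argument $\xi t$ of $E$ into $t$, so the middle factor becomes $E(t)$, while the two outer factors become $\mathcal {\tilde G}(q^{-1}\xi^{-1}t)$ and $\mathcal {\tilde G}(q\xi^{-1}t)$. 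Recalling the definition $E^\vee(t) = -(q-q^{-1})E(t)$ from \eqref{eq:nc}, the right-hand side is then precisely $\mathcal {\tilde G}(q^{-1}\xi^{-1}t)\, E^\vee(t)\, \mathcal {\tilde G}(q\xi^{-1}t)$.

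The second step is to invert the two outer generating-function factors. Since $\mathcal {\tilde G}_0 = 1$, both $\mathcal {\tilde G}(q^{-1}\xi^{-1}t)$ and $\mathcal {\tilde G}(q\xi^{-1}t)$ are invertible by Lemma \ref{lem:inv} (their constant terms equal $1$). I would left-multiply by $\bigl(\mathcal {\tilde G}(q^{-1}\xi^{-1}t)\bigr)^{-1}$ and right-multiply by $\bigl(\mathcal {\tilde G}(q\xi^{-1}t)\bigr)^{-1}$ on the substituted identity $\mathcal Z^\vee(\xi^{-1}t) = \mathcal {\tilde G}(q^{-1}\xi^{-1}t)\, E^\vee(t)\, \mathcal {\tilde G}(q\xi^{-1}t)$, which isolates $E^\vee(t)$ on one side and yields \eqref{eq:BZ} exactly as stated.

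I expect no serious obstacle here, as the corollary is essentially a formal rewriting of Proposition \ref{thm:Mn}. The only points requiring care are bookkeeping ones: confirming that the scalar substitution $t \mapsto \xi^{-1}t$ commutes through all the generating functions (which it does, since $\xi$ is a nonzero scalar and $t$ is a central indeterminate), and noting that the inverses exist and are unique by Lemma \ref{lem:inv}. One should also verify that the ordering of the outer inverse factors matches—the inverse of the left factor stays on the left and the inverse of the right factor stays on the right—which is immediate from the associativity of the generating-function algebra. Thus the proof is a short two-line manipulation: substitute $t\to \xi^{-1}t$ in \eqref{eq:mainRes}, then cancel the outer factors using their invertibility.
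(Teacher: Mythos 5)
Your proposal is correct and follows the same route as the paper, whose proof is simply ``rearrange the terms in \eqref{eq:mainRes}'': the substitution $t\mapsto\xi^{-1}t$, the use of \eqref{eq:nc}, and the cancellation of the invertible outer factors $\mathcal{\tilde G}(q^{\pm 1}\xi^{-1}t)$ via Lemma \ref{lem:inv} are exactly the intended manipulations. No issues.
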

 \begin{proof} Rearrange the terms in \eqref{eq:mainRes}.
 \end{proof}
 \begin{corollary} \label{cor:BG}
 For the algebra $\mathcal U^+_q$,
 \begin{align*}
 \lbrack \mathcal {\tilde G}(s), E^\vee(t) \rbrack = 0.
 \end{align*}
 \end{corollary}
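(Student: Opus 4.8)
The plan is to read the result straight off the factorization of $E^\vee(t)$ provided by Corollary \ref{cor:BZ}. That corollary gives
\begin{align*}
E^\vee(t) = \bigl(\mathcal {\tilde G}(q^{-1}\xi^{-1} t)\bigr)^{-1} \mathcal Z^\vee(\xi^{-1}t) \bigl(\mathcal {\tilde G}(q\xi^{-1}t)\bigr)^{-1},
\end{align*}
so it suffices to show that $\mathcal {\tilde G}(s)$ commutes with each of the three factors on the right-hand side, whence it commutes with their product.

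First I would treat the middle factor. Each coefficient $\mathcal Z^\vee_n$ lies in the center $\mathcal Z$ by Lemma \ref{lem:Zfix}, so $\mathcal Z^\vee(\xi^{-1}t)$ commutes with every element of $\mathcal U^+_q$; in particular it commutes with every coefficient $\mathcal {\tilde G}_m$ of $\mathcal {\tilde G}(s)$, giving $\lbrack \mathcal {\tilde G}(s), \mathcal Z^\vee(\xi^{-1}t)\rbrack = 0$.

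Next I would treat the two outer factors. By \eqref{eq:3pp10} we have $\lbrack \mathcal {\tilde G}(s), \mathcal {\tilde G}(t)\rbrack = 0$, which is equivalent to the statement that the coefficients $\lbrace \mathcal {\tilde G}_n\rbrace_{n\in \mathbb N}$ mutually commute. Consequently $\mathcal {\tilde G}(s)$ commutes with $\mathcal {\tilde G}(u)$ for any rescaling $u$ of the indeterminate, in particular with $\mathcal {\tilde G}(q^{-1}\xi^{-1}t)$ and $\mathcal {\tilde G}(q\xi^{-1}t)$. These two generating functions are invertible by Lemma \ref{lem:inv}, since $\mathcal {\tilde G}_0 = 1$. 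The one point worth a word is that commutation passes to inverses: if $ab=ba$ with $b$ invertible, then $b^{-1}a = b^{-1}(ab)b^{-1} = b^{-1}(ba)b^{-1} = ab^{-1}$. Applying this shows that $\mathcal {\tilde G}(s)$ also commutes with $\bigl(\mathcal {\tilde G}(q^{-1}\xi^{-1}t)\bigr)^{-1}$ and $\bigl(\mathcal {\tilde G}(q\xi^{-1}t)\bigr)^{-1}$.

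Combining the three commutation facts, $\mathcal {\tilde G}(s)$ commutes with the full product defining $E^\vee(t)$, which is exactly the claim $\lbrack \mathcal {\tilde G}(s), E^\vee(t)\rbrack = 0$. I do not anticipate any genuine obstacle here: once Corollary \ref{cor:BZ} is in hand, the statement is a formal consequence of the centrality of $\mathcal Z^\vee$ and the mutual commutativity of the $\mathcal {\tilde G}$'s, with the only subtlety being the elementary passage from commuting with an invertible element to commuting with its inverse, handled above.
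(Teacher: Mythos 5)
Your proof is correct and follows exactly the paper's argument: the paper's proof of Corollary \ref{cor:BG} is the one-line observation that $\mathcal{\tilde G}(s)$ commutes with each factor on the right-hand side of \eqref{eq:BZ}, which is precisely what you establish. Your write-up merely makes explicit the three ingredients the paper leaves implicit (centrality of $\mathcal Z^\vee$, the relation \eqref{eq:3pp10}, and the passage from commuting with an element to commuting with its inverse).
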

 \begin{proof} The generating function $\mathcal {\tilde G}(s)$ commutes with each factor on the right in \eqref{eq:BZ}.
 \end{proof}
 \begin{corollary} \label{cor:BGc}
 For the algebra $\mathcal U^+_q$,
 \begin{align*}
 \lbrack \mathcal {\tilde G}_{k+1},  E_{n \delta} \rbrack = 0 \qquad \qquad k,n \in \mathbb N.
 \end{align*}
 \end{corollary}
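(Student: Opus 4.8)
The plan is to derive this coefficientwise statement directly from the generating-function identity in Corollary \ref{cor:BG}. That corollary asserts $\lbrack \mathcal {\tilde G}(s), E^\vee(t) \rbrack = 0$, and the present claim is simply the extraction of individual coefficients from this equation, together with an unwinding of the shorthand $E^\vee(t)$.

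First I would recall the definitions of the two generating functions involved. By Definition \ref{def:gf4} we have $\mathcal {\tilde G}(s) = \sum_{k \in \mathbb N} \mathcal {\tilde G}_k s^k$, and by the definition of $E^\vee(t)$ in \eqref{eq:nc} together with Definition \ref{def:Bgen} we have $E^\vee(t) = -(q-q^{-1}) \sum_{n \in \mathbb N} E_{n\delta} t^n$. The commutator $\lbrack \mathcal {\tilde G}(s), E^\vee(t) \rbrack$ is therefore the generating function in the two commuting indeterminates $s,t$ whose coefficient of $s^k t^n$ equals $-(q-q^{-1}) \lbrack \mathcal {\tilde G}_k, E_{n\delta}\rbrack$.

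Next I would invoke Corollary \ref{cor:BG} to conclude that this entire double generating function vanishes, hence every one of its coefficients vanishes. Since $q$ is not a root of unity and in particular $q-q^{-1} \neq 0$, it follows that $\lbrack \mathcal {\tilde G}_k, E_{n\delta}\rbrack = 0$ for all $k, n \in \mathbb N$. Reindexing $k \mapsto k+1$ (and noting the $k=0$ case is trivial since $\mathcal {\tilde G}_0 = 1$ is central by \eqref{eq:GG0}) gives exactly the asserted relation $\lbrack \mathcal {\tilde G}_{k+1}, E_{n\delta}\rbrack = 0$ for $k, n \in \mathbb N$.

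The argument is entirely routine, so there is no real obstacle; the only point requiring a word of care is the bookkeeping in passing from the generating-function identity to the coefficientwise statement, namely that equality of two generating functions over $\mathcal U^+_q$ is equivalent to equality of all their coefficients, and that the scalar $-(q-q^{-1})$ may be cancelled since it is nonzero.
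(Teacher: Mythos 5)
Your proposal is correct and follows exactly the paper's route: the paper's own proof is simply ``By Corollary \ref{cor:BG}'', i.e.\ extracting coefficients from the identity $\lbrack \mathcal {\tilde G}(s), E^\vee(t)\rbrack = 0$ and cancelling the nonzero scalar $-(q-q^{-1})$. Your write-up just makes the routine coefficient bookkeeping explicit.
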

 \begin{proof} By Corollary 
 \ref{cor:BG}.
 \end{proof}
 \begin{corollary} \label{cor:perm} The generating function
  $\mathcal Z^\vee (t)$
 \noindent is equal to each of
 \begin{align*}
 \mathcal {\tilde G}(q^{-1}t) E^\vee(\xi t) \mathcal {\tilde G}(qt),
 \qquad \quad
  E^\vee(\xi t) \mathcal {\tilde G}(q^{-1}t) \mathcal {\tilde G}(qt),
 \qquad \quad
  \mathcal {\tilde G}(q^{-1}t) \mathcal {\tilde G}(qt) E^\vee(\xi t),
 \\
  \mathcal {\tilde G}(qt) E^\vee(\xi t) \mathcal {\tilde G}(q^{-1}t),
 \qquad \quad
  E^\vee(\xi t) \mathcal {\tilde G}(qt) \mathcal {\tilde G}(q^{-1}t),
 \qquad \quad
\mathcal {\tilde G}(qt) \mathcal {\tilde G}(q^{-1}t) E^\vee(\xi t).
 \end{align*}
 \end{corollary}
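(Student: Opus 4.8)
The plan is to reduce the entire statement to the single observation that the three generating functions $\mathcal {\tilde G}(q^{-1}t)$, $E^\vee(\xi t)$, $\mathcal {\tilde G}(qt)$ pairwise commute. First I would combine Proposition \ref{thm:Mn} with the definition \eqref{eq:nc} of $E^\vee(t)$. Since $E^\vee(\xi t) = -(q-q^{-1})E(\xi t)$, substituting into \eqref{eq:mainRes} immediately rewrites the factorization as
\[
\mathcal Z^\vee(t) = \mathcal {\tilde G}(q^{-1}t)\, E^\vee(\xi t)\, \mathcal {\tilde G}(qt),
\]
which is precisely the first of the six listed expressions. So the content of the corollary is that the three factors on the right may be freely permuted.

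Next I would verify the three required commutations. The two $\mathcal {\tilde G}$-factors commute with each other by the second relation in \eqref{eq:3pp10}, which asserts $\lbrack \mathcal {\tilde G}(s), \mathcal {\tilde G}(t)\rbrack = 0$ for independent $s,t$; specializing $s \mapsto q^{-1}t$ and the second argument to $qt$ gives $\lbrack \mathcal {\tilde G}(q^{-1}t), \mathcal {\tilde G}(qt)\rbrack = 0$. Each $\mathcal {\tilde G}$-factor commutes with $E^\vee(\xi t)$ by Corollary \ref{cor:BG}, which gives $\lbrack \mathcal {\tilde G}(s), E^\vee(t)\rbrack = 0$ for independent $s,t$; specializing $s \mapsto q^{-1}t$ (resp. $s \mapsto qt$) and the $E^\vee$-argument to $\xi t$ yields the two remaining commutations.

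With all three factors pairwise commuting, the product is independent of the order of multiplication, so each of the $3! = 6$ orderings of $\mathcal {\tilde G}(q^{-1}t)$, $E^\vee(\xi t)$, $\mathcal {\tilde G}(qt)$ equals $\mathcal Z^\vee(t)$; these six orderings are exactly the six expressions in the statement, and the argument is complete. There is no genuine obstacle here, as the result is purely a bookkeeping consequence of commutativity. The only point meriting care is to confirm that the commutations in \eqref{eq:3pp10} and Corollary \ref{cor:BG} hold coefficientwise for independent indeterminates, so that the scalar substitutions $s \mapsto q^{\pm 1}t$ against the argument $\xi t$ are legitimate rather than requiring the two indeterminates to be equated.
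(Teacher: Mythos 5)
Your proposal is correct and follows essentially the same route as the paper: rewrite the factorization \eqref{eq:mainRes} using \eqref{eq:nc}, then permute the three factors using the commutativity of $\mathcal {\tilde G}$ with itself and with $E^\vee$. The paper cites the coefficientwise statements (Corollary \ref{cor:BGc} and the right-hand equation of \eqref{eq:3p10}) precisely to handle the point you flag at the end about specializing independent indeterminates, so your care there matches the paper's reasoning exactly.
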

 \begin{proof} Evaluate \eqref{eq:mainRes}
 using \eqref{eq:nc} along with
  Corollary \ref{cor:BGc} and the equation on the right in \eqref{eq:3p10}.
 \end{proof}

 \section{Expressing $E^\pm(t)$, $E(t)$ in terms of $\mathcal W^\pm (t)$, $\mathcal G(t)$, $\mathcal {\tilde G}(t)$}
 
 \noindent In this section, we continue to discuss the generating functions $E^\pm(t)$, $E(t)$ for $U^+_q$ and $\mathcal W^\pm (t)$, $\mathcal G(t)$, $\mathcal {\tilde G}(t)$ for $\mathcal U^+_q$.
 We first express $E^\pm(t)$, $E(t)$ in terms of $\mathcal W^\pm (t)$, $\mathcal G(t)$, $\mathcal {\tilde G}(t)$. We then use these
 expressions to recover the results about  $E^\pm(t)$, $E(t)$ from Section 5. 
 \medskip
 
 \noindent Throughout this section we identify $U^+_q$ with
$\langle \mathcal W_0, \mathcal W_1\rangle $ via the map $\imath$ from Lemma
\ref{lem:iota}.
\medskip
 
\noindent To simplify our calculations, we use the following change of variables involving $\mathcal G(t)$, $\mathcal Z^\vee(t)$.
    \begin{lemma} \label{lem:sw}
 For the algebra $\mathcal U^+_q$,
 \begin{align*}
 \mathcal G(t) &= \mathcal Z^\vee(qt) \bigl( \mathcal {\tilde G}(q^2 t) \bigr)^{-1} 
 + q^2 t \mathcal W^-(t)  \mathcal W^+(q^2t)
 \bigl( \mathcal {\tilde G}(q^2t) \bigr)^{-1}.
\end{align*}
\end{lemma}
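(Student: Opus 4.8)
The plan is to start from the factorization of $\mathcal Z^\vee(t)$ in Proposition \ref{thm:Mn} and rearrange it so that $\mathcal G(t)$ appears alone on the left. The target identity expresses $\mathcal G(t)$ as a sum of two terms, each having a factor of $\bigl(\mathcal{\tilde G}(q^2 t)\bigr)^{-1}$ on the right. This strongly suggests that I should take one of the representations of $\mathcal G(t)$ from Proposition \ref{lem:ex2}, eliminate the $E$-generating functions in favor of $\mathcal Z^\vee$ and the $\mathcal W^\pm$ generating functions, and then reconcile the various argument-shifts (factors of $q^{\pm 1}$, $\xi$, and $\xi^{-1}$) so that everything is evaluated at the right points.

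First I would recall equation \eqref{eq:GF3}, which states
\begin{align*}
\mathcal G(t) = \mathcal {\tilde G}(t) \Bigl(q^2 t\, E^-(q^{-1} \xi t) E^+(q\xi t) -(q-q^{-1}) E (q \xi t) \Bigr).
\end{align*}
The second term inside the parentheses involves $E(q\xi t)$, and by Corollary \ref{cor:BZ} (the definition $E^\vee(t) = -(q-q^{-1})E(t)$ together with the factorization \eqref{eq:BZ}) I can rewrite $-(q-q^{-1})E(q\xi t) = E^\vee(q\xi t)$ in terms of $\mathcal Z^\vee$, conjugated by inverse $\mathcal{\tilde G}$ factors. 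Setting $t \mapsto q\xi t$ in \eqref{eq:BZ} gives $E^\vee(q\xi t) = \bigl(\mathcal{\tilde G}(t)\bigr)^{-1}\mathcal Z^\vee(qt)\bigl(\mathcal{\tilde G}(q^2 t)\bigr)^{-1}$, since $q^{-1}\xi^{-1}\cdot q\xi = 1$ and $q\xi^{-1}\cdot q\xi = q^2$. Multiplying on the left by $\mathcal{\tilde G}(t)$ then produces exactly the first term $\mathcal Z^\vee(qt)\bigl(\mathcal{\tilde G}(q^2 t)\bigr)^{-1}$ of the target.

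For the remaining term, I would handle $\mathcal{\tilde G}(t)\, q^2 t\, E^-(q^{-1}\xi t) E^+(q\xi t)$ using Proposition \ref{lem:zzznote}. The equation on the right in \eqref{eq:long1} reads $\mathcal W^-(t) = \mathcal{\tilde G}(t) E^-(q^{-1}\xi t)$, which absorbs $\mathcal{\tilde G}(t) E^-(q^{-1}\xi t)$ into $\mathcal W^-(t)$. For the surviving $E^+(q\xi t)$, the equation on the left in \eqref{eq:long2} gives $\mathcal W^+(\tau) = E^+(q^{-1}\xi \tau)\,\mathcal{\tilde G}(\tau)$; choosing $\tau = q^2 t$ yields $E^+(q\xi t) = \mathcal W^+(q^2 t)\bigl(\mathcal{\tilde G}(q^2 t)\bigr)^{-1}$, since $q^{-1}\xi\cdot q^2 t = q\xi t$. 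Assembling these, the second term becomes $q^2 t\, \mathcal W^-(t)\, \mathcal W^+(q^2 t)\bigl(\mathcal{\tilde G}(q^2 t)\bigr)^{-1}$, matching the target exactly.

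The main obstacle I anticipate is purely bookkeeping: keeping the numerous argument-shifts consistent across the three sources (\eqref{eq:GF3}, \eqref{eq:BZ}, and Proposition \ref{lem:zzznote}), especially verifying that the $\mathcal{\tilde G}(q^2 t)^{-1}$ factors land in the correct position and that no stray $\mathcal{\tilde G}$ factor is left uncancelled. The only conceptual subtlety is the order of the factors: since $\mathcal{\tilde G}(s)$ and $\mathcal{\tilde G}(t)$ commute by \eqref{eq:3p10} and $\mathcal{\tilde G}$ commutes with $E^\vee$ by Corollary \ref{cor:BG}, the $\mathcal{\tilde G}(t)$ I multiply on the left can be moved past $\mathcal Z^\vee$-related factors freely, which is what makes the clean cancellation possible. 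I would present the argument as a short chain of substitutions rather than grinding through coefficients, citing \eqref{eq:GF3}, \eqref{eq:BZ}, and \eqref{eq:long1}, \eqref{eq:long2} at each step.
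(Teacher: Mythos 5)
Your argument is correct and non-circular (everything you cite --- \eqref{eq:GF3}, \eqref{eq:BZ}, \eqref{eq:long1}, \eqref{eq:long2} --- precedes Lemma \ref{lem:sw} and does not depend on it), and the argument-shifts all check out: $E^\vee(q\xi t)=\bigl(\mathcal{\tilde G}(t)\bigr)^{-1}\mathcal Z^\vee(qt)\bigl(\mathcal{\tilde G}(q^2t)\bigr)^{-1}$ and $E^+(q\xi t)=\mathcal W^+(q^2t)\bigl(\mathcal{\tilde G}(q^2t)\bigr)^{-1}$ are exactly right. However, the route is very different from the paper's and far heavier than necessary. The paper's entire proof is: replace $t$ by $qt$ in \eqref{lem:zs}, giving $\mathcal Z^\vee(qt)=\mathcal G(t)\mathcal{\tilde G}(q^2t)-q^2t\,\mathcal W^-(t)\mathcal W^+(q^2t)$, and multiply on the right by $\bigl(\mathcal{\tilde G}(q^2t)\bigr)^{-1}$; the lemma is a one-line rearrangement of the definition of $\mathcal Z^\vee(t)$ and involves no identification with $U^+_q$ at all. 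What you have done is effectively run the proof of Proposition \ref{thm:Mn} backwards: that proposition was obtained by substituting \eqref{eq:GF3} and Proposition \ref{lem:zzznote} into \eqref{lem:zs}, and you substitute the factorization back into \eqref{eq:GF3} to recover the rearranged \eqref{lem:zs}. Your version buys nothing extra and obscures the fact that the lemma is purely a statement about the alternating generating functions (it holds in $\mathcal U^+_q$ with no reference to $E^\pm(t)$, $E(t)$); in a revision you should just solve \eqref{lem:zs} directly.
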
 
\begin{proof} Solve \eqref{lem:zs} for
$\mathcal G(t)$.
\end{proof} 
 
\begin{theorem} \label{prop:back1}
For the algebra $\mathcal U^+_q$,
\begin{align}
E^-(t) &= \mathcal W^-(q^{-1} \xi^{-1} t) \bigl( \mathcal {\tilde G}(q^{-1} \xi^{-1} t)\bigr)^{-1}, \label{eq:em}
 \\
E^+(t) &=    \mathcal W^+(q \xi^{-1} t) \bigl( \mathcal {\tilde G}(q \xi^{-1} t)\bigr)^{-1}, \label{eq:ep}
\\
E(t) &= -\,\frac{\mathcal Z^\vee(\xi^{-1}t) \bigl( \mathcal {\tilde G}(q^{-1} \xi^{-1} t)\bigr)^{-1} \bigl( \mathcal {\tilde G}(q \xi^{-1} t)\bigr)^{-1}}{q-q^{-1}}.
\label{eq:e}
 \end{align}
 \end{theorem}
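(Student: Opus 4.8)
The plan is to derive the three formulas of Theorem \ref{prop:back1} directly from the results already established, treating \eqref{eq:em}, \eqref{eq:ep} as essentially immediate and reserving the real work for \eqref{eq:e}. For \eqref{eq:em}, recall the equation on the right in \eqref{eq:long1}, namely $\mathcal W^-(t) = \mathcal {\tilde G}(t) E^-(q^{-1}\xi t)$. Since $\mathcal {\tilde G}(t)$ is invertible by Lemma \ref{lem:inv}, I would multiply on the left by $\bigl(\mathcal {\tilde G}(t)\bigr)^{-1}$ to obtain $E^-(q^{-1}\xi t) = \bigl(\mathcal {\tilde G}(t)\bigr)^{-1} \mathcal W^-(t)$, then apply the commutation identity \eqref{eq:gi1} to rewrite the right-hand side as $\mathcal W^-(q^{-2}t)\bigl(\mathcal {\tilde G}(q^{-2}t)\bigr)^{-1}$. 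Substituting $t \mapsto q \xi^{-1} t$ so that $q^{-1}\xi t \mapsto t$ yields \eqref{eq:em}. The derivation of \eqref{eq:ep} is identical in structure, starting from the right-hand equation in \eqref{eq:long2} and using \eqref{eq:gi2}; alternatively one can apply the antiautomorphism $\tau$ to \eqref{eq:em} and invoke Lemma \ref{lem:asym2} together with the $\tau$-action on the alternating generators from Definition \ref{def:tauAc}.

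For \eqref{eq:e} the natural route is through the factorization of $\mathcal Z^\vee(t)$ already packaged in Corollary \ref{cor:BZ}. That corollary reads $E^\vee(t) = \bigl(\mathcal {\tilde G}(q^{-1}\xi^{-1}t)\bigr)^{-1} \mathcal Z^\vee(\xi^{-1}t)\bigl(\mathcal {\tilde G}(q\xi^{-1}t)\bigr)^{-1}$, where $E^\vee(t) = -(q-q^{-1})E(t)$ by \eqref{eq:nc}. So I would begin by substituting the definition of $E^\vee(t)$ and dividing by $-(q-q^{-1})$ to get $E(t) = -\bigl(\mathcal {\tilde G}(q^{-1}\xi^{-1}t)\bigr)^{-1}\mathcal Z^\vee(\xi^{-1}t)\bigl(\mathcal {\tilde G}(q\xi^{-1}t)\bigr)^{-1}\big/(q-q^{-1})$. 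The remaining task is to match this with the stated form \eqref{eq:e}, in which the two inverse-$\mathcal {\tilde G}$ factors sit to the \emph{right} of $\mathcal Z^\vee(\xi^{-1}t)$ rather than straddling it. To move the left factor $\bigl(\mathcal {\tilde G}(q^{-1}\xi^{-1}t)\bigr)^{-1}$ across $\mathcal Z^\vee(\xi^{-1}t)$, I would use that $\mathcal Z^\vee$ is central: its coefficients $\mathcal Z^\vee_n$ lie in $\mathcal Z$ by Lemma \ref{lem:Zfix}, so $\mathcal Z^\vee(\xi^{-1}t)$ commutes with everything, in particular with $\bigl(\mathcal {\tilde G}(q^{-1}\xi^{-1}t)\bigr)^{-1}$. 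Commuting the two leading factors past $\mathcal Z^\vee(\xi^{-1}t)$ and using $\lbrack \mathcal {\tilde G}(s), \mathcal {\tilde G}(t)\rbrack=0$ (the right-hand relation in \eqref{eq:3pp10}) to reorder the two inverse factors produces exactly \eqref{eq:e}.

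The step I expect to carry the most subtlety is the handling of the variable substitutions and the $\xi$-bookkeeping: one must confirm that the argument shifts $q^{-1}\xi t$, $q^2 t$, and so on compose correctly under $t \mapsto q\xi^{-1}t$ to land on the clean arguments $q^{-1}\xi^{-1}t$ and $q\xi^{-1}t$ appearing in the theorem. A convenient safeguard is that \eqref{eq:em}, \eqref{eq:ep}, \eqref{eq:e} must be mutually consistent with the defining relations recalled in Section 5; in particular, once all three are in hand, one can substitute them into Proposition \ref{prop:New} and check that \eqref{eq:New} reduces to an identity among the generating functions $\mathcal W^\pm$, $\mathcal {\tilde G}$, $\mathcal Z^\vee$ that is already guaranteed by Lemma \ref{lem:zV1} and Corollary \ref{cor:perm}. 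This consistency check is not logically required for the proof but serves as a reliable way to detect any sign or power-of-$q$ slip introduced during the rearrangements.
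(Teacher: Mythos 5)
Your proposal is correct and follows essentially the same route as the paper: the paper obtains \eqref{eq:em} and \eqref{eq:ep} by substituting $t\mapsto q^{-1}\xi^{-1}t$ and $t\mapsto q\xi^{-1}t$ directly into the left-hand equations of \eqref{eq:long1}, \eqref{eq:long2}, and obtains \eqref{eq:e} by substituting $t\mapsto\xi^{-1}t$ into Corollary \ref{cor:perm}; you enter through the right-hand equations plus Lemma \ref{lem:gi} and through Corollary \ref{cor:BZ} plus centrality of $\mathcal Z^\vee$, which are trivially equivalent starting points since \eqref{eq:gi1}, \eqref{eq:gi2} and Corollary \ref{cor:perm} were themselves derived from exactly those comparisons. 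The only nitpick is that in your treatment of \eqref{eq:e} no reordering of the two inverse $\mathcal{\tilde G}$-factors is actually needed once the left factor is commuted past $\mathcal Z^\vee(\xi^{-1}t)$; this does not affect correctness.
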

 \begin{proof} To get \eqref{eq:em},  replace $t$ by $q^{-1}\xi^{-1}t$ in the equation on the left in \eqref{eq:long1}.
 To get \eqref{eq:ep}, replace $t$ by $q\xi^{-1}t$ in the equation on the left in \eqref{eq:long2}.
 To get \eqref{eq:e}, replace $t$ by $ \xi^{-1} t$ in Corollary \ref{cor:perm}.
  \end{proof}

\noindent In Section 5, we gave some
relations involving $E^{\pm} (t)$, $E(t)$. Our next goal is to recover these relations using Theorem \ref{prop:back1}.
In order to  make use of Theorem \ref{prop:back1}, we display some equations involving $\bigl(\mathcal {\tilde G}(t)\bigr)^{-1}$.

\begin{proposition} 
\label{thm:mgi}
For the algebra $\mathcal U^+_q$,
\begin{align}
&\quad \bigl(\mathcal {\tilde G}(s)\bigr)^{-1} \mathcal {\tilde G}(t) = \mathcal {\tilde G}(t) \bigl(\mathcal {\tilde G}(s)\bigr)^{-1}, \qquad
 \bigl(\mathcal {\tilde G}(s)\bigr)^{-1} \bigl( \mathcal {\tilde G}(t)\bigr)^{-1} = \bigl(\mathcal {\tilde G}(t) \bigr)^{-1}\bigl(\mathcal {\tilde G}(s)\bigr)^{-1}, \label{eq:G1}
\\
\begin{split}
&\quad\bigl(\mathcal {\tilde G}(s)\bigr)^{-1} \mathcal W^-(t) =\\
&\frac{q(s-t)  \mathcal W^-(t) \bigl(\mathcal {\tilde G}(s)\bigr)^{-1}- (q-q^{-1}) s  \mathcal W^-(q^{-2}s) \bigl(\mathcal {\tilde G}(s)\bigr)^{-1} \bigl(\mathcal {\tilde G}(q^{-2}s)\bigr)^{-1} \mathcal {\tilde G}(t)}{q^{-1}s-qt},
\end{split}\label{eq:G2}
\\
 \begin{split}
&\quad    \bigl(\mathcal {\tilde G}(s)\bigr)^{-1} \mathcal W^+(t) =\\
&\frac{q^{-1}(s-t)  \mathcal W^+(t)\bigl(\mathcal {\tilde G}(s)\bigr)^{-1}  +(q-q^{-1}) s   \mathcal W^+(q^{2}s)  \bigl(\mathcal {\tilde G}(s)\bigr)^{-1} \bigl(\mathcal {\tilde G}(q^{2}s)\bigr)^{-1} \mathcal {\tilde G}(t)  }{qs-q^{-1}t},
\end{split}\label{eq:G3}
\\
&\quad \bigl(\mathcal {\tilde G}(s)\bigr)^{-1} \mathcal Z^\vee(t) =  \mathcal Z^\vee(t)  \bigl(\mathcal {\tilde G}(s)\bigr)^{-1}.
\label{eq:G4s}
\end{align}
\end{proposition}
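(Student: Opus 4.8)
The plan is to treat the four identities in increasing order of difficulty, disposing of \eqref{eq:G1} and \eqref{eq:G4s} by pure commutativity, and then deriving the two substantive identities \eqref{eq:G2} and \eqref{eq:G3} by conjugating the appropriate reduction rules of Lemma~\ref{lem:redrel2} by $(\mathcal{\tilde G}(s))^{-1}$. Throughout I use that $\mathcal{\tilde G}(t)$ is invertible, since $\mathcal{\tilde G}_0=1$ (Lemma~\ref{lem:inv}).

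For \eqref{eq:G1} I would start from the right-hand relation in \eqref{eq:3pp10}, namely $\mathcal{\tilde G}(s)\mathcal{\tilde G}(t)=\mathcal{\tilde G}(t)\mathcal{\tilde G}(s)$, and invoke the elementary fact that invertible commuting elements have commuting inverses: left- and right-multiplying this relation by $(\mathcal{\tilde G}(s))^{-1}$ yields the first identity of \eqref{eq:G1}, and inverting both sides of the first identity yields the second. For \eqref{eq:G4s} I would recall from Lemma~\ref{lem:Zfix} that each coefficient $\mathcal Z^\vee_n$ lies in the center $\mathcal Z$, so $\mathcal Z^\vee(t)$ commutes with $\mathcal{\tilde G}(s)$ and hence with $(\mathcal{\tilde G}(s))^{-1}$, which is exactly the assertion.

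The core of the proposition is \eqref{eq:G2}. The plan is to multiply the reduction rule for $\mathcal{\tilde G}(s)\mathcal W^-(t)$ in Lemma~\ref{lem:redrel2} through by $(s-t)$ to obtain
\begin{align*}
(s-t)\mathcal{\tilde G}(s)\mathcal W^-(t) = q^{-1}(q^{-1}s-qt)\mathcal W^-(t)\mathcal{\tilde G}(s) + q^{-1}(q-q^{-1})s\,\mathcal W^-(s)\mathcal{\tilde G}(t),
\end{align*}
and then to conjugate: left-multiplying by $(\mathcal{\tilde G}(s))^{-1}$ clears the leading factor, and a subsequent right-multiplication by $(\mathcal{\tilde G}(s))^{-1}$ produces the desired $(\mathcal{\tilde G}(s))^{-1}\mathcal W^-(t)$ together with $\mathcal W^-(t)(\mathcal{\tilde G}(s))^{-1}$. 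In the remaining term I would move $(\mathcal{\tilde G}(s))^{-1}$ past $\mathcal{\tilde G}(t)$ using the first identity of \eqref{eq:G1}, collapse $(\mathcal{\tilde G}(s))^{-1}\mathcal W^-(s)$ into $\mathcal W^-(q^{-2}s)(\mathcal{\tilde G}(q^{-2}s))^{-1}$ via \eqref{eq:gi1}, and reorder the two commuting inverse factors using the second identity of \eqref{eq:G1}. Solving the resulting equation for $(\mathcal{\tilde G}(s))^{-1}\mathcal W^-(t)$, i.e. dividing by $q^{-1}(q^{-1}s-qt)$, reproduces \eqref{eq:G2} verbatim.

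Finally, \eqref{eq:G3} follows from the identical manipulation applied to the reduction rule for $\mathcal{\tilde G}(s)\mathcal W^+(t)$ in Lemma~\ref{lem:redrel2}, now using \eqref{eq:gi2} in place of \eqref{eq:gi1} to collapse $(\mathcal{\tilde G}(s))^{-1}\mathcal W^+(s)$ into $\mathcal W^+(q^2s)(\mathcal{\tilde G}(q^2s))^{-1}$. The only genuine obstacle is the careful bookkeeping: at each step one must track which commutation law (an identity of \eqref{eq:G1} or one of the two cases of Lemma~\ref{lem:gi}) is being applied, so that the shifted arguments $q^{\mp 2}s$ and the placement of the inverse factors match the asserted right-hand sides exactly. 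No idea beyond Lemma~\ref{lem:redrel2} and Lemma~\ref{lem:gi} is required.
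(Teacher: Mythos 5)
Your proposal is correct and follows essentially the same route as the paper: \eqref{eq:G1} from the right-hand relation of \eqref{eq:3pp10}, \eqref{eq:G4s} from centrality of $\mathcal Z^\vee(t)$, and \eqref{eq:G2}, \eqref{eq:G3} by conjugating the fifth and last displayed equations of Lemma~\ref{lem:redrel2} by $\bigl(\mathcal {\tilde G}(s)\bigr)^{-1}$ on both sides, eliminating $\bigl(\mathcal {\tilde G}(s)\bigr)^{-1}\mathcal W^{\mp}(s)$ via Lemma~\ref{lem:gi}, and solving. The bookkeeping you describe matches the asserted right-hand sides exactly.
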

\begin{proof} The equations in \eqref{eq:G1} follow from the equation on the right in  \eqref{eq:3pp10}. To obtain \eqref{eq:G2}, start with the fifth displayed equation in
Lemma \ref{lem:redrel2}. In this equation, multiply each term on the left by $\bigl( \mathcal {\tilde G}(s) \bigr)^{-1}$ and on the right by $\bigl( \mathcal {\tilde G}(s) \bigr)^{-1}$.
In the resulting equation, eliminate $\bigl( \mathcal {\tilde G}(s) \bigr)^{-1} \mathcal W^-(s)$  using \eqref{eq:gi1} and then solve for $\bigl(\mathcal {\tilde G}(s)\bigr)^{-1} \mathcal W^-(t)$ to get \eqref{eq:G2}.
To obtain \eqref{eq:G3}, start with the last displayed equation in
Lemma \ref{lem:redrel2}. In this equation, multiply each term on the left by $\bigl( \mathcal {\tilde G}(s) \bigr)^{-1}$ and on the right by $\bigl( \mathcal {\tilde G}(s) \bigr)^{-1}$.
In the resulting equation, eliminate $\bigl( \mathcal {\tilde G}(s) \bigr)^{-1} \mathcal W^+(s)$  using \eqref{eq:gi2} and then solve for $\bigl(\mathcal {\tilde G}(s)\bigr)^{-1} \mathcal W^+(t)$ to get \eqref{eq:G3}.
 Equation \eqref{eq:G4s} holds since $\mathcal Z^\vee(t)$ is central.
\end{proof}

\noindent  Line \eqref{eq:EsEt} and Propositions \ref{prop:New}, \ref{prop:GFwang}, \ref{prop:wangGF} contain
some relations involving  $E^\pm(t)$, $E(t)$. These relations can be recovered using Theorem  \ref{prop:back1} along with 
Lemmas
 \ref{lem:redrel2}, 
 \ref{lem:gi}, 
 \ref{lem:sw} and
Proposition \ref{thm:mgi}.
The  calculations are routine and omitted.
Lemmas 
\ref{lem:BPhi}, \ref{prop:pbwRelP} 
 and Corollary  \ref{prop:pbwRel} can be obtained using Remark \ref{rem:follow}. They can also be obtained using Theorem  \ref{prop:back1} along with Proposition \ref{thm:mgi} and the following results.

\begin{lemma}
\label{lem:redrel1}
For the algebra $\mathcal U^+_q$,
\begin{align*}
\mathcal W_0 \mathcal G(t) &= q^{-2} \mathcal G(t) \mathcal W_0 + (1-q^{-2})\mathcal W^-(t),
\\
\mathcal W_0 \mathcal W^-(t)&=
\mathcal W^-(t) \mathcal W_0,
\\
\mathcal W^+(t) \mathcal W_0 
&= 
\mathcal W_0 \mathcal W^+(t)+(1-q^{-2})t^{-1} 
\bigl(\mathcal G(t)-\mathcal {\tilde G}(t)\bigr),
\\
\mathcal {\tilde G}(t) \mathcal W_0 
&= 
q^{-2} \mathcal W_0 \mathcal {\tilde G}(t)
+(1-q^{-2})
\mathcal W^-(t)
\end{align*}
\noindent and
\begin{align*}
\mathcal W_1 \mathcal G(t) 
&= 
q^2 \mathcal G(t)  \mathcal W_1 +(1-q^{2})
\mathcal W^+(t),
\\
\mathcal W_1 \mathcal W^-(t) 
&= 
\mathcal W^-(t) \mathcal W_1+(1-q^{-2})t^{-1} 
\bigl(\mathcal G(t)-\mathcal {\tilde G}(t)\bigr),
\\
\mathcal W^+(t) \mathcal W_1&=
\mathcal W_1 \mathcal W^+(t),
\\
 \mathcal {\tilde G}(t) \mathcal W_1
&= 
q^2 \mathcal W_1 \mathcal {\tilde G}(t)  +(1-q^{2})
\mathcal W^+(t),
\end{align*}
\end{lemma}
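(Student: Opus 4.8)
The plan is to establish Lemma~\ref{lem:redrel1} by specializing the generating-function relations already in hand, since each displayed equation is simply a relation between $\mathcal W_0$ (or $\mathcal W_1$) and one of the generating functions $\mathcal G(t)$, $\mathcal {\tilde G}(t)$, $\mathcal W^\pm(t)$. First I would recall that $\mathcal W_0 = \mathcal W^-(0)$ and $\mathcal W_1 = \mathcal W^+(0)$, so these identities should emerge by setting $s=0$ in the generating-function reductions of Lemma~\ref{lem:redrel2} and in the relations \eqref{eq:3pp1}--\eqref{eq:3pp3} of Lemma~\ref{lem:ad}. For instance, the fourth equation $\mathcal {\tilde G}(t)\mathcal W_0 = q^{-2}\mathcal W_0 \mathcal {\tilde G}(t) + (1-q^{-2})\mathcal W^-(t)$ is just a rewriting of the first relation in \eqref{eq:3pp2}, namely $\lbrack \mathcal {\tilde G}(t), \mathcal W_0\rbrack_q = (q-q^{-1})\mathcal W^-(t)$, after expanding the $q$-commutator and dividing by $q$; similarly the last equation in the second block follows from the second relation in \eqref{eq:3pp3}.

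For the equations asserting that $\mathcal W_0$ commutes with $\mathcal W^-(t)$ and $\mathcal W_1$ commutes with $\mathcal W^+(t)$, I would specialize \eqref{eq:3pp4}: setting $s=0$ in $\lbrack \mathcal W^-(s),\mathcal W^-(t)\rbrack = 0$ gives $\lbrack \mathcal W_0, \mathcal W^-(t)\rbrack = 0$, and likewise the second equation in \eqref{eq:3pp4} yields $\lbrack \mathcal W_1, \mathcal W^+(t)\rbrack = 0$. The two equations expressing $\mathcal W^+(t)\mathcal W_0$ and $\mathcal W_1 \mathcal W^-(t)$ as a commuted product plus a $t^{-1}(\mathcal G - \mathcal {\tilde G})$ correction are direct restatements of the commutators in \eqref{eq:3pp1}: the identity $\lbrack \mathcal W^-(t),\mathcal W_1\rbrack = (1-q^{-2})t^{-1}(\mathcal {\tilde G}(t)-\mathcal G(t))$ rearranges to the stated form for $\mathcal W_1 \mathcal W^-(t)$, and the companion relation $\lbrack \mathcal W_0,\mathcal W^+(t)\rbrack = (1-q^{-2})t^{-1}(\mathcal {\tilde G}(t)-\mathcal G(t))$ gives the $\mathcal W^+(t)\mathcal W_0$ equation after moving the sign into the $(\mathcal G - \mathcal {\tilde G})$ ordering.

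That leaves the two equations for $\mathcal W_0 \mathcal G(t)$ and $\mathcal W_1 \mathcal G(t)$, which come from the first relations in \eqref{eq:3pp2} and \eqref{eq:3pp3}. The relation $\lbrack \mathcal W_0, \mathcal G(t)\rbrack_q = (q-q^{-1})\mathcal W^-(t)$ expands as $q\,\mathcal W_0 \mathcal G(t) - q^{-1}\mathcal G(t)\mathcal W_0 = (q-q^{-1})\mathcal W^-(t)$; dividing by $q$ produces $\mathcal W_0 \mathcal G(t) = q^{-2}\mathcal G(t)\mathcal W_0 + (1-q^{-2})\mathcal W^-(t)$, and the $\mathcal W_1$ version follows identically from $\lbrack \mathcal G(t),\mathcal W_1\rbrack_q = (q-q^{-1})\mathcal W^+(t)$, yielding the factor $q^2$ and $(1-q^2)$ after clearing the $q^{-1}$ in front of $\mathcal W_1 \mathcal G(t)$. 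None of these steps is genuinely difficult; the only point requiring care is bookkeeping the $q$ versus $q^{-1}$ conventions in the definition of $\lbrack\cdot,\cdot\rbrack_q$ so that the coefficients $q^{\pm 2}$ and $(1-q^{\pm 2})$ land correctly, together with checking that the $s=0$ specializations of the two-variable relations are legitimate (which they are, since $\mathcal W^\pm(0)$ are well-defined coefficients). The main obstacle, such as it is, is purely organizational: matching each of the eight target equations to its source relation and verifying the arithmetic of the commutator expansions, which I would treat as routine and largely suppress.
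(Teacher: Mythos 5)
Your proposal is correct and matches the paper's proof, which is simply ``Use \eqref{eq:3pp1}--\eqref{eq:3pp4}'': each of the eight equations is obtained by expanding the relevant commutator or $q$-commutator from Lemma \ref{lem:ad} (with $s=0$ in \eqref{eq:3pp4} for the two commutation statements), exactly as you describe. The passing mention of Lemma \ref{lem:redrel2} is unnecessary but harmless.
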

\begin{proof} Use 
\eqref{eq:3pp1}--\eqref{eq:3pp4}.
\end{proof}

\begin{corollary}
\label{cor:mgi}
For the algebra $\mathcal U^+_q$,
\begin{align}
&\bigl(\mathcal {\tilde G}(t)\bigr)^{-1} \mathcal W_0 = q^2 \mathcal W_0 \bigl(\mathcal {\tilde G}(t)\bigr)^{-1}- q(q-q^{-1})   \mathcal W^-(q^{-2}t) \bigl(\mathcal {\tilde G}(q^{-2}t)\bigr)^{-1} \bigl(\mathcal {\tilde G}(t)\bigr)^{-1}, \label{eq:G5}
\\
 & \bigl(\mathcal {\tilde G}(t)\bigr)^{-1}\mathcal W_1=q^{-2}  \mathcal W_1 \bigl(\mathcal {\tilde G}(t)\bigr)^{-1}  + q^{-1}(q-q^{-1})\mathcal W^+(q^{2}t)  \bigl(\mathcal {\tilde G}(q^{2}t)\bigr)^{-1} \bigl(\mathcal {\tilde G}(t)\bigr)^{-1}.
 \label{eq:G6}
\end{align}
\end{corollary}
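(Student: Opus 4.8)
The plan is to derive each of \eqref{eq:G5} and \eqref{eq:G6} directly from the appropriate reduction rule in Lemma~\ref{lem:redrel1} by conjugating with $\bigl(\mathcal{\tilde G}(t)\bigr)^{-1}$ and then invoking Lemma~\ref{lem:gi}. Since $\mathcal{\tilde G}_0=1$, Lemma~\ref{lem:inv} guarantees that $\mathcal{\tilde G}(t)$ is invertible, so these manipulations are legitimate.

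For \eqref{eq:G5} I would start from the fourth displayed equation in the first block of Lemma~\ref{lem:redrel1}, namely $\mathcal{\tilde G}(t)\mathcal W_0 = q^{-2}\mathcal W_0\mathcal{\tilde G}(t)+(1-q^{-2})\mathcal W^-(t)$. Multiplying on the left and on the right by $\bigl(\mathcal{\tilde G}(t)\bigr)^{-1}$ gives $\mathcal W_0\bigl(\mathcal{\tilde G}(t)\bigr)^{-1}=q^{-2}\bigl(\mathcal{\tilde G}(t)\bigr)^{-1}\mathcal W_0+(1-q^{-2})\bigl(\mathcal{\tilde G}(t)\bigr)^{-1}\mathcal W^-(t)\bigl(\mathcal{\tilde G}(t)\bigr)^{-1}$. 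Solving for $\bigl(\mathcal{\tilde G}(t)\bigr)^{-1}\mathcal W_0$ and using $q^2(1-q^{-2})=q(q-q^{-1})$ yields $\bigl(\mathcal{\tilde G}(t)\bigr)^{-1}\mathcal W_0=q^2\mathcal W_0\bigl(\mathcal{\tilde G}(t)\bigr)^{-1}-q(q-q^{-1})\bigl(\mathcal{\tilde G}(t)\bigr)^{-1}\mathcal W^-(t)\bigl(\mathcal{\tilde G}(t)\bigr)^{-1}$. Finally I would eliminate the factor $\bigl(\mathcal{\tilde G}(t)\bigr)^{-1}\mathcal W^-(t)$ using \eqref{eq:gi1}, which introduces the shift $t\mapsto q^{-2}t$ and produces exactly \eqref{eq:G5}.

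For \eqref{eq:G6} I would run the analogous computation on the last displayed equation of Lemma~\ref{lem:redrel1}, namely $\mathcal{\tilde G}(t)\mathcal W_1=q^2\mathcal W_1\mathcal{\tilde G}(t)+(1-q^2)\mathcal W^+(t)$: conjugate by $\bigl(\mathcal{\tilde G}(t)\bigr)^{-1}$, solve for $\bigl(\mathcal{\tilde G}(t)\bigr)^{-1}\mathcal W_1$ using $-q^{-2}(1-q^2)=q^{-1}(q-q^{-1})$, and then apply \eqref{eq:gi2} to rewrite $\bigl(\mathcal{\tilde G}(t)\bigr)^{-1}\mathcal W^+(t)$. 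One might hope to obtain \eqref{eq:G6} from \eqref{eq:G5} by applying the automorphism $\sigma$ of Lemma~\ref{lem:autAc}, which interchanges $\mathcal W_0\leftrightarrow\mathcal W_1$ and $\mathcal W^-\leftrightarrow\mathcal W^+$; but since $\sigma$ sends $\mathcal{\tilde G}(t)$ to $\mathcal G(t)$ rather than fixing it, the resulting relation is not \eqref{eq:G6}, and the direct computation above is the cleaner route.

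The computation involves no real obstacle; the only point requiring care is the bookkeeping of the scalar prefactors, in particular verifying the two identities $q^2(1-q^{-2})=q(q-q^{-1})$ and $-q^{-2}(1-q^2)=q^{-1}(q-q^{-1})$ that convert the coefficients coming out of Lemma~\ref{lem:redrel1} into the form displayed in the statement, together with correctly tracking the argument shifts $t\mapsto q^{\mp2}t$ introduced by Lemma~\ref{lem:gi}.
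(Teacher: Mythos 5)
Your proof is correct, and every step checks out: the scalar identities $q^{2}(1-q^{-2})=q(q-q^{-1})$ and $-q^{-2}(1-q^{2})=q^{-1}(q-q^{-1})$ are right, and substituting \eqref{eq:gi1} into $\bigl(\mathcal {\tilde G}(t)\bigr)^{-1}\mathcal W^-(t)\bigl(\mathcal {\tilde G}(t)\bigr)^{-1}$ produces exactly the factor $\mathcal W^-(q^{-2}t)\bigl(\mathcal {\tilde G}(q^{-2}t)\bigr)^{-1}\bigl(\mathcal {\tilde G}(t)\bigr)^{-1}$ appearing in \eqref{eq:G5}, and likewise for \eqref{eq:G6}. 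Your route differs from the paper's: the paper obtains the corollary by setting $s=t'$ and $t=0$ in \eqref{eq:G2}, \eqref{eq:G3} of Proposition \ref{thm:mgi} and evaluating with $\mathcal W^-(0)=\mathcal W_0$, $\mathcal W^+(0)=\mathcal W_1$, $\mathcal {\tilde G}(0)=1$, whereas you bypass Proposition \ref{thm:mgi} entirely and work directly from the two $q$-commutator relations in Lemma \ref{lem:redrel1} together with Lemma \ref{lem:gi}. The two arguments are close cousins, since \eqref{eq:G2} and \eqref{eq:G3} were themselves proved using \eqref{eq:gi1}, \eqref{eq:gi2}; in effect you have unwound the paper's proof of the general two-variable identity at the special point $t=0$. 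The paper's version buys a one-line deduction once the heavier Proposition \ref{thm:mgi} is in hand (which the paper needs anyway for other computations), while yours is more self-contained and rests only on the simpler single-variable relations. Your remark that applying $\sigma$ to \eqref{eq:G5} would not yield \eqref{eq:G6} (because $\sigma$ sends $\mathcal {\tilde G}(t)\mapsto\mathcal G(t)$) is also correct; the map that would work here is the antiautomorphism $\tau$ of Definition \ref{def:tauAc}, which fixes $\mathcal {\tilde G}(t)$ and swaps $\mathcal W^-(t)\leftrightarrow\mathcal W^+(t)$, but your direct computation is equally clean.
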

\begin{proof} Set $s=t'$ and $t=0$ in \eqref{eq:G2}, \eqref{eq:G3}. Evaluate the results using $\mathcal W^-(0)=\mathcal W_0$ and  $\mathcal W^+(0)=\mathcal W_1$ and $\mathcal {\tilde G}(0)=1$.
\end{proof}

\section{Acknowledgements}
The author thanks Pascal Baseilhac for many conversations about $U^+_q$ and $\mathcal U^+_q$.

\section{Appendix A: An earlier PBW basis for $\mathcal U^+_q$}

In \cite[Theorem~10.2]{altCE} we gave a PBW basis for $\mathcal U^+_q$. In the present section we recall this PBW basis, and give the corresponding reduction rules.

\begin{lemma} \label{lem:pbwP} {\rm (See \cite[Theorem~10.2]{altCE}.)} A PBW basis for $\mathcal U^+_q$ is obtained by its alternating generators in any linear order $<$ such that
\begin{align}
\mathcal W_{-i} < \mathcal G_{j+1} < \mathcal {\tilde G}_{k+1} < \mathcal W_{\ell+1}\qquad \qquad i,j,k, \ell \in \mathbb N. \label{eq:orderP}
\end{align}
\end{lemma}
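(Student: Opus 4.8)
Since the statement is recorded as \cite[Theorem~10.2]{altCE}, the most economical course is simply to cite it; here I sketch how I would reprove it from the defining relations. The plan splits into two independent halves: showing that the ordered monomials \emph{span} $\mathcal U^+_q$, and showing that they are \emph{linearly independent}. For spanning I would run a rewriting (Bergman Diamond Lemma) argument driven by the relations of Definition~\ref{def:Aq}; for independence I would push everything through the isomorphism $\varphi\colon \mathcal U^+_q \to U^+_q \otimes \mathbb F\lbrack z_1, z_2, \ldots\rbrack$ from the Introduction, where Damiani's PBW basis (Proposition~\ref{prop:PBWbasis}) is available.

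First I would fix a well-order on words in the alternating generators refining the generator order \eqref{eq:orderP}, for instance length-dominant lexicographic. I would then orient each relation of Definition~\ref{def:Aq} into a reduction rule that rewrites an out-of-order product as a combination of ordered (or strictly smaller) monomials. Within each of the four families the generators commute by \eqref{eq:3p4} and \eqref{eq:3p10}, so reordering inside a family is free. The cross-family rules come from \eqref{eq:3p2}, \eqref{eq:3p3}, \eqref{eq:3p5}--\eqref{eq:3p9}, \eqref{eq:3p11}; the entangled relations, which couple two commutators (e.g.\ \eqref{eq:3p6}), must be solved by induction on the indices to isolate a single leading product and thereby yield a genuine rule. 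For example \eqref{eq:3p2} orients to $\mathcal G_{k+1}\mathcal W_0 = q^2 \mathcal W_0 \mathcal G_{k+1} - q(q-q^{-1})\mathcal W_{-k-1}$, whose right-hand side is smaller in the chosen order. Iterating the rules terminates, so every word reduces to an ordered monomial and the ordered monomials span.

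For linear independence I would apply $\varphi$ and filter $U^+_q \otimes \mathbb F\lbrack z_1, z_2, \ldots\rbrack$ by total degree in the $z_n$. Each alternating generator has image with a distinguished leading piece: modulo positive $z$-degree, $\varphi$ sends $\mathcal W_{-n}, \mathcal W_{n+1}, \mathcal G_n, \mathcal {\tilde G}_n$ to the original alternating elements $W_{-n}, W_{n+1}, G_n, \tilde G_n$ of $U^+_q$ tensored with $1$, while the top $z$-degree pieces of $\mathcal G_n$ and $\mathcal {\tilde G}_n$ are $1\otimes z_n$. Using the known expansion of the alternating elements of $U^+_q$ in Damiani's basis, I would check that the images of the ordered monomials are triangular with respect to the product basis consisting of Damiani monomials on the $U^+_q$ factor and ordinary monomials in the $z_n$ on the polynomial factor. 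Triangularity makes the change of basis invertible, giving independence in $U^+_q \otimes \mathbb F\lbrack z_1, z_2, \ldots\rbrack$ and hence, via $\varphi$, in $\mathcal U^+_q$.

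The main obstacle is confluence of the reduction system: because the relations \eqref{eq:3p6}--\eqref{eq:3p9} and \eqref{eq:3p11} have nontrivial right-hand sides, their overlap ambiguities generate long identities that must be reconciled against Definition~\ref{def:Aq}. Routing independence through $\varphi$ is exactly what lets me sidestep verifying the full diamond condition: I then need only termination of the reductions (for spanning) together with the triangularity bookkeeping under $\varphi$ (for independence), and these two halves combine to yield the asserted PBW basis.
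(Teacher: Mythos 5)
The paper offers no proof of this lemma at all: it is quoted verbatim from \cite[Theorem~10.2]{altCE}, so your primary recommendation --- simply to cite that result --- is exactly what the paper does. Your supplementary sketch (oriented rewriting of the relations in Definition~\ref{def:Aq} for spanning, and the isomorphism $\varphi\colon \mathcal U^+_q \to U^+_q \otimes \mathbb F\lbrack z_1,z_2,\ldots\rbrack$ for linear independence) is consistent with the strategy of the cited reference, though the triangularity bookkeeping you defer is where essentially all of the work lies.
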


\noindent For the above PBW basis, the nontrivial reduction rules are a consequence of the following result.
\begin{lemma}
\label{lem:redrel2A} {\rm(See \cite[Lemma~A.6]{altCE}.)}
For the algebra $\mathcal U^+_q$ we have
\begin{align*}
\mathcal W^+(s) \mathcal W^-(t) &= 
\mathcal W^-(t) \mathcal W^+(s) +
(1-q^{-2})\frac{\mathcal G(s) \mathcal {\tilde G}(t)-\mathcal G(t)
\mathcal {\tilde G}(s)}{s-t},
\\
\mathcal {\tilde G}(s) \mathcal G(t) &= 
\mathcal G(t) \mathcal {\tilde G}(s) +
(1-q^{2})st \frac{\mathcal W^-(t) \mathcal W^{+}(s)-\mathcal W^-(s)
\mathcal W^+(t)}{s-t}
\end{align*}
\noindent and also
\begin{align*}
\mathcal G(s) \mathcal W^-(t) &= 
q \frac{(qs-q^{-1}t)\mathcal W^-(t) \mathcal G(s) - (q-q^{-1})s 
\mathcal W^-(s) \mathcal G(t)}{s-t},
\\
\mathcal W^+(s) \mathcal G(t) &= 
q \frac{(q^{-1}s-qt)\mathcal G(t) \mathcal W^+(s) + (q-q^{-1})t 
\mathcal G(s) \mathcal W^+(t)}{s-t},
\\
\mathcal {\tilde G}(s) \mathcal W^-(t) &= 
q^{-1} \frac{(q^{-1}s-qt)\mathcal W^-(t) \mathcal {\tilde G}(s) + (q-q^{-1})s 
\mathcal W^-(s) \mathcal {\tilde G}(t)}{s-t},
\\
\mathcal W^+(s) \mathcal {\tilde G}(t) &= 
q^{-1} \frac{(qs-q^{-1}t)\mathcal {\tilde G}(t) \mathcal W^+(s) - (q-q^{-1})t 
\mathcal {\tilde G}(s) \mathcal W^+(t)}{s-t}.
\end{align*}
\end{lemma}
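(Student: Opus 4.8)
The plan is to check each of the six identities after applying the injective algebra isomorphism $\varphi\colon \mathcal U^+_q \to U^+_q \otimes \mathbb F\lbrack z_1,z_2,\ldots\rbrack$, thereby reducing everything to the level of $U^+_q$. Set $z(t)=\sum_{k\in\mathbb N} z_k t^k$ (so that $z_0=1$ and $z(t)$ is an invertible power series), and write $W^\pm(t)$, $G(t)$, $\tilde G(t)$ for the generating functions of the alternating elements $\lbrace W_{-k}\rbrace_{k\in\mathbb N}$, $\lbrace W_{k+1}\rbrace_{k\in\mathbb N}$, $\lbrace G_{k+1}\rbrace_{k\in\mathbb N}$, $\lbrace \tilde G_{k+1}\rbrace_{k\in\mathbb N}$ of $U^+_q$. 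A routine reindexing of the displayed action of $\varphi$ on the generators shows that $\varphi$ acts diagonally on these generating functions:
\begin{align*}
\varphi\bigl(\mathcal W^-(t)\bigr)=W^-(t)\otimes z(t), \qquad\quad \varphi\bigl(\mathcal W^+(t)\bigr)=W^+(t)\otimes z(t),
\end{align*}
and likewise $\varphi(\mathcal G(t))=G(t)\otimes z(t)$ and $\varphi(\mathcal{\tilde G}(t))=\tilde G(t)\otimes z(t)$.

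First I would observe that every term occurring in the six identities is bilinear, carrying exactly one generating function in the variable $s$ and one in $t$; consequently each term maps under $\varphi$ to its $U^+_q$-analogue tensored with the common factor $z(s)z(t)$. Since $z(t)$ has constant term $1$, this factor is invertible and may be cancelled, so each identity is equivalent to the formally identical statement with $\mathcal W^\pm, \mathcal G, \mathcal{\tilde G}$ replaced throughout by $W^\pm, G, \tilde G$. These reduced identities live in $U^+_q$ and are exactly the reordering relations among its alternating elements; they may be verified by applying the Rosso embedding $\natural$, under which $W^\pm(t), G(t), \tilde G(t)$ become explicit series of alternating words in $x,y$ (for example $\natural\bigl(W^-(t)\bigr)=\sum_{n\in\mathbb N} x(yx)^n t^n$), so that each becomes a $q$-shuffle computation carried out degree by degree, or else read off from \cite{alternating}. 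The symmetries $\sigma$ and $\dagger$ of Lemmas \ref{lem:autAc} and \ref{lem:antiautAc} relate several of the six identities to one another, so that only a few need be verified independently.

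The step I expect to be the main obstacle is this final $U^+_q$-level verification: the $q$-shuffle products of the alternating words must be expanded and the various $q$-power coefficients matched at each degree, which is where the real content lies. A self-contained alternative, bypassing $\varphi$, is to unpack each identity into its component relations and prove these directly from Definition \ref{def:Aq} by induction on the two indices: specializing $s=0$ or $t=0$ recovers the base cases of Lemma \ref{lem:redrel1} (equivalently the $q$-commutator relations \eqref{eq:3p1}--\eqref{eq:3p3}), while the antisymmetry relations \eqref{eq:3p5}--\eqref{eq:3p11} propagate these along the anti-diagonals in the two indices. The delicate point in that route is that a single antisymmetry relation yields only commutator symmetry — for instance \eqref{eq:3pp7} reproduces nothing more than \eqref{eq:3p7} — so that recovering the full $q$-graded reduction rule forces one to combine it with the $q$-commutator relations \eqref{eq:3p2}, \eqref{eq:3p3} and to track the resulting coefficients carefully through the induction.
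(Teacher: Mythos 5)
The paper does not actually prove this lemma: it is imported verbatim from \cite[Lemma~A.6]{altCE}, where the six identities are generating-function reformulations of reduction rules derived by direct manipulation of the defining relations in Definition \ref{def:Aq} (they are needed there for the spanning half of the PBW theorem, hence are obtained before the isomorphism $\varphi$ is available). Your primary route is therefore genuinely different, and it is sound granted the results this paper imports: the computation $\varphi(\mathcal W^{\pm}(t))=W^{\pm}(t)\otimes z(t)$ and its analogues for $\mathcal G(t)$, $\mathcal {\tilde G}(t)$ is correct; every term in the six identities (including the numerators over $s-t$) is bilinear with one factor in $s$ and one in $t$, so under $\varphi$ it acquires the single factor $1\otimes z(s)z(t)$, which is central, symmetric in $s,t$, and invertible, hence cancels; and injectivity of $\varphi$ then reduces the lemma to the corresponding identities for the alternating elements $W^{\pm}(t)$, $G(t)$, $\tilde G(t)$ of $U^+_q$, which are exactly the relations established in \cite{alternating} via the $q$-shuffle algebra. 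What each approach buys: yours is short and conceptually clean but leans on two heavy imported facts (bijectivity of $\varphi$ and the shuffle-algebra relations of \cite{alternating}); the cited derivation is self-contained relative to Definition \ref{def:Aq} at the cost of the two-index bookkeeping you describe.

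Two caveats. First, if this were intended as a from-scratch proof inside the citation chain it would be circular: the injectivity of $\varphi$ in \cite{altCE} rests on the PBW basis of $\mathcal U^+_q$, whose spanning argument uses precisely these reduction rules. Within the present paper, which takes $\varphi$ as given, there is no circularity, but the dependence is worth flagging. Second, your ``self-contained alternative'' (induction from \eqref{eq:3p1}--\eqref{eq:3p11}) is essentially the route of \cite{altCE}, and you correctly locate the difficulty --- the antisymmetry relations \eqref{eq:3p5}--\eqref{eq:3p11} give only commutator symmetry and must be interleaved with the $q$-commutator relations \eqref{eq:3p2}, \eqref{eq:3p3} --- but the induction is not carried out, so that branch remains a plan rather than a proof. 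The symmetries $\sigma$, $\dagger$ of Lemmas \ref{lem:autAc}, \ref{lem:antiautAc} do cut the work roughly in half as you say, though note that $\dagger$ reverses products, so its image of the rule for $\mathcal G(s)\mathcal W^-(t)$ is a rule for $\mathcal W^-(t)\mathcal {\tilde G}(s)$ and must still be solved back into the stated form.
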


\noindent Next we give the nontrivial reduction rules for the PBW basis in Lemma \ref{lem:pbwP}.

 \begin{lemma} \label{lem:rr1} For the algebra $\mathcal U^+_q$ the following hold for $i,j \in \mathbb N$:
 \begin{align*}
 \mathcal W_{i+1} \mathcal W_{-j} &=\mathcal W_{-j} \mathcal W_{i+1} 
 + q^{-1}(q-q^{-1})\sum_{\ell=0}^{{\rm min}(i,j)} \bigl( \mathcal G_{i+j+1-\ell} \mathcal {\tilde G}_\ell - \mathcal G_\ell \mathcal {\tilde G}_{i+j+1-\ell} \bigr),
 \\
 \mathcal {\tilde G}_{i+1} \mathcal G_{j+1} &= \mathcal G_{j+1} \mathcal {\tilde G}_{i+1} +
 q(q-q^{-1}) \sum_{\ell=0}^{{\rm min}(i,j)} \bigl( 
 \mathcal W_{\ell-i-j-1} \mathcal W_{\ell+1} - \mathcal W_{-\ell} \mathcal W_{i+j+2-\ell} \bigr)
 \end{align*}
 \noindent and
 \begin{align*}
 \mathcal G_{i+1} \mathcal W_{-j} &= \mathcal W_{-j} \mathcal G_{i+1} +
 q(q-q^{-1}) \sum_{\ell=0}^{{\min}(i,j)} \bigl( 
 \mathcal W_{-\ell} \mathcal G_{i+j+1-\ell} - \mathcal W_{\ell-i-j-1} \mathcal G_\ell\bigr),
 \\
 \mathcal W_{i+1} \mathcal G_{j+1} &= \mathcal G_{j+1} \mathcal W_{i+1} 
 + 
 q(q-q^{-1}) \sum_{\ell=0}^{{\rm min}(i,j)} \bigl( \mathcal G_{i+j+1-\ell} \mathcal W_{\ell+1} - \mathcal G_\ell \mathcal W_{i+j+2-\ell}\bigr),
 \\
  \mathcal {\tilde G}_{i+1} \mathcal W_{-j} &= \mathcal W_{-j} \mathcal {\tilde G}_{i+1} +
 q^{-1}(q-q^{-1}) \sum_{\ell=0}^{{\min}(i,j)} \bigl( 
 \mathcal W_{\ell-i-j-1} \mathcal {\tilde G}_{\ell} - \mathcal W_{-\ell} \mathcal {\tilde G}_{i+j+1-\ell}\bigr),
 \\
 \mathcal W_{i+1} \mathcal {\tilde G}_{j+1} &= \mathcal {\tilde G}_{j+1} \mathcal W_{i+1} 
 + 
 q^{-1}(q-q^{-1}) \sum_{\ell=0}^{{\rm min}(i,j)} \bigl( \mathcal {\tilde G}_{\ell} \mathcal W_{i+j+2-\ell} - \mathcal {\tilde G}_{i+j+1-\ell} \mathcal W_{\ell+1}\bigr).
 \end{align*}
 
 \end{lemma}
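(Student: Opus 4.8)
The plan is to obtain each of the six relations by extracting the coefficient of a single monomial $s^it^j$ from the corresponding generating-function reduction rule in Lemma \ref{lem:redrel2A}; the six relations displayed here are in bijection, in order, with the six equations there. The engine of the argument is one formal identity: for any generating function $g(s,t)=\sum_{a,b\in\mathbb N}c_{a,b}\,s^at^b$ with coefficients $c_{a,b}$ in $\mathcal U^+_q$, the difference $g(s,t)-g(t,s)$ is divisible by $s-t$, and
\begin{align*}
\frac{g(s,t)-g(t,s)}{s-t}=\sum_{i,j\in\mathbb N}\Bigl(\sum_{\ell=0}^{\min(i,j)}\bigl(c_{\,i+j+1-\ell,\,\ell}-c_{\,\ell,\,i+j+1-\ell}\bigr)\Bigr)s^it^j.
\end{align*}
I would prove this from the factorization $s^at^b-s^bt^a=(s-t)\sum_{r=0}^{a-b-1}s^{a-1-r}t^{b+r}$ valid for $a>b$ (and its negative for $a<b$): dividing by $s-t$ and collecting the coefficient of $s^it^j$, the admissible range of $r$ is exactly what produces the bound $0\le\ell\le\min(i,j)$. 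This single identity is the source of the $\min(i,j)$ occurring in every relation of the statement.

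With this in hand the first two relations are immediate. Moving the term $\mathcal W^-(t)\mathcal W^+(s)$ to the left in the first equation of Lemma \ref{lem:redrel2A} puts it in the shape $\mathcal W^+(s)\mathcal W^-(t)-\mathcal W^-(t)\mathcal W^+(s)=(1-q^{-2})(s-t)^{-1}(g(s,t)-g(t,s))$ with $g(s,t)=\mathcal G(s)\mathcal{\tilde G}(t)$, so the identity above with $c_{a,b}=\mathcal G_a\mathcal{\tilde G}_b$ gives the first relation once one notes $1-q^{-2}=q^{-1}(q-q^{-1})$. The second equation is the same with $g(s,t)=\mathcal W^-(t)\mathcal W^+(s)$ and an extra factor $st$; the $st$ merely advances the target monomial to $s^{i+1}t^{j+1}$, and $1-q^{2}=-q(q-q^{-1})$ adjusts the sign, yielding the second relation.

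The remaining four equations carry the rational prefactors $\tfrac{qs-q^{-1}t}{s-t}$ and $\tfrac{q^{-1}s-qt}{s-t}$, and the only substantive step is to bring them into the same antisymmetric-quotient form. Taking the third equation as representative, I would subtract $\mathcal W^-(t)\mathcal G(s)$ from each side and apply the cancellation $q(qs-q^{-1}t)-(s-t)=(q^2-1)s$ together with $q(q-q^{-1})=q^2-1$; the equation collapses to $\mathcal G(s)\mathcal W^-(t)-\mathcal W^-(t)\mathcal G(s)=q(q-q^{-1})\,s\,(s-t)^{-1}(g(s,t)-g(t,s))$ with $g(s,t)=\mathcal W^-(t)\mathcal G(s)$, i.e.\ $c_{a,b}=\mathcal W_{-b}\mathcal G_a$. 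Extracting the coefficient of $s^{i+1}t^j$ (the leading $s$ accounts for the shift) reproduces the third relation verbatim, the entry $\mathcal W_{\ell-i-j-1}$ arising because $c_{\ell,i+j+1-\ell}$ carries the index $-(i+j+1-\ell)=\ell-i-j-1$. The fourth, fifth and sixth equations follow by the identical two-step recipe, using the analogous cancellations $q(q^{-1}s-qt)-(s-t)=(1-q^2)t$, $q^{-1}(q^{-1}s-qt)-(s-t)=(q^{-2}-1)s$ and $q^{-1}(qs-q^{-1}t)-(s-t)=(1-q^{-2})t$. I do not anticipate any conceptual obstacle; the work is entirely bookkeeping—tracking the index shift induced by the monomial prefactor, preserving the order of the noncommuting factors inside $g$, and checking the elementary scalar identities. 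As a labor-saving alternative one could note that the automorphism $\sigma$ of Lemma \ref{lem:autAc} pairs the third relation with the sixth and the fourth with the fifth, but since the extraction is uniform across all six there is little to gain.
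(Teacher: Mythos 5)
Your proposal is correct and follows exactly the paper's route: the paper's proof of Lemma \ref{lem:rr1} is the single sentence that the relations are obtained by unpacking the equations in Lemma \ref{lem:redrel2A}, and your coefficient-extraction identity (with the $\min(i,j)$ bound coming from the telescoping factorization of $s^at^b-s^bt^a$) together with the scalar cancellations is precisely the bookkeeping that unpacking entails. I verified the index shifts and the sign/scalar conversions in all six cases; they check out.
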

 \begin{proof} These relations are obtained by unpacking the equations in Lemma \ref{lem:redrel2A}.
 \end{proof}



%

\bigskip

\noindent Paul Terwilliger \hfil\break
\noindent Department of Mathematics \hfil\break
\noindent University of Wisconsin \hfil\break
\noindent 480 Lincoln Drive \hfil\break
\noindent Madison, WI 53706-1388 USA \hfil\break
\noindent email: {\tt terwilli@math.wisc.edu }\hfil\break

\end{document}